\definecolor{darkgreen}{rgb}{0.,0.5,0.}
\newcommand{\la}{\lambda}
\numberwithin{equation}{section} \overfullrule 5pt
\newtheorem{thm}{Theorem}[section]
\newtheorem{lem}[thm]{Lemma}
\theoremstyle{definition}
\newtheorem{defi}{Definition}[section]
\newcommand{\PP}[1]{\text{\rm #1}}
\title[Doubled shifted plane partitions]{%
Skew doubled shifted plane partitions: calculus and asymptotics}
\author{Guo-Niu HAN and Huan XIONG}
\address{Universit\'e de Strasbourg, CNRS, IRMA UMR 7501, F-67000 Strasbourg, France}
\email{guoniu.han@unistra.fr, \quad xiong@math.unistra.fr}
\subjclass[2010]{05A16, 05A17, 05E05}
\keywords{plane partition, cylindric partition, Schur process, 
asymptotic formula}
\begin{document}
\begin{abstract} 
Plane partitions have been widely studied in Mathematics since MacMahon. See, for example, the works by Andrews, Macdonald, Stanley, Sagan and Krattenthaler. The Schur process approach, introduced by Okounkov and Reshetikhin, and further developed by Borodin, Corwin,  Corteel, Savelief and Vuleti\'c, has been proved to be a powerful tool in the study of various kinds of plane partitions. The exact enumerations of ordinary plane partitions, shifted plane partitions and cylindric partitions could be derived from two summation formulas for Schur processes, namely, the {\it open summation formula} and {\it the cylindric summation formula}.	

In this paper, we establish a new summation formula for Schur processes, called the {\it complete summation formula}. As an application, we obtain the generating function and the asymptotic formula for the number of {\it doubled shifted plane partitions}, which can be viewed as plane partitions ``shifted at the two sides''. We prove that the order of the asymptotic formula depends only on the diagonal width of the doubled shifted plane partition, not on the profile (the skew zone) itself. By using the same methods, the generating function and the asymptotic formula for the number of {\it symmetric cylindric partitions} are also derived.
\end{abstract}

\maketitle

\section{Introduction} 


\medskip

An {\it ordinary plane partition} (resp. A {\it defective plane partition}) 
is a filling $\omega=(\omega_{i,j})$ of
the quarter plane $\Lambda=\{(i,j)\mid i,j\geq 1\}$ (resp. of a 
connected area of the quarter plane $\Lambda$) with
nonnegative integers
such that rows and columns decrease weakly, and
the size $|\omega|=\sum \omega_{i,j}$ is finite.
The enumeration of
various defective plane partitions 
have been widely studied  (see \cite{Andrews1978PP1,GesselKratt1997,GordonHouten1968PP12,MillsRR1982,MillsRR1983,MillsRR1986,MillsRR1987,Stanley1971,Stanley1973,Stanley1986SPP,Stanely1987Err,Stanley1989}). In particular,
the generating functions for the following five types of defective plane partitions (see Fig. 1) have been obtained since MacMahon:

(A) the ordinary plane partitions (MacMahon \cite{MacMahon1899,MacMahon1916}, Stanley \cite{Stanley1971});

(B) the skew plane partitions (Sagan \cite{Sagan1993});

(C) the skew shifted plane partitions  (Sagan \cite{Sagan1993});

(D) the symmetric plane partitions  (Andrews \cite{Andrews1978PP1}, Macdonald \cite{Macdonald1995});

(E) the cylindric partitions (Gessel and Krattenthaler \cite{GesselKratt1997},  Borodin  \cite{Borodin2007}).

\smallskip

In the literature there are several approaches to deriving the generating functions for various defective plane partitions, such as: 
(1)~Determinant evaluation and nonintersecting lattice paths  \cite{Andrews1977,Andrews1978PP1,Andrews1979PP3,Andrews1994PP5,AndrewsPS2005PP6,GesselKratt1997,Kratt1990};
(2)~Hook lengths and combinatorial proofs \cite{ Sagan1982,Sagan1993,Stanley1973};
(3)~Schur functions and Schur processes \cite{betea2017free, Okoun2006,Panova2012,Stanley1971}.
(4)~Lozenge tilings and Kuo condensation \cite{ciucu2002enumeration, CiucuKratt2013}.

$$
\begin{tikzpicture}[scale=0.6]
\begin{scope}[xshift=0, yshift=0]
\fill [gray!30](0.0000,0.2000)--(0.0000,0.4000)--(0.0000,0.6000)--(0.0000,0.8000)--(0.0000,1.0000)--(0.0000,1.2000)--(0.0000,1.4000)--(0.0000,1.6000)--(0.0000,1.8000)--(0.0000,2.0000)--(0.0000,2.2000)--(0.0000,2.4000)--(0.0000,2.6000)--(0.0000,2.8000)--(0.0000,3.0000)--(0.0000,3.2000)--(0.0000,3.4000)--(0.2000,3.4000)--(0.4000,3.4000)--(0.6000,3.4000)--(0.8000,3.4000)--(1.0000,3.4000)--(1.2000,3.4000)--(1.4000,3.4000)--(1.6000,3.4000)--(1.8000,3.4000)--(2.0000,3.4000)--(2.2000,3.4000)--(2.4000,3.4000)--(2.6000,3.4000)--(2.8000,3.4000)--(3.0000,3.4000)--(3.2000,3.4000)--(3.4000,3.4000)--(3.6000,3.4000)--(3.8000,3.4000)--(4.0000,3.4000)--(4.0000,3.2000)--(4.0000,3.0000)--(4.0000,2.8000)--(4.0000,2.6000)--(4.0000,2.4000)--(4.0000,2.2000)--(4.0000,2.0000)--(4.0000,1.8000)--(4.0000,1.6000)--(4.0000,1.4000)--(4.0000,1.2000)--(4.0000,1.0000)--(4.0000,0.8000)--(4.0000,0.6000)--(4.0000,0.4000)--(4.0000,0.2000)--(3.8000,0.2000)--(3.6000,0.2000)--(3.4000,0.2000)--(3.2000,0.2000)--(3.0000,0.2000)--(2.8000,0.2000)--(2.6000,0.2000)--(2.4000,0.2000)--(2.2000,0.2000)--(2.0000,0.2000)--(1.8000,0.2000)--(1.6000,0.2000)--(1.4000,0.2000)--(1.2000,0.2000)--(1.0000,0.2000)--(0.8000,0.2000)--(0.6000,0.2000)--(0.4000,0.2000)--(0.2000,0.2000)--(0.0000,0.2000);
\draw [gray!10](0.0000,0.2000)--(0.0000,3.4000);
\draw [gray!10](0.2000,0.2000)--(0.2000,3.4000);
\draw [gray!10](0.4000,0.2000)--(0.4000,3.4000);
\draw [gray!10](0.6000,0.2000)--(0.6000,3.4000);
\draw [gray!10](0.8000,0.2000)--(0.8000,3.4000);
\draw [gray!10](1.0000,0.2000)--(1.0000,3.4000);
\draw [gray!10](1.2000,0.2000)--(1.2000,3.4000);
\draw [gray!10](1.4000,0.2000)--(1.4000,3.4000);
\draw [gray!10](1.6000,0.2000)--(1.6000,3.4000);
\draw [gray!10](1.8000,0.2000)--(1.8000,3.4000);
\draw [gray!10](2.0000,0.2000)--(2.0000,3.4000);
\draw [gray!10](2.2000,0.2000)--(2.2000,3.4000);
\draw [gray!10](2.4000,0.2000)--(2.4000,3.4000);
\draw [gray!10](2.6000,0.2000)--(2.6000,3.4000);
\draw [gray!10](2.8000,0.2000)--(2.8000,3.4000);
\draw [gray!10](3.0000,0.2000)--(3.0000,3.4000);
\draw [gray!10](3.2000,0.2000)--(3.2000,3.4000);
\draw [gray!10](3.4000,0.2000)--(3.4000,3.4000);
\draw [gray!10](3.6000,0.2000)--(3.6000,3.4000);
\draw [gray!10](3.8000,0.2000)--(3.8000,3.4000);
\draw [gray!10](0.0000,0.2000)--(4.0000,0.2000);
\draw [gray!10](0.0000,0.4000)--(4.0000,0.4000);
\draw [gray!10](0.0000,0.6000)--(4.0000,0.6000);
\draw [gray!10](0.0000,0.8000)--(4.0000,0.8000);
\draw [gray!10](0.0000,1.0000)--(4.0000,1.0000);
\draw [gray!10](0.0000,1.2000)--(4.0000,1.2000);
\draw [gray!10](0.0000,1.4000)--(4.0000,1.4000);
\draw [gray!10](0.0000,1.6000)--(4.0000,1.6000);
\draw [gray!10](0.0000,1.8000)--(4.0000,1.8000);
\draw [gray!10](0.0000,2.0000)--(4.0000,2.0000);
\draw [gray!10](0.0000,2.2000)--(4.0000,2.2000);
\draw [gray!10](0.0000,2.4000)--(4.0000,2.4000);
\draw [gray!10](0.0000,2.6000)--(4.0000,2.6000);
\draw [gray!10](0.0000,2.8000)--(4.0000,2.8000);
\draw [gray!10](0.0000,3.0000)--(4.0000,3.0000);
\draw [gray!10](0.0000,3.2000)--(4.0000,3.2000);
\draw [black](0.0000,0.2000)--(0.0000,0.4000)--(0.0000,0.6000)--(0.0000,0.8000)--(0.0000,1.0000)--(0.0000,1.2000)--(0.0000,1.4000)--(0.0000,1.6000)--(0.0000,1.8000)--(0.0000,2.0000)--(0.0000,2.2000)--(0.0000,2.4000)--(0.0000,2.6000)--(0.0000,2.8000)--(0.0000,3.0000)--(0.0000,3.2000)--(0.0000,3.4000)--(0.2000,3.4000)--(0.4000,3.4000)--(0.6000,3.4000)--(0.8000,3.4000)--(1.0000,3.4000)--(1.2000,3.4000)--(1.4000,3.4000)--(1.6000,3.4000)--(1.8000,3.4000)--(2.0000,3.4000)--(2.2000,3.4000)--(2.4000,3.4000)--(2.6000,3.4000)--(2.8000,3.4000)--(3.0000,3.4000)--(3.2000,3.4000)--(3.4000,3.4000)--(3.6000,3.4000)--(3.8000,3.4000)--(4.0000,3.4000);
\draw (0.0000,3.4000)--(0.0000,-0.2000);
\draw (0.0000,3.4000)--(4.4000,3.4000);
\draw (2,-1) node [] {A: Ordinary PP};
\end{scope}
\begin{scope}[xshift=160, yshift=0]
\fill [gray!30](0.0000,0.2000)--(0.0000,0.4000)--(0.0000,0.6000)--(0.0000,0.8000)--(0.0000,1.0000)--(0.0000,1.2000)--(0.0000,1.4000)--(0.0000,1.6000)--(0.0000,1.8000)--(0.0000,2.0000)--(0.0000,2.2000)--(0.0000,2.4000)--(0.0000,2.6000)--(0.0000,2.8000)--(0.2000,2.8000)--(0.4000,2.8000)--(0.4000,3.0000)--(0.6000,3.0000)--(0.8000,3.0000)--(1.0000,3.0000)--(1.0000,3.2000)--(1.2000,3.2000)--(1.2000,3.4000)--(1.4000,3.4000)--(1.6000,3.4000)--(1.8000,3.4000)--(2.0000,3.4000)--(2.2000,3.4000)--(2.4000,3.4000)--(2.6000,3.4000)--(2.8000,3.4000)--(3.0000,3.4000)--(3.2000,3.4000)--(3.4000,3.4000)--(3.6000,3.4000)--(3.8000,3.4000)--(3.8000,3.2000)--(3.8000,3.0000)--(3.8000,2.8000)--(3.8000,2.6000)--(3.8000,2.4000)--(3.8000,2.2000)--(3.8000,2.0000)--(3.8000,1.8000)--(3.8000,1.6000)--(3.8000,1.4000)--(3.8000,1.2000)--(3.8000,1.0000)--(3.8000,0.8000)--(3.8000,0.6000)--(3.8000,0.4000)--(3.8000,0.2000)--(3.6000,0.2000)--(3.4000,0.2000)--(3.2000,0.2000)--(3.0000,0.2000)--(2.8000,0.2000)--(2.6000,0.2000)--(2.4000,0.2000)--(2.2000,0.2000)--(2.0000,0.2000)--(1.8000,0.2000)--(1.6000,0.2000)--(1.4000,0.2000)--(1.2000,0.2000)--(1.0000,0.2000)--(0.8000,0.2000)--(0.6000,0.2000)--(0.4000,0.2000)--(0.2000,0.2000);
\draw [gray!10](0.0000,0.2000)--(0.0000,2.8000);
\draw [gray!10](0.2000,0.2000)--(0.2000,2.8000);
\draw [gray!10](0.4000,0.2000)--(0.4000,3.0000);
\draw [gray!10](0.6000,0.2000)--(0.6000,3.0000);
\draw [gray!10](0.8000,0.2000)--(0.8000,3.0000);
\draw [gray!10](1.0000,0.2000)--(1.0000,3.2000);
\draw [gray!10](1.2000,0.2000)--(1.2000,3.4000);
\draw [gray!10](1.4000,0.2000)--(1.4000,3.4000);
\draw [gray!10](1.6000,0.2000)--(1.6000,3.4000);
\draw [gray!10](1.8000,0.2000)--(1.8000,3.4000);
\draw [gray!10](2.0000,0.2000)--(2.0000,3.4000);
\draw [gray!10](2.2000,0.2000)--(2.2000,3.4000);
\draw [gray!10](2.4000,0.2000)--(2.4000,3.4000);
\draw [gray!10](2.6000,0.2000)--(2.6000,3.4000);
\draw [gray!10](2.8000,0.2000)--(2.8000,3.4000);
\draw [gray!10](3.0000,0.2000)--(3.0000,3.4000);
\draw [gray!10](3.2000,0.2000)--(3.2000,3.4000);
\draw [gray!10](3.4000,0.2000)--(3.4000,3.4000);
\draw [gray!10](3.6000,0.2000)--(3.6000,3.4000);
\draw [gray!10](0.0000,0.2000)--(3.8000,0.2000);
\draw [gray!10](0.0000,0.4000)--(3.8000,0.4000);
\draw [gray!10](0.0000,0.6000)--(3.8000,0.6000);
\draw [gray!10](0.0000,0.8000)--(3.8000,0.8000);
\draw [gray!10](0.0000,1.0000)--(3.8000,1.0000);
\draw [gray!10](0.0000,1.2000)--(3.8000,1.2000);
\draw [gray!10](0.0000,1.4000)--(3.8000,1.4000);
\draw [gray!10](0.0000,1.6000)--(3.8000,1.6000);
\draw [gray!10](0.0000,1.8000)--(3.8000,1.8000);
\draw [gray!10](0.0000,2.0000)--(3.8000,2.0000);
\draw [gray!10](0.0000,2.2000)--(3.8000,2.2000);
\draw [gray!10](0.0000,2.4000)--(3.8000,2.4000);
\draw [gray!10](0.0000,2.6000)--(3.8000,2.6000);
\draw [gray!10](0.0000,2.8000)--(3.8000,2.8000);
\draw [gray!10](0.4000,3.0000)--(3.8000,3.0000);
\draw [gray!10](1.0000,3.2000)--(3.8000,3.2000);
\draw [black](0.0000,0.2000)--(0.0000,0.4000)--(0.0000,0.6000)--(0.0000,0.8000)--(0.0000,1.0000)--(0.0000,1.2000)--(0.0000,1.4000)--(0.0000,1.6000)--(0.0000,1.8000)--(0.0000,2.0000)--(0.0000,2.2000)--(0.0000,2.4000)--(0.0000,2.6000)--(0.0000,2.8000)--(0.2000,2.8000)--(0.4000,2.8000)--(0.4000,3.0000)--(0.6000,3.0000)--(0.8000,3.0000)--(1.0000,3.0000)--(1.0000,3.2000)--(1.2000,3.2000)--(1.2000,3.4000)--(1.4000,3.4000)--(1.6000,3.4000)--(1.8000,3.4000)--(2.0000,3.4000)--(2.2000,3.4000)--(2.4000,3.4000)--(2.6000,3.4000)--(2.8000,3.4000)--(3.0000,3.4000)--(3.2000,3.4000)--(3.4000,3.4000)--(3.6000,3.4000)--(3.8000,3.4000);
\draw (0.0000,3.4000)--(0.0000,-0.2000);
\draw (0.0000,3.4000)--(4.2000,3.4000);
\draw (2,-1) node [] {B: Skew PP};
\end{scope}
\begin{scope}[xshift=320, yshift=0]
\fill [gray!30](1.8000,0.2000)--(1.6000,0.2000)--(1.6000,0.4000)--(1.4000,0.4000)--(1.4000,0.6000)--(1.2000,0.6000)--(1.2000,0.8000)--(1.0000,0.8000)--(1.0000,1.0000)--(0.8000,1.0000)--(0.8000,1.2000)--(0.6000,1.2000)--(0.6000,1.4000)--(0.4000,1.4000)--(0.4000,1.6000)--(0.2000,1.6000)--(0.2000,1.8000)--(0.0000,1.8000)--(0.0000,2.0000)--(0.0000,2.2000)--(0.0000,2.4000)--(0.0000,2.6000)--(0.0000,2.8000)--(0.2000,2.8000)--(0.4000,2.8000)--(0.4000,3.0000)--(0.6000,3.0000)--(0.8000,3.0000)--(1.0000,3.0000)--(1.0000,3.2000)--(1.2000,3.2000)--(1.2000,3.4000)--(1.4000,3.4000)--(1.6000,3.4000)--(1.8000,3.4000)--(2.0000,3.4000)--(2.2000,3.4000)--(2.4000,3.4000)--(2.6000,3.4000)--(2.8000,3.4000)--(3.0000,3.4000)--(3.2000,3.4000)--(3.4000,3.4000)--(3.6000,3.4000)--(3.8000,3.4000)--(4.0000,3.4000)--(4.0000,3.2000)--(4.0000,3.0000)--(4.0000,2.8000)--(4.0000,2.6000)--(4.0000,2.4000)--(4.0000,2.2000)--(4.0000,2.0000)--(4.0000,1.8000)--(4.0000,1.6000)--(4.0000,1.4000)--(4.0000,1.2000)--(4.0000,1.0000)--(4.0000,0.8000)--(4.0000,0.6000)--(4.0000,0.4000)--(4.0000,0.2000)--(3.8000,0.2000)--(3.6000,0.2000)--(3.4000,0.2000)--(3.2000,0.2000)--(3.0000,0.2000)--(2.8000,0.2000)--(2.6000,0.2000)--(2.4000,0.2000)--(2.2000,0.2000)--(2.0000,0.2000)--(1.8000,0.2000);
\draw [gray!10](0.0000,1.8000)--(0.0000,2.8000);
\draw [gray!10](0.2000,1.6000)--(0.2000,2.8000);
\draw [gray!10](0.4000,1.4000)--(0.4000,3.0000);
\draw [gray!10](0.6000,1.2000)--(0.6000,3.0000);
\draw [gray!10](0.8000,1.0000)--(0.8000,3.0000);
\draw [gray!10](1.0000,0.8000)--(1.0000,3.2000);
\draw [gray!10](1.2000,0.6000)--(1.2000,3.4000);
\draw [gray!10](1.4000,0.4000)--(1.4000,3.4000);
\draw [gray!10](1.6000,0.2000)--(1.6000,3.4000);
\draw [gray!10](1.8000,0.2000)--(1.8000,3.4000);
\draw [gray!10](2.0000,0.2000)--(2.0000,3.4000);
\draw [gray!10](2.2000,0.2000)--(2.2000,3.4000);
\draw [gray!10](2.4000,0.2000)--(2.4000,3.4000);
\draw [gray!10](2.6000,0.2000)--(2.6000,3.4000);
\draw [gray!10](2.8000,0.2000)--(2.8000,3.4000);
\draw [gray!10](3.0000,0.2000)--(3.0000,3.4000);
\draw [gray!10](3.2000,0.2000)--(3.2000,3.4000);
\draw [gray!10](3.4000,0.2000)--(3.4000,3.4000);
\draw [gray!10](3.6000,0.2000)--(3.6000,3.4000);
\draw [gray!10](3.8000,0.2000)--(3.8000,3.4000);
\draw [gray!10](1.6000,0.2000)--(4.0000,0.2000);
\draw [gray!10](1.4000,0.4000)--(4.0000,0.4000);
\draw [gray!10](1.2000,0.6000)--(4.0000,0.6000);
\draw [gray!10](1.0000,0.8000)--(4.0000,0.8000);
\draw [gray!10](0.8000,1.0000)--(4.0000,1.0000);
\draw [gray!10](0.6000,1.2000)--(4.0000,1.2000);
\draw [gray!10](0.4000,1.4000)--(4.0000,1.4000);
\draw [gray!10](0.2000,1.6000)--(4.0000,1.6000);
\draw [gray!10](0.0000,1.8000)--(4.0000,1.8000);
\draw [gray!10](0.0000,2.0000)--(4.0000,2.0000);
\draw [gray!10](0.0000,2.2000)--(4.0000,2.2000);
\draw [gray!10](0.0000,2.4000)--(4.0000,2.4000);
\draw [gray!10](0.0000,2.6000)--(4.0000,2.6000);
\draw [gray!10](0.0000,2.8000)--(4.0000,2.8000);
\draw [gray!10](0.4000,3.0000)--(4.0000,3.0000);
\draw [gray!10](1.0000,3.2000)--(4.0000,3.2000);
\draw [black](1.8000,0.2000)--(1.6000,0.2000)--(1.6000,0.4000)--(1.4000,0.4000)--(1.4000,0.6000)--(1.2000,0.6000)--(1.2000,0.8000)--(1.0000,0.8000)--(1.0000,1.0000)--(0.8000,1.0000)--(0.8000,1.2000)--(0.6000,1.2000)--(0.6000,1.4000)--(0.4000,1.4000)--(0.4000,1.6000)--(0.2000,1.6000)--(0.2000,1.8000)--(0.0000,1.8000)--(0.0000,2.0000)--(0.0000,2.2000)--(0.0000,2.4000)--(0.0000,2.6000)--(0.0000,2.8000)--(0.2000,2.8000)--(0.4000,2.8000)--(0.4000,3.0000)--(0.6000,3.0000)--(0.8000,3.0000)--(1.0000,3.0000)--(1.0000,3.2000)--(1.2000,3.2000)--(1.2000,3.4000)--(1.4000,3.4000)--(1.6000,3.4000)--(1.8000,3.4000)--(2.0000,3.4000)--(2.2000,3.4000)--(2.4000,3.4000)--(2.6000,3.4000)--(2.8000,3.4000)--(3.0000,3.4000)--(3.2000,3.4000)--(3.4000,3.4000)--(3.6000,3.4000)--(3.8000,3.4000)--(4.0000,3.4000);
\draw (0.0000,3.4000)--(0.0000,-0.2000);
\draw (0.0000,3.4000)--(4.4000,3.4000);
\draw (2,-1) node [] {C: Skew Shifted PP};
\end{scope}
\end{tikzpicture}
$$

$$
\begin{tikzpicture}[scale=0.6]
\begin{scope}[xshift=0, yshift=0]
\fill [gray!30](0.0000,0.2000)--(0.0000,0.4000)--(0.0000,0.6000)--(0.0000,0.8000)--(0.0000,1.0000)--(0.0000,1.2000)--(0.0000,1.4000)--(0.0000,1.6000)--(0.0000,1.8000)--(0.0000,2.0000)--(0.0000,2.2000)--(0.0000,2.4000)--(0.0000,2.6000)--(0.0000,2.8000)--(0.0000,3.0000)--(0.0000,3.2000)--(0.0000,3.4000)--(0.2000,3.4000)--(0.4000,3.4000)--(0.6000,3.4000)--(0.8000,3.4000)--(1.0000,3.4000)--(1.2000,3.4000)--(1.4000,3.4000)--(1.6000,3.4000)--(1.8000,3.4000)--(2.0000,3.4000)--(2.2000,3.4000)--(2.4000,3.4000)--(2.6000,3.4000)--(2.8000,3.4000)--(3.0000,3.4000)--(3.2000,3.4000)--(3.2000,3.2000)--(3.2000,3.0000)--(3.2000,2.8000)--(3.2000,2.6000)--(3.2000,2.4000)--(3.2000,2.2000)--(3.2000,2.0000)--(3.2000,1.8000)--(3.2000,1.6000)--(3.2000,1.4000)--(3.2000,1.2000)--(3.2000,1.0000)--(3.2000,0.8000)--(3.2000,0.6000)--(3.2000,0.4000)--(3.2000,0.2000)--(3.0000,0.2000)--(2.8000,0.2000)--(2.6000,0.2000)--(2.4000,0.2000)--(2.2000,0.2000)--(2.0000,0.2000)--(1.8000,0.2000)--(1.6000,0.2000)--(1.4000,0.2000)--(1.2000,0.2000)--(1.0000,0.2000)--(0.8000,0.2000)--(0.6000,0.2000)--(0.4000,0.2000)--(0.2000,0.2000)--(0.0000,0.2000);
\draw [gray!10](0.0000,0.2000)--(0.0000,3.4000);
\draw [gray!10](0.2000,0.2000)--(0.2000,3.4000);
\draw [gray!10](0.4000,0.2000)--(0.4000,3.4000);
\draw [gray!10](0.6000,0.2000)--(0.6000,3.4000);
\draw [gray!10](0.8000,0.2000)--(0.8000,3.4000);
\draw [gray!10](1.0000,0.2000)--(1.0000,3.4000);
\draw [gray!10](1.2000,0.2000)--(1.2000,3.4000);
\draw [gray!10](1.4000,0.2000)--(1.4000,3.4000);
\draw [gray!10](1.6000,0.2000)--(1.6000,3.4000);
\draw [gray!10](1.8000,0.2000)--(1.8000,3.4000);
\draw [gray!10](2.0000,0.2000)--(2.0000,3.4000);
\draw [gray!10](2.2000,0.2000)--(2.2000,3.4000);
\draw [gray!10](2.4000,0.2000)--(2.4000,3.4000);
\draw [gray!10](2.6000,0.2000)--(2.6000,3.4000);
\draw [gray!10](2.8000,0.2000)--(2.8000,3.4000);
\draw [gray!10](3.0000,0.2000)--(3.0000,3.4000);
\draw [gray!10](0.0000,0.2000)--(3.2000,0.2000);
\draw [gray!10](0.0000,0.4000)--(3.2000,0.4000);
\draw [gray!10](0.0000,0.6000)--(3.2000,0.6000);
\draw [gray!10](0.0000,0.8000)--(3.2000,0.8000);
\draw [gray!10](0.0000,1.0000)--(3.2000,1.0000);
\draw [gray!10](0.0000,1.2000)--(3.2000,1.2000);
\draw [gray!10](0.0000,1.4000)--(3.2000,1.4000);
\draw [gray!10](0.0000,1.6000)--(3.2000,1.6000);
\draw [gray!10](0.0000,1.8000)--(3.2000,1.8000);
\draw [gray!10](0.0000,2.0000)--(3.2000,2.0000);
\draw [gray!10](0.0000,2.2000)--(3.2000,2.2000);
\draw [gray!10](0.0000,2.4000)--(3.2000,2.4000);
\draw [gray!10](0.0000,2.6000)--(3.2000,2.6000);
\draw [gray!10](0.0000,2.8000)--(3.2000,2.8000);
\draw [gray!10](0.0000,3.0000)--(3.2000,3.0000);
\draw [gray!10](0.0000,3.2000)--(3.2000,3.2000);
\draw [black](0.0000,0.2000)--(0.0000,0.4000)--(0.0000,0.6000)--(0.0000,0.8000)--(0.0000,1.0000)--(0.0000,1.2000)--(0.0000,1.4000)--(0.0000,1.6000)--(0.0000,1.8000)--(0.0000,2.0000)--(0.0000,2.2000)--(0.0000,2.4000)--(0.0000,2.6000)--(0.0000,2.8000)--(0.0000,3.0000)--(0.0000,3.2000)--(0.0000,3.4000)--(0.2000,3.4000)--(0.4000,3.4000)--(0.6000,3.4000)--(0.8000,3.4000)--(1.0000,3.4000)--(1.2000,3.4000)--(1.4000,3.4000)--(1.6000,3.4000)--(1.8000,3.4000)--(2.0000,3.4000)--(2.2000,3.4000)--(2.4000,3.4000)--(2.6000,3.4000)--(2.8000,3.4000)--(3.0000,3.4000)--(3.2000,3.4000);
\draw (0.0000,3.4000)--(0.0000,-0.2000);
\draw (0.0000,3.4000)--(3.6000,3.4000);
\draw [dashed] (0.0000,3.4000)--(3.4000,0.0000);
\draw (2,-1) node [] {D: Sym PP};
\end{scope}
\begin{scope}[xshift=160, yshift=0]
\fill [gray!30](1.6000,0.0000)--(1.4000,0.0000)--(1.4000,0.2000)--(1.2000,0.2000)--(1.2000,0.4000)--(1.0000,0.4000)--(1.0000,0.6000)--(0.8000,0.6000)--(0.8000,0.8000)--(0.6000,0.8000)--(0.6000,1.0000)--(0.4000,1.0000)--(0.4000,1.2000)--(0.2000,1.2000)--(0.2000,1.4000)--(0.0000,1.4000)--(0.0000,1.6000)--(0.0000,1.8000)--(0.0000,2.0000)--(0.0000,2.2000)--(0.0000,2.4000)--(0.2000,2.4000)--(0.2000,2.6000)--(0.4000,2.6000)--(0.4000,2.8000)--(0.6000,2.8000)--(0.8000,2.8000)--(0.8000,3.0000)--(0.8000,3.2000)--(1.0000,3.2000)--(1.2000,3.2000)--(1.4000,3.2000)--(1.6000,3.2000)--(1.6000,3.4000)--(1.8000,3.4000)--(2.0000,3.4000)--(2.0000,3.2000)--(2.2000,3.2000)--(2.2000,3.0000)--(2.4000,3.0000)--(2.4000,2.8000)--(2.6000,2.8000)--(2.6000,2.6000)--(2.8000,2.6000)--(2.8000,2.4000)--(3.0000,2.4000)--(3.0000,2.2000)--(3.2000,2.2000)--(3.2000,2.0000)--(3.4000,2.0000)--(3.4000,1.8000)--(3.2000,1.8000)--(3.2000,1.6000)--(3.0000,1.6000)--(3.0000,1.4000)--(2.8000,1.4000)--(2.8000,1.2000)--(2.6000,1.2000)--(2.6000,1.0000)--(2.4000,1.0000)--(2.4000,0.8000)--(2.2000,0.8000)--(2.2000,0.6000)--(2.0000,0.6000)--(2.0000,0.4000)--(1.8000,0.4000)--(1.8000,0.2000)--(1.6000,0.2000)--(1.6000,0.0000);
\draw [gray!10](0.0000,1.4000)--(0.0000,2.4000);
\draw [gray!10](0.2000,1.2000)--(0.2000,2.6000);
\draw [gray!10](0.4000,1.0000)--(0.4000,2.8000);
\draw [gray!10](0.6000,0.8000)--(0.6000,2.8000);
\draw [gray!10](0.8000,0.6000)--(0.8000,3.2000);
\draw [gray!10](1.0000,0.4000)--(1.0000,3.2000);
\draw [gray!10](1.2000,0.2000)--(1.2000,3.2000);
\draw [gray!10](1.4000,0.0000)--(1.4000,3.2000);
\draw [gray!10](1.6000,0.0000)--(1.6000,3.4000);
\draw [gray!10](1.8000,0.2000)--(1.8000,3.4000);
\draw [gray!10](2.0000,0.4000)--(2.0000,3.4000);
\draw [gray!10](2.2000,0.6000)--(2.2000,3.2000);
\draw [gray!10](2.4000,0.8000)--(2.4000,3.0000);
\draw [gray!10](2.6000,1.0000)--(2.6000,2.8000);
\draw [gray!10](2.8000,1.2000)--(2.8000,2.6000);
\draw [gray!10](3.0000,1.4000)--(3.0000,2.4000);
\draw [gray!10](3.2000,1.6000)--(3.2000,2.2000);
\draw [gray!10](1.4000,0.0000)--(1.6000,0.0000);
\draw [gray!10](1.2000,0.2000)--(1.8000,0.2000);
\draw [gray!10](1.0000,0.4000)--(2.0000,0.4000);
\draw [gray!10](0.8000,0.6000)--(2.2000,0.6000);
\draw [gray!10](0.6000,0.8000)--(2.4000,0.8000);
\draw [gray!10](0.4000,1.0000)--(2.6000,1.0000);
\draw [gray!10](0.2000,1.2000)--(2.8000,1.2000);
\draw [gray!10](0.0000,1.4000)--(3.0000,1.4000);
\draw [gray!10](0.0000,1.6000)--(3.2000,1.6000);
\draw [gray!10](0.0000,1.8000)--(3.4000,1.8000);
\draw [gray!10](0.0000,2.0000)--(3.4000,2.0000);
\draw [gray!10](0.0000,2.2000)--(3.2000,2.2000);
\draw [gray!10](0.0000,2.4000)--(3.0000,2.4000);
\draw [gray!10](0.2000,2.6000)--(2.8000,2.6000);
\draw [gray!10](0.4000,2.8000)--(2.6000,2.8000);
\draw [gray!10](0.8000,3.0000)--(2.4000,3.0000);
\draw [gray!10](0.8000,3.2000)--(2.2000,3.2000);
\draw [black](1.6000,0.0000)--(1.4000,0.0000)--(1.4000,0.2000)--(1.2000,0.2000)--(1.2000,0.4000)--(1.0000,0.4000)--(1.0000,0.6000)--(0.8000,0.6000)--(0.8000,0.8000)--(0.6000,0.8000)--(0.6000,1.0000)--(0.4000,1.0000)--(0.4000,1.2000)--(0.2000,1.2000)--(0.2000,1.4000)--(0.0000,1.4000)--(0.0000,1.6000)--(0.0000,1.8000)--(0.0000,2.0000)--(0.0000,2.2000)--(0.0000,2.4000)--(0.2000,2.4000)--(0.2000,2.6000)--(0.4000,2.6000)--(0.4000,2.8000)--(0.6000,2.8000)--(0.8000,2.8000)--(0.8000,3.0000)--(0.8000,3.2000)--(1.0000,3.2000)--(1.2000,3.2000)--(1.4000,3.2000)--(1.6000,3.2000)--(1.6000,3.4000)--(1.8000,3.4000)--(2.0000,3.4000)--(2.0000,3.2000)--(2.2000,3.2000)--(2.2000,3.0000)--(2.4000,3.0000)--(2.4000,2.8000)--(2.6000,2.8000)--(2.6000,2.6000)--(2.8000,2.6000)--(2.8000,2.4000)--(3.0000,2.4000)--(3.0000,2.2000)--(3.2000,2.2000)--(3.2000,2.0000)--(3.4000,2.0000)--(3.4000,1.8000);
\draw (0.0000,3.4000)--(0.0000,-0.2000);
\draw (0.0000,3.4000)--(3.6000,3.4000);
\draw (2.3,  2.4) node [] {$\lambda$};
\draw (1.1,  1.2) node [] {$\lambda$};
\draw (2,-1) node [] {E: Cylindric PP};
\end{scope}
\end{tikzpicture}
$$

$$
\begin{tikzpicture}[scale=0.6]
\begin{scope}[xshift=0, yshift=0]
\fill [gray!30](2.8000,0.8000)--(2.6000,0.8000)--(2.6000,1.0000)--(2.4000,1.0000)--(2.4000,1.2000)--(2.2000,1.2000)--(2.2000,1.4000)--(2.0000,1.4000)--(2.0000,1.6000)--(1.8000,1.6000)--(1.8000,1.8000)--(1.6000,1.8000)--(1.6000,2.0000)--(1.4000,2.0000)--(1.4000,2.2000)--(1.2000,2.2000)--(1.2000,2.4000)--(1.0000,2.4000)--(1.0000,2.6000)--(0.8000,2.6000)--(0.8000,2.8000)--(0.6000,2.8000)--(0.6000,3.0000)--(0.4000,3.0000)--(0.4000,3.2000)--(0.2000,3.2000)--(0.2000,3.4000)--(0.4000,3.4000)--(0.6000,3.4000)--(0.8000,3.4000)--(1.0000,3.4000)--(1.2000,3.4000)--(1.4000,3.4000)--(1.6000,3.4000)--(1.8000,3.4000)--(2.0000,3.4000)--(2.0000,3.2000)--(2.2000,3.2000)--(2.2000,3.0000)--(2.4000,3.0000)--(2.4000,2.8000)--(2.6000,2.8000)--(2.6000,2.6000)--(2.8000,2.6000)--(2.8000,2.4000)--(3.0000,2.4000)--(3.0000,2.2000)--(3.2000,2.2000)--(3.2000,2.0000)--(3.4000,2.0000)--(3.4000,1.8000)--(3.6000,1.8000)--(3.6000,1.6000)--(3.4000,1.6000)--(3.4000,1.4000)--(3.2000,1.4000)--(3.2000,1.2000)--(3.0000,1.2000)--(3.0000,1.0000)--(2.8000,1.0000)--(2.8000,0.8000);
\draw [gray!10](0.2000,3.2000)--(0.2000,3.4000);
\draw [gray!10](0.4000,3.0000)--(0.4000,3.4000);
\draw [gray!10](0.6000,2.8000)--(0.6000,3.4000);
\draw [gray!10](0.8000,2.6000)--(0.8000,3.4000);
\draw [gray!10](1.0000,2.4000)--(1.0000,3.4000);
\draw [gray!10](1.2000,2.2000)--(1.2000,3.4000);
\draw [gray!10](1.4000,2.0000)--(1.4000,3.4000);
\draw [gray!10](1.6000,1.8000)--(1.6000,3.4000);
\draw [gray!10](1.8000,1.6000)--(1.8000,3.4000);
\draw [gray!10](2.0000,1.4000)--(2.0000,3.4000);
\draw [gray!10](2.2000,1.2000)--(2.2000,3.2000);
\draw [gray!10](2.4000,1.0000)--(2.4000,3.0000);
\draw [gray!10](2.6000,0.8000)--(2.6000,2.8000);
\draw [gray!10](2.8000,0.8000)--(2.8000,2.6000);
\draw [gray!10](3.0000,1.0000)--(3.0000,2.4000);
\draw [gray!10](3.2000,1.2000)--(3.2000,2.2000);
\draw [gray!10](3.4000,1.4000)--(3.4000,2.0000);
\draw [gray!10](2.6000,0.8000)--(2.8000,0.8000);
\draw [gray!10](2.4000,1.0000)--(3.0000,1.0000);
\draw [gray!10](2.2000,1.2000)--(3.2000,1.2000);
\draw [gray!10](2.0000,1.4000)--(3.4000,1.4000);
\draw [gray!10](1.8000,1.6000)--(3.6000,1.6000);
\draw [gray!10](1.6000,1.8000)--(3.6000,1.8000);
\draw [gray!10](1.4000,2.0000)--(3.4000,2.0000);
\draw [gray!10](1.2000,2.2000)--(3.2000,2.2000);
\draw [gray!10](1.0000,2.4000)--(3.0000,2.4000);
\draw [gray!10](0.8000,2.6000)--(2.8000,2.6000);
\draw [gray!10](0.6000,2.8000)--(2.6000,2.8000);
\draw [gray!10](0.4000,3.0000)--(2.4000,3.0000);
\draw [gray!10](0.2000,3.2000)--(2.2000,3.2000);
\draw [black](2.8000,0.8000)--(2.6000,0.8000)--(2.6000,1.0000)--(2.4000,1.0000)--(2.4000,1.2000)--(2.2000,1.2000)--(2.2000,1.4000)--(2.0000,1.4000)--(2.0000,1.6000)--(1.8000,1.6000)--(1.8000,1.8000)--(1.6000,1.8000)--(1.6000,2.0000)--(1.4000,2.0000)--(1.4000,2.2000)--(1.2000,2.2000)--(1.2000,2.4000)--(1.0000,2.4000)--(1.0000,2.6000)--(0.8000,2.6000)--(0.8000,2.8000)--(0.6000,2.8000)--(0.6000,3.0000)--(0.4000,3.0000)--(0.4000,3.2000)--(0.2000,3.2000)--(0.2000,3.4000)--(0.4000,3.4000)--(0.6000,3.4000)--(0.8000,3.4000)--(1.0000,3.4000)--(1.2000,3.4000)--(1.4000,3.4000)--(1.6000,3.4000)--(1.8000,3.4000)--(2.0000,3.4000)--(2.0000,3.2000)--(2.2000,3.2000)--(2.2000,3.0000)--(2.4000,3.0000)--(2.4000,2.8000)--(2.6000,2.8000)--(2.6000,2.6000)--(2.8000,2.6000)--(2.8000,2.4000)--(3.0000,2.4000)--(3.0000,2.2000)--(3.2000,2.2000)--(3.2000,2.0000)--(3.4000,2.0000)--(3.4000,1.8000)--(3.6000,1.8000)--(3.6000,1.6000);
\draw (0.2000,3.4000)--(0.2000,-0.2000);
\draw (0.2000,3.4000)--(4.6000,3.4000);
\draw (2,-1) node [] {F: Doubled Shifted PP};
\end{scope}
\begin{scope}[xshift=175, yshift=0]
\fill [gray!30](1.8000,0.2000)--(1.6000,0.2000)--(1.6000,0.4000)--(1.4000,0.4000)--(1.4000,0.6000)--(1.2000,0.6000)--(1.2000,0.8000)--(1.0000,0.8000)--(1.0000,1.0000)--(0.8000,1.0000)--(0.8000,1.2000)--(0.6000,1.2000)--(0.6000,1.4000)--(0.4000,1.4000)--(0.4000,1.6000)--(0.2000,1.6000)--(0.2000,1.8000)--(0.0000,1.8000)--(0.0000,2.0000)--(0.0000,2.2000)--(0.0000,2.4000)--(0.0000,2.6000)--(0.0000,2.8000)--(0.2000,2.8000)--(0.4000,2.8000)--(0.4000,3.0000)--(0.6000,3.0000)--(0.8000,3.0000)--(1.0000,3.0000)--(1.0000,3.2000)--(1.2000,3.2000)--(1.2000,3.4000)--(1.4000,3.4000)--(1.6000,3.4000)--(1.8000,3.4000)--(2.0000,3.4000)--(2.2000,3.4000)--(2.4000,3.4000)--(2.6000,3.4000)--(2.8000,3.4000)--(3.0000,3.4000)--(3.2000,3.4000)--(3.2000,3.2000)--(3.4000,3.2000)--(3.4000,3.0000)--(3.6000,3.0000)--(3.6000,2.8000)--(3.8000,2.8000)--(3.8000,2.6000)--(4.0000,2.6000)--(4.0000,2.4000)--(3.8000,2.4000)--(3.8000,2.2000)--(3.6000,2.2000)--(3.6000,2.0000)--(3.4000,2.0000)--(3.4000,1.8000)--(3.2000,1.8000)--(3.2000,1.6000)--(3.0000,1.6000)--(3.0000,1.4000)--(2.8000,1.4000)--(2.8000,1.2000)--(2.6000,1.2000)--(2.6000,1.0000)--(2.4000,1.0000)--(2.4000,0.8000)--(2.2000,0.8000)--(2.2000,0.6000)--(2.0000,0.6000)--(2.0000,0.4000)--(1.8000,0.4000)--(1.8000,0.2000);
\draw [gray!10](0.0000,1.8000)--(0.0000,2.8000);
\draw [gray!10](0.2000,1.6000)--(0.2000,2.8000);
\draw [gray!10](0.4000,1.4000)--(0.4000,3.0000);
\draw [gray!10](0.6000,1.2000)--(0.6000,3.0000);
\draw [gray!10](0.8000,1.0000)--(0.8000,3.0000);
\draw [gray!10](1.0000,0.8000)--(1.0000,3.2000);
\draw [gray!10](1.2000,0.6000)--(1.2000,3.4000);
\draw [gray!10](1.4000,0.4000)--(1.4000,3.4000);
\draw [gray!10](1.6000,0.2000)--(1.6000,3.4000);
\draw [gray!10](1.8000,0.2000)--(1.8000,3.4000);
\draw [gray!10](2.0000,0.4000)--(2.0000,3.4000);
\draw [gray!10](2.2000,0.6000)--(2.2000,3.4000);
\draw [gray!10](2.4000,0.8000)--(2.4000,3.4000);
\draw [gray!10](2.6000,1.0000)--(2.6000,3.4000);
\draw [gray!10](2.8000,1.2000)--(2.8000,3.4000);
\draw [gray!10](3.0000,1.4000)--(3.0000,3.4000);
\draw [gray!10](3.2000,1.6000)--(3.2000,3.4000);
\draw [gray!10](3.4000,1.8000)--(3.4000,3.2000);
\draw [gray!10](3.6000,2.0000)--(3.6000,3.0000);
\draw [gray!10](3.8000,2.2000)--(3.8000,2.8000);
\draw [gray!10](1.6000,0.2000)--(1.8000,0.2000);
\draw [gray!10](1.4000,0.4000)--(2.0000,0.4000);
\draw [gray!10](1.2000,0.6000)--(2.2000,0.6000);
\draw [gray!10](1.0000,0.8000)--(2.4000,0.8000);
\draw [gray!10](0.8000,1.0000)--(2.6000,1.0000);
\draw [gray!10](0.6000,1.2000)--(2.8000,1.2000);
\draw [gray!10](0.4000,1.4000)--(3.0000,1.4000);
\draw [gray!10](0.2000,1.6000)--(3.2000,1.6000);
\draw [gray!10](0.0000,1.8000)--(3.4000,1.8000);
\draw [gray!10](0.0000,2.0000)--(3.6000,2.0000);
\draw [gray!10](0.0000,2.2000)--(3.8000,2.2000);
\draw [gray!10](0.0000,2.4000)--(4.0000,2.4000);
\draw [gray!10](0.0000,2.6000)--(4.0000,2.6000);
\draw [gray!10](0.0000,2.8000)--(3.8000,2.8000);
\draw [gray!10](0.4000,3.0000)--(3.6000,3.0000);
\draw [gray!10](1.0000,3.2000)--(3.4000,3.2000);
\draw [black](1.8000,0.2000)--(1.6000,0.2000)--(1.6000,0.4000)--(1.4000,0.4000)--(1.4000,0.6000)--(1.2000,0.6000)--(1.2000,0.8000)--(1.0000,0.8000)--(1.0000,1.0000)--(0.8000,1.0000)--(0.8000,1.2000)--(0.6000,1.2000)--(0.6000,1.4000)--(0.4000,1.4000)--(0.4000,1.6000)--(0.2000,1.6000)--(0.2000,1.8000)--(0.0000,1.8000)--(0.0000,2.0000)--(0.0000,2.2000)--(0.0000,2.4000)--(0.0000,2.6000)--(0.0000,2.8000)--(0.2000,2.8000)--(0.4000,2.8000)--(0.4000,3.0000)--(0.6000,3.0000)--(0.8000,3.0000)--(1.0000,3.0000)--(1.0000,3.2000)--(1.2000,3.2000)--(1.2000,3.4000)--(1.4000,3.4000)--(1.6000,3.4000)--(1.8000,3.4000)--(2.0000,3.4000)--(2.2000,3.4000)--(2.4000,3.4000)--(2.6000,3.4000)--(2.8000,3.4000)--(3.0000,3.4000)--(3.2000,3.4000)--(3.2000,3.2000)--(3.4000,3.2000)--(3.4000,3.0000)--(3.6000,3.0000)--(3.6000,2.8000)--(3.8000,2.8000)--(3.8000,2.6000)--(4.0000,2.6000)--(4.0000,2.4000);
\draw (0.0000,3.4000)--(0.0000,-0.2000);
\draw (0.0000,3.4000)--(4.4000,3.4000);
\draw (2,-1) node [] {G: Skew DSPP};
\end{scope}
\begin{scope}[xshift=350, yshift=0]
\fill [gray!30](1.6000,0.0000)--(1.4000,0.0000)--(1.4000,0.2000)--(1.2000,0.2000)--(1.2000,0.4000)--(1.0000,0.4000)--(1.0000,0.6000)--(0.8000,0.6000)--(0.8000,0.8000)--(0.6000,0.8000)--(0.6000,1.0000)--(0.4000,1.0000)--(0.4000,1.2000)--(0.2000,1.2000)--(0.2000,1.4000)--(0.0000,1.4000)--(0.0000,1.6000)--(0.0000,1.8000)--(0.0000,2.0000)--(0.0000,2.2000)--(0.0000,2.4000)--(0.2000,2.4000)--(0.2000,2.6000)--(0.4000,2.6000)--(0.6000,2.6000)--(0.6000,2.8000)--(0.8000,2.8000)--(0.8000,3.0000)--(0.8000,3.2000)--(1.0000,3.2000)--(1.0000,3.4000)--(1.2000,3.4000)--(1.4000,3.4000)--(1.6000,3.4000)--(1.8000,3.4000)--(2.0000,3.4000)--(2.0000,3.2000)--(2.2000,3.2000)--(2.2000,3.0000)--(2.4000,3.0000)--(2.4000,2.8000)--(2.6000,2.8000)--(2.6000,2.6000)--(2.8000,2.6000)--(2.8000,2.4000)--(3.0000,2.4000)--(3.0000,2.2000)--(3.2000,2.2000)--(3.2000,2.0000)--(3.4000,2.0000)--(3.4000,1.8000)--(3.2000,1.8000)--(3.2000,1.6000)--(3.0000,1.6000)--(3.0000,1.4000)--(2.8000,1.4000)--(2.8000,1.2000)--(2.6000,1.2000)--(2.6000,1.0000)--(2.4000,1.0000)--(2.4000,0.8000)--(2.2000,0.8000)--(2.2000,0.6000)--(2.0000,0.6000)--(2.0000,0.4000)--(1.8000,0.4000)--(1.8000,0.2000)--(1.6000,0.2000)--(1.6000,0.0000);
\draw [gray!10](0.0000,1.4000)--(0.0000,2.4000);
\draw [gray!10](0.2000,1.2000)--(0.2000,2.6000);
\draw [gray!10](0.4000,1.0000)--(0.4000,2.6000);
\draw [gray!10](0.6000,0.8000)--(0.6000,2.8000);
\draw [gray!10](0.8000,0.6000)--(0.8000,3.2000);
\draw [gray!10](1.0000,0.4000)--(1.0000,3.4000);
\draw [gray!10](1.2000,0.2000)--(1.2000,3.4000);
\draw [gray!10](1.4000,0.0000)--(1.4000,3.4000);
\draw [gray!10](1.6000,0.0000)--(1.6000,3.4000);
\draw [gray!10](1.8000,0.2000)--(1.8000,3.4000);
\draw [gray!10](2.0000,0.4000)--(2.0000,3.4000);
\draw [gray!10](2.2000,0.6000)--(2.2000,3.2000);
\draw [gray!10](2.4000,0.8000)--(2.4000,3.0000);
\draw [gray!10](2.6000,1.0000)--(2.6000,2.8000);
\draw [gray!10](2.8000,1.2000)--(2.8000,2.6000);
\draw [gray!10](3.0000,1.4000)--(3.0000,2.4000);
\draw [gray!10](3.2000,1.6000)--(3.2000,2.2000);
\draw [gray!10](1.4000,0.0000)--(1.6000,0.0000);
\draw [gray!10](1.2000,0.2000)--(1.8000,0.2000);
\draw [gray!10](1.0000,0.4000)--(2.0000,0.4000);
\draw [gray!10](0.8000,0.6000)--(2.2000,0.6000);
\draw [gray!10](0.6000,0.8000)--(2.4000,0.8000);
\draw [gray!10](0.4000,1.0000)--(2.6000,1.0000);
\draw [gray!10](0.2000,1.2000)--(2.8000,1.2000);
\draw [gray!10](0.0000,1.4000)--(3.0000,1.4000);
\draw [gray!10](0.0000,1.6000)--(3.2000,1.6000);
\draw [gray!10](0.0000,1.8000)--(3.4000,1.8000);
\draw [gray!10](0.0000,2.0000)--(3.4000,2.0000);
\draw [gray!10](0.0000,2.2000)--(3.2000,2.2000);
\draw [gray!10](0.0000,2.4000)--(3.0000,2.4000);
\draw [gray!10](0.2000,2.6000)--(2.8000,2.6000);
\draw [gray!10](0.6000,2.8000)--(2.6000,2.8000);
\draw [gray!10](0.8000,3.0000)--(2.4000,3.0000);
\draw [gray!10](0.8000,3.2000)--(2.2000,3.2000);
\draw [black](1.6000,0.0000)--(1.4000,0.0000)--(1.4000,0.2000)--(1.2000,0.2000)--(1.2000,0.4000)--(1.0000,0.4000)--(1.0000,0.6000)--(0.8000,0.6000)--(0.8000,0.8000)--(0.6000,0.8000)--(0.6000,1.0000)--(0.4000,1.0000)--(0.4000,1.2000)--(0.2000,1.2000)--(0.2000,1.4000)--(0.0000,1.4000)--(0.0000,1.6000)--(0.0000,1.8000)--(0.0000,2.0000)--(0.0000,2.2000)--(0.0000,2.4000)--(0.2000,2.4000)--(0.2000,2.6000)--(0.4000,2.6000)--(0.6000,2.6000)--(0.6000,2.8000)--(0.8000,2.8000)--(0.8000,3.0000)--(0.8000,3.2000)--(1.0000,3.2000)--(1.0000,3.4000)--(1.2000,3.4000)--(1.4000,3.4000)--(1.6000,3.4000)--(1.8000,3.4000)--(2.0000,3.4000)--(2.0000,3.2000)--(2.2000,3.2000)--(2.2000,3.0000)--(2.4000,3.0000)--(2.4000,2.8000)--(2.6000,2.8000)--(2.6000,2.6000)--(2.8000,2.6000)--(2.8000,2.4000)--(3.0000,2.4000)--(3.0000,2.2000)--(3.2000,2.2000)--(3.2000,2.0000)--(3.4000,2.0000)--(3.4000,1.8000);
\draw (0.0000,3.4000)--(0.0000,-0.2000);
\draw (0.0000,3.4000)--(3.8000,3.4000);
\draw [dashed] (0.0000,3.4000)--(3.6000,-0.2000);
\draw (2.3,  2.4) node [] {$\lambda$};
\draw (1.1,  1.2) node [] {$\lambda$};
\draw (2,-1) node [] {H: Sym Cylindric PP};
\end{scope}
\end{tikzpicture}
$$
\centerline{Fig. 1. Various kinds of defective plane partitions.}

\medskip

The Schur process was first introduced by Okounkov and Reshetikhin \cite{Okounkov2001,Okounkov2003} in $2001$. 
Later, they used the Schur process 
to derive the cyclic symmetry of the topological vertex
by considering a 
certain type of plane partitions \cite{Okoun2006}. 
This result was further developed by Iqbal et al.,
 for providing a short proof of the Nekrasov-Okounkov formula
\cite{Han2010,Iqbal2012,NekrasovOkounkov2006,Westbury2006}.
Borodin \cite{Borodin2007,Borodin2011} used the Schur process to derive the generating function for cylindric partitions,  introduced by Gessel and Krattenthaler \cite{GesselKratt1997}. The
Macdonald process, which is a $(q,t)$-generalization of the Schur process, was first introduced by Vuletic \cite{Vuletic2009},
and further developed by Corteel, Savelief, Vuleti\'c and Langer
 \cite{CorteelSaveliefVuletic,  Langer2013A, Langer2013B, Vuletic2007}
 in the study of 
weighted cylindric partitions 
and plane overpartitions.
Finally,  a survey of Macdonald processes was published 
by Borodin and Corwin \cite{Borodin2014}.

\smallskip

%
The Schur process approach is shown to be a powerful tool in the study of
various kinds of defective plane partitions.
In fact,
the above generating functions for defective plane partitions (A-E)
are specializations of
two general summation formulas for Schur processes, namely, 
the {\it open summation formula} \eqref{eq:MainOpen}
and the {\it cylindric summation formula}
\eqref{eq:MainCylinder}.
Formulas \eqref{eq:MainOpen} and \eqref{eq:MainCylinder}
have been developed by Okounkov, Reshetikhin, Borodin, Corteel, Savelief, Vuleti\'c and Langer \cite{Borodin2007,Borodin2011,CorteelSaveliefVuletic,  Langer2013A, Langer2013B,Okounkov2001,Okounkov2003, Vuletic2007,Vuletic2009}.
For convenience, they are also reproduced in Theorem  
\ref{th:MainZ}.

In the present paper we establish a new	summation formula for Schur processes,
called the {\it complete summation formula} \eqref{eq:MainComplete}. 
As an application, we obtain the generating functions  for
the skew doubled shifted plane partitions 
and the symmetric cylindric partitions (see Fig. 1 (F)/(G)/(H)).

\medskip

Let us reproduce some classical formulas in this introduction (see, e.g., \cite{Macdonald1995,Sagan1982,Sagan1993,StanleyEC2}).
The generating functions for ordinary plane partitions,
shifted plane partitions and symmetric plane partitions are the following
 respectively:
\begin{align} 
	\sum_{\omega \in \PP{PP}} z^{|\omega|} 
	&= 
	\prod_{i=1}^{\infty}\prod_{j=1}^{\infty}\frac{1}{1-z^{i+j-1}}=\prod_{k=1}^\infty (1-z^k)^{-k}; \label{eq:gfPP} \\
	\sum_{\omega \in \PP{ShiftPP}} z^{|\omega|} 
	&= 
\prod_{k=1}^\infty \frac{1}{1-z^{k}}
	\prod_{1\leq i< j \leq \infty} \frac{1}{1-z^{i+j}}; \label{eq:gfSPP}
\\
\sum_{\omega \in \PP{SPP}} z^{|\omega|} 
&=
	\prod_{\substack{k=1}}^\infty \frac{1}{1-z^{2k-1}}\prod_{\substack{1\leq i<j\leq \infty}} \frac{1}{1-z^{2(i+j-1)}}.\label{eq:gfSyPP}
\end{align}

As a byproduct of our complete summation formula \eqref{eq:MainComplete} we can further derive the following generating function
for doubled shifted plane partitions of width $m$ (see Fig. 1 (F) and Section 
\ref{sec:SkRPP} for the definition).
\begin{thm}\label{th:gfRPP}
	Let $\PP{DSPP}_m$ be the set of all doubled shifted plane partitions $\omega$ of width $m$ (i.e., skew doubled shifted plane partitions with profile $\delta=(-1)^{m-1}$).
Then
\begin{equation}\label{eq:gfRPP}
\sum_{\omega \in \PP{DSPP}_m}  z^{|\omega|}
=  
	\prod_{k= 1}^{\infty} \frac{1}{1-z^{k}} \times
	\prod_{k= 0}^{\infty}  \,
	\prod_{1\leq i<j\leq m-1} \frac{1}{1-z^{2mk+i+j}}.
\end{equation}
\end{thm}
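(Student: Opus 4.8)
The plan is to realize each $\omega\in\PP{DSPP}_m$ as the data of a Schur process supported on the diagonals of the profile $\delta=(-1)^{m-1}$, to read $\sum_{\omega}z^{|\omega|}$ as the partition function of that process under a principal specialization in one variable, and then to evaluate the partition function by the complete summation formula \eqref{eq:MainComplete}. First I would make precise the standard bijection between $\PP{DSPP}_m$ and sequences of partitions attached to the $m$ diagonals of the shape, the $m-1$ steps of $\delta=(-1)^{m-1}$ prescribing the interlacing of consecutive ones; because a doubled shifted plane partition is ``shifted at the two sides'', each end carries a reflecting wall, and unfolding the two reflections (the method of images) turns this finite strip into a sequence whose specialization data repeats with period $2m$ and in which a freely summed partition sits at each wall. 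This free summation at the walls is precisely what the ``doubling'' adds, and is the feature that \eqref{eq:MainComplete}, as opposed to the open or cylindric summation formulas, is designed to handle.

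Assigning to the $i$-th interior diagonal the specialization into the single variable $z^{i}$ (and hence $z^{\,i+2mk}$, $k\ge0$, to its images), the defining product of skew Schur functions of the Schur weight collapses, via $s_{\nu/\mu}(x)=x^{|\nu/\mu|}$ on horizontal strips and $0$ otherwise, to $z^{|\omega|}$; the convergence required by \eqref{eq:MainComplete} holds for $0<z<1$. Applying \eqref{eq:MainComplete} then writes $\sum_{\omega\in\PP{DSPP}_m}z^{|\omega|}$ as a product of two kinds of factors: Cauchy kernels $\prod(1-z^{a+b})^{-1}$ between specialization exponents $a,b$ separated by a wall, and Littlewood-type kernels $\prod(1-z^{a})^{-1}\prod_{a<b}(1-z^{a+b})^{-1}$ at the two walls, coming from $\sum_{\lambda}s_{\lambda}$. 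Carrying out the bookkeeping for $\delta=(-1)^{m-1}$: the interior exponents are $1,2,\dots,m-1$, their images add all translates by $2mk$, $k\ge0$, so the Cauchy part runs over pairs $\bigl(i+2mk_{1},\,j+2mk_{2}\bigr)$ with $1\le i<j\le m-1$, and since the sum of the two depends only on $i+j$ and on $k_{1}+k_{2}$, regrouping telescopes it to $\prod_{k\ge0}\prod_{1\le i<j\le m-1}(1-z^{2mk+i+j})^{-1}$, while the two wall kernels together contribute the linear factor $\prod_{k\ge1}(1-z^{k})^{-1}$. Multiplying the two pieces gives \eqref{eq:gfRPP}.

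The step I expect to be the main obstacle is exactly this last bookkeeping: one must verify that the multiset of exponents produced by \eqref{eq:MainComplete} for the profile $(-1)^{m-1}$ is, each with multiplicity one, $\{k:k\ge1\}$ from the walls together with $\{2mk+i+j:k\ge0,\ 1\le i<j\le m-1\}$ from the Cauchy kernels, with no overcounting or cancellation among the two walls and their periodic images — in particular that the wall part fills in all positive integers rather than only the multiples of $m$. As a check on the final form, letting $m\to\infty$ kills every block of the double product except $k=0$ and recovers the classical shifted plane partition generating function \eqref{eq:gfSPP}, while $m=1$ (empty profile) and $m=2$ both collapse \eqref{eq:gfRPP} to $\prod_{k\ge1}(1-z^{k})^{-1}$, consistent with the combinatorics of such narrow shapes.
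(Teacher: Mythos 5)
Your overall route is the paper's route: specialize the complete summation formula \eqref{eq:MainComplete} (equivalently \eqref{eq:MainUComplete} with $u_i=z$, i.e.\ $Z^i=\{z^i\}$) to the profile $\delta=(-1)^{m-1}$ and simplify the resulting product; the paper does exactly this, via Theorem \ref{th:MainU} and the multisets $W_1(\delta),W_2(\delta)$ of Theorem \ref{th:Skew_Double_Shifted_PP}. Your final answer and your limiting sanity checks are correct. But the step you yourself flag as the main obstacle is carried out incorrectly, in two ways. First, for $\delta=(-1)^{m-1}$ the Cauchy prefactor $\prod_{\delta_i>\delta_j}\Psi(Z^i,Z^j)$ in \eqref{eq:MainComplete} is \emph{empty} (there are no pairs $i<j$ with $\delta_i>\delta_j$), so none of the factors $(1-z^{i+j})^{-1}$ are ``Cauchy kernels between exponents separated by a wall''; they are all cross-terms $\prod_{a<b}(1-x_ax_b)^{-1}$ of the Littlewood kernels $\Phi(\mathbf{Z}^\delta_-)$ and $\Phi(z^{mk}\mathbf{Z})$. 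Second, and more seriously, your multiplicity accounting fails as stated: you take the quadratic exponents to come from pairs $\bigl(i+2mk_1,\,j+2mk_2\bigr)$ with $k_1,k_2$ ranging independently, and claim a ``telescoping'' regrouping. If the pairs really ranged over independent shifts, the exponent $2mK+i+j$ would occur with multiplicity $K+1$ (the number of ways to write $K=k_1+k_2$), and no regrouping removes that overcount. The reason the multiplicities are in fact all equal to $1$ is structural: in \eqref{eq:MainComplete} the quadratic part of each $\Phi(z^{mk}\mathbf{Z})$ only pairs exponents \emph{within the same translate} $k$, i.e.\ $z^{mk+i}$ with $z^{mk+j}$, producing each $2mk+i+j$ exactly once; there are no cross-period quadratic factors at all.

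For the linear part your worry is legitimate and the resolution is the following exact tiling of $\mathbb{Z}_{\ge 1}$: the linear part of $\Phi(\{z^1,\dots,z^{m-1}\})$ gives exponents $1,\dots,m-1$; the linear parts of $\Phi(z^{mk}\mathbf{Z})$, $k\ge 1$, give the exponents $mk+i$ with $1\le i\le m-1$; and the denominators $1-U_{h+1}^k=1-z^{mk}$ supply precisely the multiples of $m$. Each positive integer occurs exactly once, giving $\prod_{k\ge1}(1-z^k)^{-1}$. With these two corrections your argument coincides with the paper's proof.
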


Inspired by the works of Dewar, Murty and Kot\v{e}\v{s}ovec \cite{DewarMurty2013, Kotesovec2015}, we establish some useful theorems for
asymptotic formulas in \cite{HanXiong2017} (see Section \ref{sec:Asym}).
Furthermore, 
the following asymptotic formula for the number of doubled shifted plane partitions can also be obtained.
\begin{thm}\label{th:AsymRPP}
	Let $\PP{DSPP}_m(n)$ be the number of 
	doubled shifted plane partitions $\omega$ of width $m$ and size $n$. Then, 
	\begin{equation}\label{eq:AsymRPP}
	\PP{DSPP}_m(n) \ \sim \  
	C(m) \times  \frac{1}{n} 
	\exp\Bigl(\pi\sqrt{\frac{(m^2+m+2)n}{6m}} \, \Bigr),
	\end{equation}
	where $C(m)$ is a constant independent of $n$ given by the following expression:
$$
C(m)=
	\left(\prod_{i=1}^{m-2}\,\prod_{j=i+1}^{m-i-1} {{\sin\left(\frac{i+j}{2m}\pi\right)}} \right)^{-1}
	\frac{\sqrt{m^2+m+2}}{2^{(m^2-3m+14)/4}  \sqrt{3m} } .
$$
\end{thm}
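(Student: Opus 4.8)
The plan is to extract \eqref{eq:AsymRPP} from the generating function \eqref{eq:gfRPP} by feeding it into the Meinardus-type asymptotic theorem of Section~\ref{sec:Asym}. First I rewrite \eqref{eq:gfRPP} in the shape $\sum_{\omega\in\PP{DSPP}_m}z^{|\omega|}=\prod_{n\ge1}(1-z^n)^{-a_n}$ and read off the exponents. Since $3\le i+j\le 2m-3$ whenever $1\le i<j\le m-1$, for each $n$ there is at most one $k\ge0$ with $2mk+i+j=n$; hence $a_n=1+p(r)$ where $r$ is the residue of $n$ modulo $2m$ and $p(r):=\#\{(i,j):1\le i<j\le m-1,\ i+j=r\}$ for $0\le r<2m$. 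Thus $(a_n)_{n\ge1}$ is periodic of period $2m$, and its Dirichlet series factors through the Hurwitz zeta function:
\[
L(s):=\sum_{n\ge1}\frac{a_n}{n^{s}}=\zeta(s)+(2m)^{-s}\sum_{r=1}^{2m-1}p(r)\,\zeta\!\left(s,\tfrac{r}{2m}\right).
\]
This continues meromorphically with a single, simple pole at $s=1$ and polynomial growth in vertical strips; since moreover $a_n\ge1$ for all $n$ (which settles the aperiodicity/minor-arc hypothesis), the asymptotic theorem applies and gives
\[
\PP{DSPP}_m(n)\ \sim\ e^{L'(0)}\,(4\pi)^{-1/2}\bigl(A\,\zeta(2)\bigr)^{(1-2L(0))/4}\,n^{(L(0)-3/2)/2}\exp\!\bigl(2\sqrt{A\,\zeta(2)\,n}\bigr),
\]
where $A=\mathrm{Res}_{\,s=1}L(s)$. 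It then remains to compute $A$, $L(0)$ and $L'(0)$.

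Set $P_0:=\sum_{r}p(r)=\binom{m-1}{2}$ (the total number of pairs) and $P_1:=\sum_{r}r\,p(r)=\sum_{1\le i<j\le m-1}(i+j)=\tfrac{m(m-1)(m-2)}{2}$ (each element of $\{1,\dots,m-1\}$ lies in exactly $m-2$ pairs). As $\zeta(s)$ and every $\zeta(s,a)$ have residue $1$ at $s=1$, $A=1+\tfrac{P_0}{2m}=\tfrac{m^2+m+2}{4m}$, whence $2\sqrt{A\zeta(2)n}=\pi\sqrt{(m^2+m+2)n/(6m)}$, exactly the exponent in \eqref{eq:AsymRPP}. Using $\zeta(0)=-\tfrac12$ and $\zeta(0,a)=\tfrac12-a$ one gets $L(0)=-\tfrac12+\tfrac12P_0-\tfrac1{2m}P_1=-\tfrac12$, so $n^{(L(0)-3/2)/2}=n^{-1}$ and $(1-2L(0))/4=\tfrac12$, matching the $1/n$ factor of \eqref{eq:AsymRPP}.

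The technical core is $L'(0)$. Differentiating $L$ and invoking $\zeta'(0)=-\tfrac12\log(2\pi)$ together with Lerch's formula $\partial_s\zeta(s,a)\big|_{s=0}=\log\Gamma(a)-\tfrac12\log(2\pi)$, the coefficient of $\log(2m)$ equals $\sum_r p(r)(\tfrac12-\tfrac{r}{2m})=\tfrac12P_0-\tfrac1{2m}P_1=0$ and drops out, so that
\[
L'(0)=-\frac{m^2-3m+4}{4}\log(2\pi)+\sum_{1\le i<j\le m-1}\log\Gamma\!\left(\frac{i+j}{2m}\right).
\]
To convert the $\Gamma$-product into the $\sin$-product of $C(m)$, I pair the value $r=i+j$ with $2m-r$, using the bijection $(i,j)\mapsto(m-j,m-i)$ on pairs (so that $p(r)=p(2m-r)$), the reflection formula $\Gamma(x)\Gamma(1-x)=\pi/\sin(\pi x)$, and the self-paired central value $r=m$ contributing $\Gamma(\tfrac12)=\sqrt\pi$; tallying the resulting power of $\pi$, which comes out to $\tfrac12P_0=\tfrac{(m-1)(m-2)}{4}$, yields
\[
\prod_{1\le i<j\le m-1}\Gamma\!\left(\frac{i+j}{2m}\right)=\pi^{(m-1)(m-2)/4}\left(\prod_{i=1}^{m-2}\,\prod_{j=i+1}^{m-i-1}\sin\!\left(\frac{i+j}{2m}\pi\right)\right)^{-1},
\]
once one checks that the double product on the right runs exactly over the pairs $1\le i<j\le m-1$ with $i+j<m$. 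Substituting $L'(0)$, $A$ and $L(0)=-\tfrac12$ into $e^{L'(0)}(4\pi)^{-1/2}(A\zeta(2))^{1/2}$ and consolidating the powers of $2$ (the exponent collapsing to $-(m^2-3m+14)/4$) reproduces $C(m)$. The step I expect to be most delicate is this last one — the bookkeeping of the $\pi$-exponent in the reflection argument and the collection of the powers of $2$ — together with the preliminary task of checking that the periodic sequence $(a_n)$ meets the precise hypotheses of the asymptotic theorem of Section~\ref{sec:Asym}; both are elementary but error-prone.
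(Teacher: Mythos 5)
Your proposal is correct: every numerical ingredient checks out (the residue $A=\tfrac{m^2+m+2}{4m}$ giving the exponent, $L(0)=-\tfrac12$ giving the factor $n^{-1}$, the vanishing of the $\log(2m)$ coefficient, the $\pi$-exponent $\tfrac{(m-1)(m-2)}{4}$ in the reflection argument, and the final power of $2$ collapsing to $-(m^2-3m+14)/4$; the case $m=3$ reproduces $\sqrt7/24$). The route, however, is packaged differently from the paper's. The paper never invokes the classical Meinardus theorem with the Dirichlet series $L(s)$, its residue, $L(0)$ and $L'(0)$; instead it feeds the multisets $W_1(\delta)=\{1,\dots,m\}$ and $W_2(\delta)=\{i+j\}$ into the ready-made Theorem~\ref{th:Asy_Ribbon:multi}, whose constants $v$, $r$, $b$ already encode exactly the $\Gamma(y_i/x_i)$-data that you recover by hand via Hurwitz zeta and Lerch's formula, and it deduces Theorem~\ref{th:AsymRPP} as the specialization $\delta=(-1)^{m-1}$ of the general skew statement (Theorem~\ref{th:Skew_Double_Shifted_PP_Asy}). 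Your approach buys a small simplification --- treating $\prod_k(1-z^k)^{-1}$ as a single $\zeta(s)$ avoids the evaluation of $\prod_{t=1}^m\Gamma(t/m)$ via $\prod_{j=1}^{m-1}\sin(j\pi/m)=m/2^{m-1}$ that the paper needs --- at the cost of having to verify Meinardus' analytic-continuation, growth and minor-arc hypotheses yourself (your observation that $a_n\ge1$ for all $n$ settles the minor-arc bound by domination is the standard and correct argument). One caveat: the ``Meinardus-type theorem of Section~\ref{sec:Asym}'' you cite is not actually the statement given there, so you must either import the classical Meinardus theorem from outside the paper or recast your product in the form $\prod_i\prod_{k\ge0}(1-q^{x_ik+y_i})^{-1}$ demanded by Theorem~\ref{th:Asy_Ribbon:multi}; either repair is routine. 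The reflection-formula evaluation of $\prod_{i<j}\Gamma\bigl(\tfrac{i+j}{2m}\bigr)$ is identical in both treatments.
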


The proofs of Theorems \ref{th:gfRPP} and \ref{th:AsymRPP} are given in Section \ref{sec:gfRPP}.
For example, for $m=3$ in Theorems \ref{th:gfRPP} and \ref{th:AsymRPP}, 
 the generating function and asymptotic formula for 
doubled shifted plane partitions of width $m=3$ (see Fig. 2, case DSPPa) are
\begin{align} 
	\sum_{\omega \in \PP{DSPP}_{3}} z^{|\omega|} 
	&\ =\  \prod_{k\geq 1} \frac{1}{(1-z^{k})(1-z^{6k-3})};\\
\PP{DSPP}_3(n) 
	&\ \sim	 \ 
	\frac{\sqrt{7}}{24}\,\frac {\exp(\pi \frac{\sqrt {7n}}{3} )}{ n}.\label{eq:asym3a}
\end{align}

In fact, our Theorems \ref{th:gfRPP} and \ref{th:AsymRPP}
can be extended to {\it skew doubled shifted plane partitions} 
(see Sections \ref{sec:SkRPP} and \ref{sec:gfRPP}).
The asymptotic formulas for two other skew doubled shifted plane partitions,
together with some ordinary plane partitions (\PP{PP}), cylindric partitions (\PP{CP}) and symmetric cylindric partitions  (\PP{SCP}) are also reproduced next. The proofs of those asymptotic formulas can be found in
Sections \ref{sec:gfRPP} and \ref{sec:gfSyCPP} for \PP{DSPP} and \PP{SCP}
respectively, 
and in \cite{HanXiong2017} for \PP{PP} and \PP{CP}.

\begin{align*}
\begin{tikzpicture}[scale=0.6]
\begin{scope}[xshift=0, yshift=0]
\fill [gray!30](1.8000,0.2000)--(1.6000,0.2000)--(1.4000,0.2000)--(1.2000,0.2000)--(1.0000,0.2000)--(0.8000,0.2000)--(0.6000,0.2000)--(0.4000,0.2000)--(0.2000,0.2000)--(0.0000,0.2000)--(-0.2000,0.2000)--(-0.4000,0.2000)--(-0.6000,0.2000)--(-0.8000,0.2000)--(-0.8000,0.4000)--(-0.8000,0.6000)--(-0.8000,0.8000)--(-0.6000,0.8000)--(-0.4000,0.8000)--(-0.2000,0.8000)--(0.0000,0.8000)--(0.2000,0.8000)--(0.4000,0.8000)--(0.6000,0.8000)--(0.8000,0.8000)--(1.0000,0.8000)--(1.2000,0.8000)--(1.4000,0.8000)--(1.6000,0.8000)--(1.8000,0.8000)--(1.8000,0.6000)--(1.8000,0.4000)--(1.8000,0.2000);
\draw [gray!10](-0.8000,0.2000)--(-0.8000,0.8000);
\draw [gray!10](-0.6000,0.2000)--(-0.6000,0.8000);
\draw [gray!10](-0.4000,0.2000)--(-0.4000,0.8000);
\draw [gray!10](-0.2000,0.2000)--(-0.2000,0.8000);
\draw [gray!10](0.0000,0.2000)--(0.0000,0.8000);
\draw [gray!10](0.2000,0.2000)--(0.2000,0.8000);
\draw [gray!10](0.4000,0.2000)--(0.4000,0.8000);
\draw [gray!10](0.6000,0.2000)--(0.6000,0.8000);
\draw [gray!10](0.8000,0.2000)--(0.8000,0.8000);
\draw [gray!10](1.0000,0.2000)--(1.0000,0.8000);
\draw [gray!10](1.2000,0.2000)--(1.2000,0.8000);
\draw [gray!10](1.4000,0.2000)--(1.4000,0.8000);
\draw [gray!10](1.6000,0.2000)--(1.6000,0.8000);
\draw [gray!10](-0.8000,0.2000)--(1.8000,0.2000);
\draw [gray!10](-0.8000,0.4000)--(1.8000,0.4000);
\draw [gray!10](-0.8000,0.6000)--(1.8000,0.6000);
\draw [black](1.8000,0.2000)--(1.6000,0.2000)--(1.4000,0.2000)--(1.2000,0.2000)--(1.0000,0.2000)--(0.8000,0.2000)--(0.6000,0.2000)--(0.4000,0.2000)--(0.2000,0.2000)--(0.0000,0.2000)--(-0.2000,0.2000)--(-0.4000,0.2000)--(-0.6000,0.2000)--(-0.8000,0.2000)--(-0.8000,0.4000)--(-0.8000,0.6000)--(-0.8000,0.8000)--(-0.6000,0.8000)--(-0.4000,0.8000)--(-0.2000,0.8000)--(0.0000,0.8000)--(0.2000,0.8000)--(0.4000,0.8000)--(0.6000,0.8000)--(0.8000,0.8000)--(1.0000,0.8000)--(1.2000,0.8000)--(1.4000,0.8000)--(1.6000,0.8000)--(1.8000,0.8000);
\draw (-0.8000,0.8000)--(-0.8000,-0.2000);
\draw (-0.8000,0.8000)--(2.2000,0.8000);
\draw (0.8,-1) node [] {PPa $\sim \frac{1.93}{n^3} e^{4.44 \sqrt{n}}$};
\end{scope}
\begin{scope}[xshift=180, yshift=0]
\fill [gray!30](1.8000,0.2000)--(1.6000,0.2000)--(1.4000,0.2000)--(1.2000,0.2000)--(1.0000,0.2000)--(0.8000,0.2000)--(0.6000,0.2000)--(0.4000,0.2000)--(0.2000,0.2000)--(0.0000,0.2000)--(-0.2000,0.2000)--(-0.4000,0.2000)--(-0.6000,0.2000)--(-0.8000,0.2000)--(-0.8000,0.4000)--(-0.8000,0.6000)--(-0.6000,0.6000)--(-0.6000,0.8000)--(-0.4000,0.8000)--(-0.2000,0.8000)--(0.0000,0.8000)--(0.2000,0.8000)--(0.4000,0.8000)--(0.6000,0.8000)--(0.8000,0.8000)--(1.0000,0.8000)--(1.2000,0.8000)--(1.4000,0.8000)--(1.6000,0.8000)--(1.8000,0.8000)--(1.8000,0.6000)--(1.8000,0.4000)--(1.8000,0.2000);
\draw [gray!10](-0.8000,0.2000)--(-0.8000,0.6000);
\draw [gray!10](-0.6000,0.2000)--(-0.6000,0.8000);
\draw [gray!10](-0.4000,0.2000)--(-0.4000,0.8000);
\draw [gray!10](-0.2000,0.2000)--(-0.2000,0.8000);
\draw [gray!10](0.0000,0.2000)--(0.0000,0.8000);
\draw [gray!10](0.2000,0.2000)--(0.2000,0.8000);
\draw [gray!10](0.4000,0.2000)--(0.4000,0.8000);
\draw [gray!10](0.6000,0.2000)--(0.6000,0.8000);
\draw [gray!10](0.8000,0.2000)--(0.8000,0.8000);
\draw [gray!10](1.0000,0.2000)--(1.0000,0.8000);
\draw [gray!10](1.2000,0.2000)--(1.2000,0.8000);
\draw [gray!10](1.4000,0.2000)--(1.4000,0.8000);
\draw [gray!10](1.6000,0.2000)--(1.6000,0.8000);
\draw [gray!10](-0.8000,0.2000)--(1.8000,0.2000);
\draw [gray!10](-0.8000,0.4000)--(1.8000,0.4000);
\draw [gray!10](-0.8000,0.6000)--(1.8000,0.6000);
\draw [black](1.8000,0.2000)--(1.6000,0.2000)--(1.4000,0.2000)--(1.2000,0.2000)--(1.0000,0.2000)--(0.8000,0.2000)--(0.6000,0.2000)--(0.4000,0.2000)--(0.2000,0.2000)--(0.0000,0.2000)--(-0.2000,0.2000)--(-0.4000,0.2000)--(-0.6000,0.2000)--(-0.8000,0.2000)--(-0.8000,0.4000)--(-0.8000,0.6000)--(-0.6000,0.6000)--(-0.6000,0.8000)--(-0.4000,0.8000)--(-0.2000,0.8000)--(0.0000,0.8000)--(0.2000,0.8000)--(0.4000,0.8000)--(0.6000,0.8000)--(0.8000,0.8000)--(1.0000,0.8000)--(1.2000,0.8000)--(1.4000,0.8000)--(1.6000,0.8000)--(1.8000,0.8000);
\draw (-0.8000,0.8000)--(-0.8000,-0.2000);
\draw (-0.8000,0.8000)--(2.2000,0.8000);
\draw (0.8,-1) node [] {PPb $\sim \frac{5.81}{n^3} e^{4.44 \sqrt{n}}$};
\end{scope}
\begin{scope}[xshift=360, yshift=0]
\fill [gray!30](1.8000,0.2000)--(1.6000,0.2000)--(1.4000,0.2000)--(1.2000,0.2000)--(1.0000,0.2000)--(0.8000,0.2000)--(0.6000,0.2000)--(0.4000,0.2000)--(0.2000,0.2000)--(0.0000,0.2000)--(-0.2000,0.2000)--(-0.4000,0.2000)--(-0.6000,0.2000)--(-0.8000,0.2000)--(-0.8000,0.4000)--(-0.8000,0.6000)--(-0.6000,0.6000)--(-0.4000,0.6000)--(-0.4000,0.8000)--(-0.2000,0.8000)--(0.0000,0.8000)--(0.2000,0.8000)--(0.4000,0.8000)--(0.6000,0.8000)--(0.8000,0.8000)--(1.0000,0.8000)--(1.2000,0.8000)--(1.4000,0.8000)--(1.6000,0.8000)--(1.8000,0.8000)--(1.8000,0.6000)--(1.8000,0.4000)--(1.8000,0.2000);
\draw [gray!10](-0.8000,0.2000)--(-0.8000,0.6000);
\draw [gray!10](-0.6000,0.2000)--(-0.6000,0.6000);
\draw [gray!10](-0.4000,0.2000)--(-0.4000,0.8000);
\draw [gray!10](-0.2000,0.2000)--(-0.2000,0.8000);
\draw [gray!10](0.0000,0.2000)--(0.0000,0.8000);
\draw [gray!10](0.2000,0.2000)--(0.2000,0.8000);
\draw [gray!10](0.4000,0.2000)--(0.4000,0.8000);
\draw [gray!10](0.6000,0.2000)--(0.6000,0.8000);
\draw [gray!10](0.8000,0.2000)--(0.8000,0.8000);
\draw [gray!10](1.0000,0.2000)--(1.0000,0.8000);
\draw [gray!10](1.2000,0.2000)--(1.2000,0.8000);
\draw [gray!10](1.4000,0.2000)--(1.4000,0.8000);
\draw [gray!10](1.6000,0.2000)--(1.6000,0.8000);
\draw [gray!10](-0.8000,0.2000)--(1.8000,0.2000);
\draw [gray!10](-0.8000,0.4000)--(1.8000,0.4000);
\draw [gray!10](-0.8000,0.6000)--(1.8000,0.6000);
\draw [black](1.8000,0.2000)--(1.6000,0.2000)--(1.4000,0.2000)--(1.2000,0.2000)--(1.0000,0.2000)--(0.8000,0.2000)--(0.6000,0.2000)--(0.4000,0.2000)--(0.2000,0.2000)--(0.0000,0.2000)--(-0.2000,0.2000)--(-0.4000,0.2000)--(-0.6000,0.2000)--(-0.8000,0.2000)--(-0.8000,0.4000)--(-0.8000,0.6000)--(-0.6000,0.6000)--(-0.4000,0.6000)--(-0.4000,0.8000)--(-0.2000,0.8000)--(0.0000,0.8000)--(0.2000,0.8000)--(0.4000,0.8000)--(0.6000,0.8000)--(0.8000,0.8000)--(1.0000,0.8000)--(1.2000,0.8000)--(1.4000,0.8000)--(1.6000,0.8000)--(1.8000,0.8000);
\draw (-0.8000,0.8000)--(-0.8000,-0.2000);
\draw (-0.8000,0.8000)--(2.2000,0.8000);
\draw (0.8,-1) node [] {PPc $\sim \frac{11.62}{n^3} e^{4.44 \sqrt{n}}$};
\end{scope}
\end{tikzpicture}
\\
\begin{tikzpicture}[scale=0.6]
\begin{scope}[xshift=0, yshift=0]
\fill [gray!30](3.2000,0.2000)--(3.2000,0.4000)--(3.0000,0.4000)--(3.0000,0.6000)--(2.8000,0.6000)--(2.8000,0.8000)--(2.6000,0.8000)--(2.6000,1.0000)--(2.4000,1.0000)--(2.4000,1.2000)--(2.2000,1.2000)--(2.2000,1.4000)--(2.0000,1.4000)--(2.0000,1.6000)--(1.8000,1.6000)--(1.8000,1.8000)--(1.6000,1.8000)--(1.6000,2.0000)--(1.4000,2.0000)--(1.4000,2.2000)--(1.2000,2.2000)--(1.2000,2.4000)--(1.0000,2.4000)--(1.0000,2.6000)--(0.8000,2.6000)--(0.8000,2.8000)--(0.8000,3.0000)--(1.0000,3.0000)--(1.2000,3.0000)--(1.4000,3.0000)--(1.6000,3.0000)--(1.6000,2.8000)--(1.8000,2.8000)--(1.8000,2.6000)--(2.0000,2.6000)--(2.0000,2.4000)--(2.2000,2.4000)--(2.2000,2.2000)--(2.4000,2.2000)--(2.4000,2.0000)--(2.6000,2.0000)--(2.6000,1.8000)--(2.8000,1.8000)--(2.8000,1.6000)--(3.0000,1.6000)--(3.0000,1.4000)--(3.2000,1.4000)--(3.2000,1.2000)--(3.4000,1.2000)--(3.4000,1.0000)--(3.6000,1.0000)--(3.6000,0.8000)--(3.8000,0.8000)--(3.8000,0.6000)--(3.6000,0.6000)--(3.6000,0.4000)--(3.4000,0.4000)--(3.4000,0.2000)--(3.2000,0.2000);
\draw [gray!10](0.8000,2.6000)--(0.8000,3.0000);
\draw [gray!10](1.0000,2.4000)--(1.0000,3.0000);
\draw [gray!10](1.2000,2.2000)--(1.2000,3.0000);
\draw [gray!10](1.4000,2.0000)--(1.4000,3.0000);
\draw [gray!10](1.6000,1.8000)--(1.6000,3.0000);
\draw [gray!10](1.8000,1.6000)--(1.8000,2.8000);
\draw [gray!10](2.0000,1.4000)--(2.0000,2.6000);
\draw [gray!10](2.2000,1.2000)--(2.2000,2.4000);
\draw [gray!10](2.4000,1.0000)--(2.4000,2.2000);
\draw [gray!10](2.6000,0.8000)--(2.6000,2.0000);
\draw [gray!10](2.8000,0.6000)--(2.8000,1.8000);
\draw [gray!10](3.0000,0.4000)--(3.0000,1.6000);
\draw [gray!10](3.2000,0.2000)--(3.2000,1.4000);
\draw [gray!10](3.4000,0.2000)--(3.4000,1.2000);
\draw [gray!10](3.6000,0.4000)--(3.6000,1.0000);
\draw [gray!10](3.2000,0.2000)--(3.4000,0.2000);
\draw [gray!10](3.0000,0.4000)--(3.6000,0.4000);
\draw [gray!10](2.8000,0.6000)--(3.8000,0.6000);
\draw [gray!10](2.6000,0.8000)--(3.8000,0.8000);
\draw [gray!10](2.4000,1.0000)--(3.6000,1.0000);
\draw [gray!10](2.2000,1.2000)--(3.4000,1.2000);
\draw [gray!10](2.0000,1.4000)--(3.2000,1.4000);
\draw [gray!10](1.8000,1.6000)--(3.0000,1.6000);
\draw [gray!10](1.6000,1.8000)--(2.8000,1.8000);
\draw [gray!10](1.4000,2.0000)--(2.6000,2.0000);
\draw [gray!10](1.2000,2.2000)--(2.4000,2.2000);
\draw [gray!10](1.0000,2.4000)--(2.2000,2.4000);
\draw [gray!10](0.8000,2.6000)--(2.0000,2.6000);
\draw [gray!10](0.8000,2.8000)--(1.8000,2.8000);
\draw [black](3.2000,0.2000)--(3.2000,0.4000)--(3.0000,0.4000)--(3.0000,0.6000)--(2.8000,0.6000)--(2.8000,0.8000)--(2.6000,0.8000)--(2.6000,1.0000)--(2.4000,1.0000)--(2.4000,1.2000)--(2.2000,1.2000)--(2.2000,1.4000)--(2.0000,1.4000)--(2.0000,1.6000)--(1.8000,1.6000)--(1.8000,1.8000)--(1.6000,1.8000)--(1.6000,2.0000)--(1.4000,2.0000)--(1.4000,2.2000)--(1.2000,2.2000)--(1.2000,2.4000)--(1.0000,2.4000)--(1.0000,2.6000)--(0.8000,2.6000)--(0.8000,2.8000)--(0.8000,3.0000)--(1.0000,3.0000)--(1.2000,3.0000)--(1.4000,3.0000)--(1.6000,3.0000)--(1.6000,2.8000)--(1.8000,2.8000)--(1.8000,2.6000)--(2.0000,2.6000)--(2.0000,2.4000)--(2.2000,2.4000)--(2.2000,2.2000)--(2.4000,2.2000)--(2.4000,2.0000)--(2.6000,2.0000)--(2.6000,1.8000)--(2.8000,1.8000)--(2.8000,1.6000)--(3.0000,1.6000)--(3.0000,1.4000)--(3.2000,1.4000)--(3.2000,1.2000)--(3.4000,1.2000)--(3.4000,1.0000)--(3.6000,1.0000)--(3.6000,0.8000)--(3.8000,0.8000);
\draw (0.8000,3.0000)--(0.8000,-0.2000);
\draw (0.8000,3.0000)--(4.2000,3.0000);
\draw (3.2,  2.0) node [] {$\lambda$};
\draw (2.0,  0.8) node [] {$\lambda$};
\draw (2.5,-1) node [] {CPa $\sim \frac{0.144}n e^{2.56 \sqrt{n}}$};
\end{scope}
\begin{scope}[xshift=180, yshift=0]
\fill [gray!30](3.2000,0.2000)--(3.2000,0.4000)--(3.0000,0.4000)--(3.0000,0.6000)--(2.8000,0.6000)--(2.8000,0.8000)--(2.6000,0.8000)--(2.6000,1.0000)--(2.4000,1.0000)--(2.4000,1.2000)--(2.2000,1.2000)--(2.2000,1.4000)--(2.0000,1.4000)--(2.0000,1.6000)--(1.8000,1.6000)--(1.8000,1.8000)--(1.6000,1.8000)--(1.6000,2.0000)--(1.4000,2.0000)--(1.4000,2.2000)--(1.2000,2.2000)--(1.2000,2.4000)--(1.0000,2.4000)--(1.0000,2.6000)--(1.0000,2.8000)--(1.2000,2.8000)--(1.2000,3.0000)--(1.4000,3.0000)--(1.6000,3.0000)--(1.6000,2.8000)--(1.8000,2.8000)--(1.8000,2.6000)--(2.0000,2.6000)--(2.0000,2.4000)--(2.2000,2.4000)--(2.2000,2.2000)--(2.4000,2.2000)--(2.4000,2.0000)--(2.6000,2.0000)--(2.6000,1.8000)--(2.8000,1.8000)--(2.8000,1.6000)--(3.0000,1.6000)--(3.0000,1.4000)--(3.2000,1.4000)--(3.2000,1.2000)--(3.4000,1.2000)--(3.4000,1.0000)--(3.6000,1.0000)--(3.6000,0.8000)--(3.8000,0.8000)--(3.8000,0.6000)--(3.6000,0.6000)--(3.6000,0.4000)--(3.4000,0.4000)--(3.4000,0.2000)--(3.2000,0.2000);
\draw [gray!10](1.0000,2.4000)--(1.0000,2.8000);
\draw [gray!10](1.2000,2.2000)--(1.2000,3.0000);
\draw [gray!10](1.4000,2.0000)--(1.4000,3.0000);
\draw [gray!10](1.6000,1.8000)--(1.6000,3.0000);
\draw [gray!10](1.8000,1.6000)--(1.8000,2.8000);
\draw [gray!10](2.0000,1.4000)--(2.0000,2.6000);
\draw [gray!10](2.2000,1.2000)--(2.2000,2.4000);
\draw [gray!10](2.4000,1.0000)--(2.4000,2.2000);
\draw [gray!10](2.6000,0.8000)--(2.6000,2.0000);
\draw [gray!10](2.8000,0.6000)--(2.8000,1.8000);
\draw [gray!10](3.0000,0.4000)--(3.0000,1.6000);
\draw [gray!10](3.2000,0.2000)--(3.2000,1.4000);
\draw [gray!10](3.4000,0.2000)--(3.4000,1.2000);
\draw [gray!10](3.6000,0.4000)--(3.6000,1.0000);
\draw [gray!10](3.2000,0.2000)--(3.4000,0.2000);
\draw [gray!10](3.0000,0.4000)--(3.6000,0.4000);
\draw [gray!10](2.8000,0.6000)--(3.8000,0.6000);
\draw [gray!10](2.6000,0.8000)--(3.8000,0.8000);
\draw [gray!10](2.4000,1.0000)--(3.6000,1.0000);
\draw [gray!10](2.2000,1.2000)--(3.4000,1.2000);
\draw [gray!10](2.0000,1.4000)--(3.2000,1.4000);
\draw [gray!10](1.8000,1.6000)--(3.0000,1.6000);
\draw [gray!10](1.6000,1.8000)--(2.8000,1.8000);
\draw [gray!10](1.4000,2.0000)--(2.6000,2.0000);
\draw [gray!10](1.2000,2.2000)--(2.4000,2.2000);
\draw [gray!10](1.0000,2.4000)--(2.2000,2.4000);
\draw [gray!10](1.0000,2.6000)--(2.0000,2.6000);
\draw [gray!10](1.0000,2.8000)--(1.8000,2.8000);
\draw [black](3.2000,0.2000)--(3.2000,0.4000)--(3.0000,0.4000)--(3.0000,0.6000)--(2.8000,0.6000)--(2.8000,0.8000)--(2.6000,0.8000)--(2.6000,1.0000)--(2.4000,1.0000)--(2.4000,1.2000)--(2.2000,1.2000)--(2.2000,1.4000)--(2.0000,1.4000)--(2.0000,1.6000)--(1.8000,1.6000)--(1.8000,1.8000)--(1.6000,1.8000)--(1.6000,2.0000)--(1.4000,2.0000)--(1.4000,2.2000)--(1.2000,2.2000)--(1.2000,2.4000)--(1.0000,2.4000)--(1.0000,2.6000)--(1.0000,2.8000)--(1.2000,2.8000)--(1.2000,3.0000)--(1.4000,3.0000)--(1.6000,3.0000)--(1.6000,2.8000)--(1.8000,2.8000)--(1.8000,2.6000)--(2.0000,2.6000)--(2.0000,2.4000)--(2.2000,2.4000)--(2.2000,2.2000)--(2.4000,2.2000)--(2.4000,2.0000)--(2.6000,2.0000)--(2.6000,1.8000)--(2.8000,1.8000)--(2.8000,1.6000)--(3.0000,1.6000)--(3.0000,1.4000)--(3.2000,1.4000)--(3.2000,1.2000)--(3.4000,1.2000)--(3.4000,1.0000)--(3.6000,1.0000)--(3.6000,0.8000)--(3.8000,0.8000);
\draw (1.0000,3.0000)--(1.0000,-0.2000);
\draw (1.0000,3.0000)--(4.2000,3.0000);
\draw (3.2,  2.0) node [] {$\lambda$};
\draw (2.0,  0.8) node [] {$\lambda$};
\draw (2.5,-1) node [] {CPb $\sim \frac{0.161}n e^{2.86 \sqrt{n}}$};
\end{scope}
\begin{scope}[xshift=360, yshift=0]
\fill [gray!30](3.2000,0.2000)--(3.2000,0.4000)--(3.0000,0.4000)--(3.0000,0.6000)--(2.8000,0.6000)--(2.8000,0.8000)--(2.6000,0.8000)--(2.6000,1.0000)--(2.4000,1.0000)--(2.4000,1.2000)--(2.2000,1.2000)--(2.2000,1.4000)--(2.0000,1.4000)--(2.0000,1.6000)--(1.8000,1.6000)--(1.8000,1.8000)--(1.6000,1.8000)--(1.6000,2.0000)--(1.4000,2.0000)--(1.4000,2.2000)--(1.2000,2.2000)--(1.2000,2.4000)--(1.0000,2.4000)--(1.0000,2.6000)--(1.0000,2.8000)--(1.2000,2.8000)--(1.4000,2.8000)--(1.4000,3.0000)--(1.6000,3.0000)--(1.6000,2.8000)--(1.8000,2.8000)--(1.8000,2.6000)--(2.0000,2.6000)--(2.0000,2.4000)--(2.2000,2.4000)--(2.2000,2.2000)--(2.4000,2.2000)--(2.4000,2.0000)--(2.6000,2.0000)--(2.6000,1.8000)--(2.8000,1.8000)--(2.8000,1.6000)--(3.0000,1.6000)--(3.0000,1.4000)--(3.2000,1.4000)--(3.2000,1.2000)--(3.4000,1.2000)--(3.4000,1.0000)--(3.6000,1.0000)--(3.6000,0.8000)--(3.8000,0.8000)--(3.8000,0.6000)--(3.6000,0.6000)--(3.6000,0.4000)--(3.4000,0.4000)--(3.4000,0.2000)--(3.2000,0.2000);
\draw [gray!10](1.0000,2.4000)--(1.0000,2.8000);
\draw [gray!10](1.2000,2.2000)--(1.2000,2.8000);
\draw [gray!10](1.4000,2.0000)--(1.4000,3.0000);
\draw [gray!10](1.6000,1.8000)--(1.6000,3.0000);
\draw [gray!10](1.8000,1.6000)--(1.8000,2.8000);
\draw [gray!10](2.0000,1.4000)--(2.0000,2.6000);
\draw [gray!10](2.2000,1.2000)--(2.2000,2.4000);
\draw [gray!10](2.4000,1.0000)--(2.4000,2.2000);
\draw [gray!10](2.6000,0.8000)--(2.6000,2.0000);
\draw [gray!10](2.8000,0.6000)--(2.8000,1.8000);
\draw [gray!10](3.0000,0.4000)--(3.0000,1.6000);
\draw [gray!10](3.2000,0.2000)--(3.2000,1.4000);
\draw [gray!10](3.4000,0.2000)--(3.4000,1.2000);
\draw [gray!10](3.6000,0.4000)--(3.6000,1.0000);
\draw [gray!10](3.2000,0.2000)--(3.4000,0.2000);
\draw [gray!10](3.0000,0.4000)--(3.6000,0.4000);
\draw [gray!10](2.8000,0.6000)--(3.8000,0.6000);
\draw [gray!10](2.6000,0.8000)--(3.8000,0.8000);
\draw [gray!10](2.4000,1.0000)--(3.6000,1.0000);
\draw [gray!10](2.2000,1.2000)--(3.4000,1.2000);
\draw [gray!10](2.0000,1.4000)--(3.2000,1.4000);
\draw [gray!10](1.8000,1.6000)--(3.0000,1.6000);
\draw [gray!10](1.6000,1.8000)--(2.8000,1.8000);
\draw [gray!10](1.4000,2.0000)--(2.6000,2.0000);
\draw [gray!10](1.2000,2.2000)--(2.4000,2.2000);
\draw [gray!10](1.0000,2.4000)--(2.2000,2.4000);
\draw [gray!10](1.0000,2.6000)--(2.0000,2.6000);
\draw [gray!10](1.0000,2.8000)--(1.8000,2.8000);
\draw [black](3.2000,0.2000)--(3.2000,0.4000)--(3.0000,0.4000)--(3.0000,0.6000)--(2.8000,0.6000)--(2.8000,0.8000)--(2.6000,0.8000)--(2.6000,1.0000)--(2.4000,1.0000)--(2.4000,1.2000)--(2.2000,1.2000)--(2.2000,1.4000)--(2.0000,1.4000)--(2.0000,1.6000)--(1.8000,1.6000)--(1.8000,1.8000)--(1.6000,1.8000)--(1.6000,2.0000)--(1.4000,2.0000)--(1.4000,2.2000)--(1.2000,2.2000)--(1.2000,2.4000)--(1.0000,2.4000)--(1.0000,2.6000)--(1.0000,2.8000)--(1.2000,2.8000)--(1.4000,2.8000)--(1.4000,3.0000)--(1.6000,3.0000)--(1.6000,2.8000)--(1.8000,2.8000)--(1.8000,2.6000)--(2.0000,2.6000)--(2.0000,2.4000)--(2.2000,2.4000)--(2.2000,2.2000)--(2.4000,2.2000)--(2.4000,2.0000)--(2.6000,2.0000)--(2.6000,1.8000)--(2.8000,1.8000)--(2.8000,1.6000)--(3.0000,1.6000)--(3.0000,1.4000)--(3.2000,1.4000)--(3.2000,1.2000)--(3.4000,1.2000)--(3.4000,1.0000)--(3.6000,1.0000)--(3.6000,0.8000)--(3.8000,0.8000);
\draw (1.0000,3.0000)--(1.0000,-0.2000);
\draw (1.0000,3.0000)--(4.2000,3.0000);
\draw (3.2,  2.0) node [] {$\lambda$};
\draw (2.0,  0.8) node [] {$\lambda$};
\draw (2.5,-1) node [] {CPc $\sim \frac{0.114}n e^{2.86 \sqrt{n}}$};
\end{scope}
\end{tikzpicture}
\\
\begin{tikzpicture}[scale=0.6]
\begin{scope}[xshift=0, yshift=0]
\fill [gray!30](3.2000,0.2000)--(3.2000,0.4000)--(3.0000,0.4000)--(3.0000,0.6000)--(2.8000,0.6000)--(2.8000,0.8000)--(2.6000,0.8000)--(2.6000,1.0000)--(2.4000,1.0000)--(2.4000,1.2000)--(2.2000,1.2000)--(2.2000,1.4000)--(2.0000,1.4000)--(2.0000,1.6000)--(1.8000,1.6000)--(1.8000,1.8000)--(1.6000,1.8000)--(1.6000,2.0000)--(1.4000,2.0000)--(1.4000,2.2000)--(1.2000,2.2000)--(1.2000,2.4000)--(1.0000,2.4000)--(1.0000,2.6000)--(0.8000,2.6000)--(0.8000,2.8000)--(0.8000,3.0000)--(0.8000,3.2000)--(1.0000,3.2000)--(1.2000,3.2000)--(1.4000,3.2000)--(1.4000,3.0000)--(1.6000,3.0000)--(1.6000,2.8000)--(1.8000,2.8000)--(1.8000,2.6000)--(2.0000,2.6000)--(2.0000,2.4000)--(2.2000,2.4000)--(2.2000,2.2000)--(2.4000,2.2000)--(2.4000,2.0000)--(2.6000,2.0000)--(2.6000,1.8000)--(2.8000,1.8000)--(2.8000,1.6000)--(3.0000,1.6000)--(3.0000,1.4000)--(3.2000,1.4000)--(3.2000,1.2000)--(3.4000,1.2000)--(3.4000,1.0000)--(3.6000,1.0000)--(3.6000,0.8000)--(3.8000,0.8000)--(3.8000,0.6000)--(3.6000,0.6000)--(3.6000,0.4000)--(3.4000,0.4000)--(3.4000,0.2000)--(3.2000,0.2000);
\draw [gray!10](0.8000,2.6000)--(0.8000,3.2000);
\draw [gray!10](1.0000,2.4000)--(1.0000,3.2000);
\draw [gray!10](1.2000,2.2000)--(1.2000,3.2000);
\draw [gray!10](1.4000,2.0000)--(1.4000,3.2000);
\draw [gray!10](1.6000,1.8000)--(1.6000,3.0000);
\draw [gray!10](1.8000,1.6000)--(1.8000,2.8000);
\draw [gray!10](2.0000,1.4000)--(2.0000,2.6000);
\draw [gray!10](2.2000,1.2000)--(2.2000,2.4000);
\draw [gray!10](2.4000,1.0000)--(2.4000,2.2000);
\draw [gray!10](2.6000,0.8000)--(2.6000,2.0000);
\draw [gray!10](2.8000,0.6000)--(2.8000,1.8000);
\draw [gray!10](3.0000,0.4000)--(3.0000,1.6000);
\draw [gray!10](3.2000,0.2000)--(3.2000,1.4000);
\draw [gray!10](3.4000,0.2000)--(3.4000,1.2000);
\draw [gray!10](3.6000,0.4000)--(3.6000,1.0000);
\draw [gray!10](3.2000,0.2000)--(3.4000,0.2000);
\draw [gray!10](3.0000,0.4000)--(3.6000,0.4000);
\draw [gray!10](2.8000,0.6000)--(3.8000,0.6000);
\draw [gray!10](2.6000,0.8000)--(3.8000,0.8000);
\draw [gray!10](2.4000,1.0000)--(3.6000,1.0000);
\draw [gray!10](2.2000,1.2000)--(3.4000,1.2000);
\draw [gray!10](2.0000,1.4000)--(3.2000,1.4000);
\draw [gray!10](1.8000,1.6000)--(3.0000,1.6000);
\draw [gray!10](1.6000,1.8000)--(2.8000,1.8000);
\draw [gray!10](1.4000,2.0000)--(2.6000,2.0000);
\draw [gray!10](1.2000,2.2000)--(2.4000,2.2000);
\draw [gray!10](1.0000,2.4000)--(2.2000,2.4000);
\draw [gray!10](0.8000,2.6000)--(2.0000,2.6000);
\draw [gray!10](0.8000,2.8000)--(1.8000,2.8000);
\draw [gray!10](0.8000,3.0000)--(1.6000,3.0000);
\draw [black](3.2000,0.2000)--(3.2000,0.4000)--(3.0000,0.4000)--(3.0000,0.6000)--(2.8000,0.6000)--(2.8000,0.8000)--(2.6000,0.8000)--(2.6000,1.0000)--(2.4000,1.0000)--(2.4000,1.2000)--(2.2000,1.2000)--(2.2000,1.4000)--(2.0000,1.4000)--(2.0000,1.6000)--(1.8000,1.6000)--(1.8000,1.8000)--(1.6000,1.8000)--(1.6000,2.0000)--(1.4000,2.0000)--(1.4000,2.2000)--(1.2000,2.2000)--(1.2000,2.4000)--(1.0000,2.4000)--(1.0000,2.6000)--(0.8000,2.6000)--(0.8000,2.8000)--(0.8000,3.0000)--(0.8000,3.2000)--(1.0000,3.2000)--(1.2000,3.2000)--(1.4000,3.2000)--(1.4000,3.0000)--(1.6000,3.0000)--(1.6000,2.8000)--(1.8000,2.8000)--(1.8000,2.6000)--(2.0000,2.6000)--(2.0000,2.4000)--(2.2000,2.4000)--(2.2000,2.2000)--(2.4000,2.2000)--(2.4000,2.0000)--(2.6000,2.0000)--(2.6000,1.8000)--(2.8000,1.8000)--(2.8000,1.6000)--(3.0000,1.6000)--(3.0000,1.4000)--(3.2000,1.4000)--(3.2000,1.2000)--(3.4000,1.2000)--(3.4000,1.0000)--(3.6000,1.0000)--(3.6000,0.8000)--(3.8000,0.8000);
\draw (0.8000,3.2000)--(0.8000,-0.2000);
\draw (0.8000,3.2000)--(4.2000,3.2000);
\draw [dashed] (0.8000,3.2000)--(4.0000,0.0000);
\draw (3.2,  2.0) node [] {$\lambda$};
\draw (2.0,  0.8) node [] {$\lambda$};
	\draw (2.5,-1) node [] {SCPa $\sim \frac{0.146}{n^{0.85}} e^{2.14 \sqrt{n}}$};
\end{scope}
\begin{scope}[xshift=180, yshift=0]
\fill [gray!30](3.2000,0.2000)--(3.2000,0.4000)--(3.0000,0.4000)--(3.0000,0.6000)--(2.8000,0.6000)--(2.8000,0.8000)--(2.6000,0.8000)--(2.6000,1.0000)--(2.4000,1.0000)--(2.4000,1.2000)--(2.2000,1.2000)--(2.2000,1.4000)--(2.0000,1.4000)--(2.0000,1.6000)--(1.8000,1.6000)--(1.8000,1.8000)--(1.6000,1.8000)--(1.6000,2.0000)--(1.4000,2.0000)--(1.4000,2.2000)--(1.2000,2.2000)--(1.2000,2.4000)--(1.0000,2.4000)--(1.0000,2.6000)--(1.0000,2.8000)--(1.2000,2.8000)--(1.2000,3.0000)--(1.4000,3.0000)--(1.6000,3.0000)--(1.6000,2.8000)--(1.8000,2.8000)--(1.8000,2.6000)--(2.0000,2.6000)--(2.0000,2.4000)--(2.2000,2.4000)--(2.2000,2.2000)--(2.4000,2.2000)--(2.4000,2.0000)--(2.6000,2.0000)--(2.6000,1.8000)--(2.8000,1.8000)--(2.8000,1.6000)--(3.0000,1.6000)--(3.0000,1.4000)--(3.2000,1.4000)--(3.2000,1.2000)--(3.4000,1.2000)--(3.4000,1.0000)--(3.6000,1.0000)--(3.6000,0.8000)--(3.8000,0.8000)--(3.8000,0.6000)--(3.6000,0.6000)--(3.6000,0.4000)--(3.4000,0.4000)--(3.4000,0.2000)--(3.2000,0.2000);
\draw [gray!10](1.0000,2.4000)--(1.0000,2.8000);
\draw [gray!10](1.2000,2.2000)--(1.2000,3.0000);
\draw [gray!10](1.4000,2.0000)--(1.4000,3.0000);
\draw [gray!10](1.6000,1.8000)--(1.6000,3.0000);
\draw [gray!10](1.8000,1.6000)--(1.8000,2.8000);
\draw [gray!10](2.0000,1.4000)--(2.0000,2.6000);
\draw [gray!10](2.2000,1.2000)--(2.2000,2.4000);
\draw [gray!10](2.4000,1.0000)--(2.4000,2.2000);
\draw [gray!10](2.6000,0.8000)--(2.6000,2.0000);
\draw [gray!10](2.8000,0.6000)--(2.8000,1.8000);
\draw [gray!10](3.0000,0.4000)--(3.0000,1.6000);
\draw [gray!10](3.2000,0.2000)--(3.2000,1.4000);
\draw [gray!10](3.4000,0.2000)--(3.4000,1.2000);
\draw [gray!10](3.6000,0.4000)--(3.6000,1.0000);
\draw [gray!10](3.2000,0.2000)--(3.4000,0.2000);
\draw [gray!10](3.0000,0.4000)--(3.6000,0.4000);
\draw [gray!10](2.8000,0.6000)--(3.8000,0.6000);
\draw [gray!10](2.6000,0.8000)--(3.8000,0.8000);
\draw [gray!10](2.4000,1.0000)--(3.6000,1.0000);
\draw [gray!10](2.2000,1.2000)--(3.4000,1.2000);
\draw [gray!10](2.0000,1.4000)--(3.2000,1.4000);
\draw [gray!10](1.8000,1.6000)--(3.0000,1.6000);
\draw [gray!10](1.6000,1.8000)--(2.8000,1.8000);
\draw [gray!10](1.4000,2.0000)--(2.6000,2.0000);
\draw [gray!10](1.2000,2.2000)--(2.4000,2.2000);
\draw [gray!10](1.0000,2.4000)--(2.2000,2.4000);
\draw [gray!10](1.0000,2.6000)--(2.0000,2.6000);
\draw [gray!10](1.0000,2.8000)--(1.8000,2.8000);
\draw [black](3.2000,0.2000)--(3.2000,0.4000)--(3.0000,0.4000)--(3.0000,0.6000)--(2.8000,0.6000)--(2.8000,0.8000)--(2.6000,0.8000)--(2.6000,1.0000)--(2.4000,1.0000)--(2.4000,1.2000)--(2.2000,1.2000)--(2.2000,1.4000)--(2.0000,1.4000)--(2.0000,1.6000)--(1.8000,1.6000)--(1.8000,1.8000)--(1.6000,1.8000)--(1.6000,2.0000)--(1.4000,2.0000)--(1.4000,2.2000)--(1.2000,2.2000)--(1.2000,2.4000)--(1.0000,2.4000)--(1.0000,2.6000)--(1.0000,2.8000)--(1.2000,2.8000)--(1.2000,3.0000)--(1.4000,3.0000)--(1.6000,3.0000)--(1.6000,2.8000)--(1.8000,2.8000)--(1.8000,2.6000)--(2.0000,2.6000)--(2.0000,2.4000)--(2.2000,2.4000)--(2.2000,2.2000)--(2.4000,2.2000)--(2.4000,2.0000)--(2.6000,2.0000)--(2.6000,1.8000)--(2.8000,1.8000)--(2.8000,1.6000)--(3.0000,1.6000)--(3.0000,1.4000)--(3.2000,1.4000)--(3.2000,1.2000)--(3.4000,1.2000)--(3.4000,1.0000)--(3.6000,1.0000)--(3.6000,0.8000)--(3.8000,0.8000);
\draw (1.0000,3.0000)--(1.0000,-0.2000);
\draw (1.0000,3.0000)--(4.2000,3.0000);
\draw [dashed] (1.0000,3.0000)--(4.0000,0.0000);
\draw (3.2,  2.0) node [] {$\lambda$};
\draw (2.0,  0.8) node [] {$\lambda$};
	\draw (2.5,-1) node [] {SCPb $\sim \frac{0.131}{n^{1.05}} e^{2.14 \sqrt{n}}$};
\end{scope}
\begin{scope}[xshift=360, yshift=0]
\fill [gray!30](3.2000,0.2000)--(3.2000,0.4000)--(3.0000,0.4000)--(3.0000,0.6000)--(2.8000,0.6000)--(2.8000,0.8000)--(2.6000,0.8000)--(2.6000,1.0000)--(2.4000,1.0000)--(2.4000,1.2000)--(2.2000,1.2000)--(2.2000,1.4000)--(2.0000,1.4000)--(2.0000,1.6000)--(1.8000,1.6000)--(1.8000,1.8000)--(1.6000,1.8000)--(1.6000,2.0000)--(1.4000,2.0000)--(1.4000,2.2000)--(1.2000,2.2000)--(1.2000,2.4000)--(1.0000,2.4000)--(1.0000,2.6000)--(1.2000,2.6000)--(1.4000,2.6000)--(1.4000,2.8000)--(1.4000,3.0000)--(1.6000,3.0000)--(1.6000,2.8000)--(1.8000,2.8000)--(1.8000,2.6000)--(2.0000,2.6000)--(2.0000,2.4000)--(2.2000,2.4000)--(2.2000,2.2000)--(2.4000,2.2000)--(2.4000,2.0000)--(2.6000,2.0000)--(2.6000,1.8000)--(2.8000,1.8000)--(2.8000,1.6000)--(3.0000,1.6000)--(3.0000,1.4000)--(3.2000,1.4000)--(3.2000,1.2000)--(3.4000,1.2000)--(3.4000,1.0000)--(3.6000,1.0000)--(3.6000,0.8000)--(3.8000,0.8000)--(3.8000,0.6000)--(3.6000,0.6000)--(3.6000,0.4000)--(3.4000,0.4000)--(3.4000,0.2000)--(3.2000,0.2000);
\draw [gray!10](1.0000,2.4000)--(1.0000,2.6000);
\draw [gray!10](1.2000,2.2000)--(1.2000,2.6000);
\draw [gray!10](1.4000,2.0000)--(1.4000,3.0000);
\draw [gray!10](1.6000,1.8000)--(1.6000,3.0000);
\draw [gray!10](1.8000,1.6000)--(1.8000,2.8000);
\draw [gray!10](2.0000,1.4000)--(2.0000,2.6000);
\draw [gray!10](2.2000,1.2000)--(2.2000,2.4000);
\draw [gray!10](2.4000,1.0000)--(2.4000,2.2000);
\draw [gray!10](2.6000,0.8000)--(2.6000,2.0000);
\draw [gray!10](2.8000,0.6000)--(2.8000,1.8000);
\draw [gray!10](3.0000,0.4000)--(3.0000,1.6000);
\draw [gray!10](3.2000,0.2000)--(3.2000,1.4000);
\draw [gray!10](3.4000,0.2000)--(3.4000,1.2000);
\draw [gray!10](3.6000,0.4000)--(3.6000,1.0000);
\draw [gray!10](3.2000,0.2000)--(3.4000,0.2000);
\draw [gray!10](3.0000,0.4000)--(3.6000,0.4000);
\draw [gray!10](2.8000,0.6000)--(3.8000,0.6000);
\draw [gray!10](2.6000,0.8000)--(3.8000,0.8000);
\draw [gray!10](2.4000,1.0000)--(3.6000,1.0000);
\draw [gray!10](2.2000,1.2000)--(3.4000,1.2000);
\draw [gray!10](2.0000,1.4000)--(3.2000,1.4000);
\draw [gray!10](1.8000,1.6000)--(3.0000,1.6000);
\draw [gray!10](1.6000,1.8000)--(2.8000,1.8000);
\draw [gray!10](1.4000,2.0000)--(2.6000,2.0000);
\draw [gray!10](1.2000,2.2000)--(2.4000,2.2000);
\draw [gray!10](1.0000,2.4000)--(2.2000,2.4000);
\draw [gray!10](1.0000,2.6000)--(2.0000,2.6000);
\draw [gray!10](1.4000,2.8000)--(1.8000,2.8000);
\draw [black](3.2000,0.2000)--(3.2000,0.4000)--(3.0000,0.4000)--(3.0000,0.6000)--(2.8000,0.6000)--(2.8000,0.8000)--(2.6000,0.8000)--(2.6000,1.0000)--(2.4000,1.0000)--(2.4000,1.2000)--(2.2000,1.2000)--(2.2000,1.4000)--(2.0000,1.4000)--(2.0000,1.6000)--(1.8000,1.6000)--(1.8000,1.8000)--(1.6000,1.8000)--(1.6000,2.0000)--(1.4000,2.0000)--(1.4000,2.2000)--(1.2000,2.2000)--(1.2000,2.4000)--(1.0000,2.4000)--(1.0000,2.6000)--(1.2000,2.6000)--(1.4000,2.6000)--(1.4000,2.8000)--(1.4000,3.0000)--(1.6000,3.0000)--(1.6000,2.8000)--(1.8000,2.8000)--(1.8000,2.6000)--(2.0000,2.6000)--(2.0000,2.4000)--(2.2000,2.4000)--(2.2000,2.2000)--(2.4000,2.2000)--(2.4000,2.0000)--(2.6000,2.0000)--(2.6000,1.8000)--(2.8000,1.8000)--(2.8000,1.6000)--(3.0000,1.6000)--(3.0000,1.4000)--(3.2000,1.4000)--(3.2000,1.2000)--(3.4000,1.2000)--(3.4000,1.0000)--(3.6000,1.0000)--(3.6000,0.8000)--(3.8000,0.8000);
\draw (1.0000,3.0000)--(1.0000,-0.2000);
\draw (1.0000,3.0000)--(4.2000,3.0000);
\draw [dashed] (1.0000,3.0000)--(4.0000,0.0000);
\draw (3.2,  2.0) node [] {$\lambda$};
\draw (2.0,  0.8) node [] {$\lambda$};
	\draw (2.5,-1) node [] {SCPc $\sim \frac{0.116}{n^{1.15}} e^{2.14 \sqrt{n}}$};
\end{scope}
\end{tikzpicture}
\\\begin{tikzpicture}[scale=0.6]
\begin{scope}[xshift=0, yshift=0]
\fill [gray!30](3.2000,0.2000)--(3.2000,0.4000)--(3.0000,0.4000)--(3.0000,0.6000)--(2.8000,0.6000)--(2.8000,0.8000)--(2.6000,0.8000)--(2.6000,1.0000)--(2.4000,1.0000)--(2.4000,1.2000)--(2.2000,1.2000)--(2.2000,1.4000)--(2.0000,1.4000)--(2.0000,1.6000)--(1.8000,1.6000)--(1.8000,1.8000)--(1.6000,1.8000)--(1.6000,2.0000)--(1.4000,2.0000)--(1.4000,2.2000)--(1.2000,2.2000)--(1.2000,2.4000)--(1.0000,2.4000)--(1.0000,2.6000)--(1.2000,2.6000)--(1.4000,2.6000)--(1.6000,2.6000)--(1.6000,2.4000)--(1.8000,2.4000)--(1.8000,2.2000)--(2.0000,2.2000)--(2.0000,2.0000)--(2.2000,2.0000)--(2.2000,1.8000)--(2.4000,1.8000)--(2.4000,1.6000)--(2.6000,1.6000)--(2.6000,1.4000)--(2.8000,1.4000)--(2.8000,1.2000)--(3.0000,1.2000)--(3.0000,1.0000)--(3.2000,1.0000)--(3.2000,0.8000)--(3.4000,0.8000)--(3.4000,0.6000)--(3.6000,0.6000)--(3.6000,0.4000)--(3.4000,0.4000)--(3.4000,0.2000)--(3.2000,0.2000);
\draw [gray!10](1.0000,2.4000)--(1.0000,2.6000);
\draw [gray!10](1.2000,2.2000)--(1.2000,2.6000);
\draw [gray!10](1.4000,2.0000)--(1.4000,2.6000);
\draw [gray!10](1.6000,1.8000)--(1.6000,2.6000);
\draw [gray!10](1.8000,1.6000)--(1.8000,2.4000);
\draw [gray!10](2.0000,1.4000)--(2.0000,2.2000);
\draw [gray!10](2.2000,1.2000)--(2.2000,2.0000);
\draw [gray!10](2.4000,1.0000)--(2.4000,1.8000);
\draw [gray!10](2.6000,0.8000)--(2.6000,1.6000);
\draw [gray!10](2.8000,0.6000)--(2.8000,1.4000);
\draw [gray!10](3.0000,0.4000)--(3.0000,1.2000);
\draw [gray!10](3.2000,0.2000)--(3.2000,1.0000);
\draw [gray!10](3.4000,0.2000)--(3.4000,0.8000);
\draw [gray!10](3.2000,0.2000)--(3.4000,0.2000);
\draw [gray!10](3.0000,0.4000)--(3.6000,0.4000);
\draw [gray!10](2.8000,0.6000)--(3.6000,0.6000);
\draw [gray!10](2.6000,0.8000)--(3.4000,0.8000);
\draw [gray!10](2.4000,1.0000)--(3.2000,1.0000);
\draw [gray!10](2.2000,1.2000)--(3.0000,1.2000);
\draw [gray!10](2.0000,1.4000)--(2.8000,1.4000);
\draw [gray!10](1.8000,1.6000)--(2.6000,1.6000);
\draw [gray!10](1.6000,1.8000)--(2.4000,1.8000);
\draw [gray!10](1.4000,2.0000)--(2.2000,2.0000);
\draw [gray!10](1.2000,2.2000)--(2.0000,2.2000);
\draw [gray!10](1.0000,2.4000)--(1.8000,2.4000);
\draw [black](3.2000,0.2000)--(3.2000,0.4000)--(3.0000,0.4000)--(3.0000,0.6000)--(2.8000,0.6000)--(2.8000,0.8000)--(2.6000,0.8000)--(2.6000,1.0000)--(2.4000,1.0000)--(2.4000,1.2000)--(2.2000,1.2000)--(2.2000,1.4000)--(2.0000,1.4000)--(2.0000,1.6000)--(1.8000,1.6000)--(1.8000,1.8000)--(1.6000,1.8000)--(1.6000,2.0000)--(1.4000,2.0000)--(1.4000,2.2000)--(1.2000,2.2000)--(1.2000,2.4000)--(1.0000,2.4000)--(1.0000,2.6000)--(1.2000,2.6000)--(1.4000,2.6000)--(1.6000,2.6000)--(1.6000,2.4000)--(1.8000,2.4000)--(1.8000,2.2000)--(2.0000,2.2000)--(2.0000,2.0000)--(2.2000,2.0000)--(2.2000,1.8000)--(2.4000,1.8000)--(2.4000,1.6000)--(2.6000,1.6000)--(2.6000,1.4000)--(2.8000,1.4000)--(2.8000,1.2000)--(3.0000,1.2000)--(3.0000,1.0000)--(3.2000,1.0000)--(3.2000,0.8000)--(3.4000,0.8000)--(3.4000,0.6000)--(3.6000,0.6000);
\draw (1.0000,2.6000)--(1.0000,-0.2000);
\draw (1.0000,2.6000)--(4.0000,2.6000);
\draw (2.5,-1) node [] {DSPPa $\sim \frac{0.110}n e^{2.77 \sqrt{n}}$};
\end{scope}
\begin{scope}[xshift=180, yshift=0]
\fill [gray!30](3.2000,0.2000)--(3.2000,0.4000)--(3.0000,0.4000)--(3.0000,0.6000)--(2.8000,0.6000)--(2.8000,0.8000)--(2.6000,0.8000)--(2.6000,1.0000)--(2.4000,1.0000)--(2.4000,1.2000)--(2.2000,1.2000)--(2.2000,1.4000)--(2.0000,1.4000)--(2.0000,1.6000)--(1.8000,1.6000)--(1.8000,1.8000)--(1.6000,1.8000)--(1.6000,2.0000)--(1.4000,2.0000)--(1.4000,2.2000)--(1.2000,2.2000)--(1.2000,2.4000)--(1.0000,2.4000)--(1.0000,2.6000)--(1.0000,2.8000)--(1.2000,2.8000)--(1.4000,2.8000)--(1.4000,2.6000)--(1.6000,2.6000)--(1.6000,2.4000)--(1.8000,2.4000)--(1.8000,2.2000)--(2.0000,2.2000)--(2.0000,2.0000)--(2.2000,2.0000)--(2.2000,1.8000)--(2.4000,1.8000)--(2.4000,1.6000)--(2.6000,1.6000)--(2.6000,1.4000)--(2.8000,1.4000)--(2.8000,1.2000)--(3.0000,1.2000)--(3.0000,1.0000)--(3.2000,1.0000)--(3.2000,0.8000)--(3.4000,0.8000)--(3.4000,0.6000)--(3.6000,0.6000)--(3.6000,0.4000)--(3.4000,0.4000)--(3.4000,0.2000)--(3.2000,0.2000);
\draw [gray!10](1.0000,2.4000)--(1.0000,2.8000);
\draw [gray!10](1.2000,2.2000)--(1.2000,2.8000);
\draw [gray!10](1.4000,2.0000)--(1.4000,2.8000);
\draw [gray!10](1.6000,1.8000)--(1.6000,2.6000);
\draw [gray!10](1.8000,1.6000)--(1.8000,2.4000);
\draw [gray!10](2.0000,1.4000)--(2.0000,2.2000);
\draw [gray!10](2.2000,1.2000)--(2.2000,2.0000);
\draw [gray!10](2.4000,1.0000)--(2.4000,1.8000);
\draw [gray!10](2.6000,0.8000)--(2.6000,1.6000);
\draw [gray!10](2.8000,0.6000)--(2.8000,1.4000);
\draw [gray!10](3.0000,0.4000)--(3.0000,1.2000);
\draw [gray!10](3.2000,0.2000)--(3.2000,1.0000);
\draw [gray!10](3.4000,0.2000)--(3.4000,0.8000);
\draw [gray!10](3.2000,0.2000)--(3.4000,0.2000);
\draw [gray!10](3.0000,0.4000)--(3.6000,0.4000);
\draw [gray!10](2.8000,0.6000)--(3.6000,0.6000);
\draw [gray!10](2.6000,0.8000)--(3.4000,0.8000);
\draw [gray!10](2.4000,1.0000)--(3.2000,1.0000);
\draw [gray!10](2.2000,1.2000)--(3.0000,1.2000);
\draw [gray!10](2.0000,1.4000)--(2.8000,1.4000);
\draw [gray!10](1.8000,1.6000)--(2.6000,1.6000);
\draw [gray!10](1.6000,1.8000)--(2.4000,1.8000);
\draw [gray!10](1.4000,2.0000)--(2.2000,2.0000);
\draw [gray!10](1.2000,2.2000)--(2.0000,2.2000);
\draw [gray!10](1.0000,2.4000)--(1.8000,2.4000);
\draw [gray!10](1.0000,2.6000)--(1.6000,2.6000);
\draw [black](3.2000,0.2000)--(3.2000,0.4000)--(3.0000,0.4000)--(3.0000,0.6000)--(2.8000,0.6000)--(2.8000,0.8000)--(2.6000,0.8000)--(2.6000,1.0000)--(2.4000,1.0000)--(2.4000,1.2000)--(2.2000,1.2000)--(2.2000,1.4000)--(2.0000,1.4000)--(2.0000,1.6000)--(1.8000,1.6000)--(1.8000,1.8000)--(1.6000,1.8000)--(1.6000,2.0000)--(1.4000,2.0000)--(1.4000,2.2000)--(1.2000,2.2000)--(1.2000,2.4000)--(1.0000,2.4000)--(1.0000,2.6000)--(1.0000,2.8000)--(1.2000,2.8000)--(1.4000,2.8000)--(1.4000,2.6000)--(1.6000,2.6000)--(1.6000,2.4000)--(1.8000,2.4000)--(1.8000,2.2000)--(2.0000,2.2000)--(2.0000,2.0000)--(2.2000,2.0000)--(2.2000,1.8000)--(2.4000,1.8000)--(2.4000,1.6000)--(2.6000,1.6000)--(2.6000,1.4000)--(2.8000,1.4000)--(2.8000,1.2000)--(3.0000,1.2000)--(3.0000,1.0000)--(3.2000,1.0000)--(3.2000,0.8000)--(3.4000,0.8000)--(3.4000,0.6000)--(3.6000,0.6000);
\draw (1.0000,2.8000)--(1.0000,-0.2000);
\draw (1.0000,2.8000)--(4.0000,2.8000);
\draw (2.5,-1) node [] {DSPPb $\sim \frac{0.138}n e^{2.77 \sqrt{n}}$};
\end{scope}
\begin{scope}[xshift=360, yshift=0]
\fill [gray!30](3.2000,0.2000)--(3.2000,0.4000)--(3.0000,0.4000)--(3.0000,0.6000)--(2.8000,0.6000)--(2.8000,0.8000)--(2.6000,0.8000)--(2.6000,1.0000)--(2.4000,1.0000)--(2.4000,1.2000)--(2.2000,1.2000)--(2.2000,1.4000)--(2.0000,1.4000)--(2.0000,1.6000)--(1.8000,1.6000)--(1.8000,1.8000)--(1.6000,1.8000)--(1.6000,2.0000)--(1.4000,2.0000)--(1.4000,2.2000)--(1.2000,2.2000)--(1.2000,2.4000)--(1.0000,2.4000)--(1.0000,2.6000)--(1.2000,2.6000)--(1.2000,2.8000)--(1.4000,2.8000)--(1.4000,2.6000)--(1.6000,2.6000)--(1.6000,2.4000)--(1.8000,2.4000)--(1.8000,2.2000)--(2.0000,2.2000)--(2.0000,2.0000)--(2.2000,2.0000)--(2.2000,1.8000)--(2.4000,1.8000)--(2.4000,1.6000)--(2.6000,1.6000)--(2.6000,1.4000)--(2.8000,1.4000)--(2.8000,1.2000)--(3.0000,1.2000)--(3.0000,1.0000)--(3.2000,1.0000)--(3.2000,0.8000)--(3.4000,0.8000)--(3.4000,0.6000)--(3.6000,0.6000)--(3.6000,0.4000)--(3.4000,0.4000)--(3.4000,0.2000)--(3.2000,0.2000);
\draw [gray!10](1.0000,2.4000)--(1.0000,2.6000);
\draw [gray!10](1.2000,2.2000)--(1.2000,2.8000);
\draw [gray!10](1.4000,2.0000)--(1.4000,2.8000);
\draw [gray!10](1.6000,1.8000)--(1.6000,2.6000);
\draw [gray!10](1.8000,1.6000)--(1.8000,2.4000);
\draw [gray!10](2.0000,1.4000)--(2.0000,2.2000);
\draw [gray!10](2.2000,1.2000)--(2.2000,2.0000);
\draw [gray!10](2.4000,1.0000)--(2.4000,1.8000);
\draw [gray!10](2.6000,0.8000)--(2.6000,1.6000);
\draw [gray!10](2.8000,0.6000)--(2.8000,1.4000);
\draw [gray!10](3.0000,0.4000)--(3.0000,1.2000);
\draw [gray!10](3.2000,0.2000)--(3.2000,1.0000);
\draw [gray!10](3.4000,0.2000)--(3.4000,0.8000);
\draw [gray!10](3.2000,0.2000)--(3.4000,0.2000);
\draw [gray!10](3.0000,0.4000)--(3.6000,0.4000);
\draw [gray!10](2.8000,0.6000)--(3.6000,0.6000);
\draw [gray!10](2.6000,0.8000)--(3.4000,0.8000);
\draw [gray!10](2.4000,1.0000)--(3.2000,1.0000);
\draw [gray!10](2.2000,1.2000)--(3.0000,1.2000);
\draw [gray!10](2.0000,1.4000)--(2.8000,1.4000);
\draw [gray!10](1.8000,1.6000)--(2.6000,1.6000);
\draw [gray!10](1.6000,1.8000)--(2.4000,1.8000);
\draw [gray!10](1.4000,2.0000)--(2.2000,2.0000);
\draw [gray!10](1.2000,2.2000)--(2.0000,2.2000);
\draw [gray!10](1.0000,2.4000)--(1.8000,2.4000);
\draw [gray!10](1.0000,2.6000)--(1.6000,2.6000);
\draw [black](3.2000,0.2000)--(3.2000,0.4000)--(3.0000,0.4000)--(3.0000,0.6000)--(2.8000,0.6000)--(2.8000,0.8000)--(2.6000,0.8000)--(2.6000,1.0000)--(2.4000,1.0000)--(2.4000,1.2000)--(2.2000,1.2000)--(2.2000,1.4000)--(2.0000,1.4000)--(2.0000,1.6000)--(1.8000,1.6000)--(1.8000,1.8000)--(1.6000,1.8000)--(1.6000,2.0000)--(1.4000,2.0000)--(1.4000,2.2000)--(1.2000,2.2000)--(1.2000,2.4000)--(1.0000,2.4000)--(1.0000,2.6000)--(1.2000,2.6000)--(1.2000,2.8000)--(1.4000,2.8000)--(1.4000,2.6000)--(1.6000,2.6000)--(1.6000,2.4000)--(1.8000,2.4000)--(1.8000,2.2000)--(2.0000,2.2000)--(2.0000,2.0000)--(2.2000,2.0000)--(2.2000,1.8000)--(2.4000,1.8000)--(2.4000,1.6000)--(2.6000,1.6000)--(2.6000,1.4000)--(2.8000,1.4000)--(2.8000,1.2000)--(3.0000,1.2000)--(3.0000,1.0000)--(3.2000,1.0000)--(3.2000,0.8000)--(3.4000,0.8000)--(3.4000,0.6000)--(3.6000,0.6000);
\draw (1.0000,2.8000)--(1.0000,-0.2000);
\draw (1.0000,2.8000)--(4.0000,2.8000);
\draw (2.5,-1) node [] {DSPPc $\sim \frac{0.174}n e^{2.77 \sqrt{n}}$};
\end{scope}
\end{tikzpicture}
\end{align*}

\centerline{Fig. 2. Asymptotic formulas for various kinds of defective plane partitions.}
\medskip

The skew doubled shifted plane partitions have some nice properties.
(1) The order of the asymptotic formula depends only on the width of the doubled shifted plane partition, not on the profile (the skew zone) itself. 
The similar property holds for ordinary plane partitions. 
We may think that this is natural by intuition. 
However, the cylindric partitions (CP and SCP) show that this is not always the case. 
(2) We empirically observe that, the asymptotic formula for doubled shifted plane partitions gives already good 
approximative values for the numbers of DSPP, even for small integer $n$.
While the asymptotic formula for PP needs a large integer $n$
to produce an acceptable value, as shown in the following table.
$$
\begin{tabular}{|c | r  r  r  r|}
	\hline
	$n$      & 5   & 10 & 15 & 20      \\
	\hline
	\#PPa  & 21   & 319 & 3032 &  22371  \\
	Asymptotic & $\sim$ 319& $\sim$ 2449 & $\sim$ 17062 & $\sim$ 103112    \\
	\hline
	\#CPa & 7   & 42 & 176 & 627   \\
	Asymptotic  & $\sim$ 8& $\sim$ 48 & $\sim$ 198 & $\sim$ 692    \\
	\hline
	\#SCPa & 4   & 17 & 56 & 161   \\
	Asymptotic    & $\sim$ 4& $\sim$ 18 & $\sim$ 59 & $\sim$ 169    \\
	\hline
	\#DSPPa & 9   & 64 & 314 & 1244   \\
	Asymptotic   & $\sim$ 10& $\sim$ 70 & $\sim$ 336 & $\sim$ 1325    \\
	\hline
\end{tabular}
$$

It is amazing how the orders of the asymptotic formulas for CP and SCP differ. 
For the CP,
the exponents of $n$ in the denominator are always $1$, but the
exponents of $e$ differ. While for the SCP, the exponents of $n$
differ, but the exponents of $n$ are constant. Let us summarize these observations in the following table.

$$
\begin{tabular}{ |c| c |c| c |}
	\hline
	& $n^{\text{Const}}$ & $e^{\text{Const}\sqrt{n}}$ & Fast Convergence \\
	\hline
	PP & Yes & Yes & No \\
	CP & Yes & No & Yes \\
	SCP & No & Yes & Yes \\
	DSPP & Yes & Yes & Yes\\
	\hline
\end{tabular}
$$

\medskip
The rest of the paper is arranged in the following way. 
In Section~\ref{sec:main-results} we establish the complete summation formula
for Schur processes.
The basic notation and the trace generating functions
for skew doubled shifted plane partitions can be found in
Section~\ref{sec:SkRPP}. 
After recalling some useful theorems for asymptotic formulas in Section~\ref{sec:Asym}, we compute the generating functions and the asymptotic formulas
for the numbers of skew doubled shifted plane partitions and symmetric cylindric partitions, in Sections \ref{sec:gfRPP} and \ref{sec:gfSyCPP}, respectively.

\section{Summation formulas for skew Schur functions}\label{sec:main-results} 

For the definitions and basic properties of skew Schur functions we refer to 
the books \cite{Macdonald1995,StanleyEC2,StanleyEC1}.
Let 
\begin{align*}
	\Psi(X,Y) &= \prod_{i,j} (1-x_i y_j)^{-1},\\ 
    \Phi(X) &= \prod_i (1-x_i)^{-1} \prod_{i<j} (1-x_i x_j)^{-1},
\end{align*}
where $X=\{x_1, x_2, \ldots\}$ and $Y=\{y_1, y_2, \ldots \}$ are two alphabets.
Each $\pm 1$-sequence $\delta=(\delta_i)_{1\leq i\leq h}$ of
length $h\geq 1$ is called a {\it profile}. Let $|\delta|_1$ 
(resp. $|\delta|_{-1}$) be the number of letters $1$ (resp.  $-1$) in $\delta$.
Therefore, $h=|\delta|_1+|\delta|_{-1}$.
The following theorem contains three fundamental summation formulas
for skew Schur functions, namely,
the {\it open summation formula} \eqref{eq:MainOpen},
the {\it cylindric summation formula} \eqref{eq:MainCylinder}
and
the {\it complete summation formula} \eqref{eq:MainComplete}.
The open and cylindric formulas 
have already been derived by Okounkov, Reshetikhin, Borodin, Corteel, Savelief, Vuleti\'c and Langer \cite{Borodin2007,Borodin2011,CorteelSaveliefVuletic,  Langer2013A, Langer2013B,Okounkov2001,Okounkov2003, Vuletic2007,Vuletic2009}.
For convenience, they are also reproduced next.
Our main contribution is the complete summation formula \eqref{eq:MainComplete}.

\begin{thm}\label{th:MainZ}
Let $h$ be a positive integer,
	$\delta=(\delta_i)_{1\leq i\leq h}$ be a profile of length $h$, 
	and $Z^1,\ldots,Z^h$ be a sequence of alphabets.
	Write $\mathbf{Z}:=\mathbf{Z}^\delta_-+\mathbf{Z}^\delta_+=\sum_{1\leq i\leq h} Z^i$ as the union of $Z^1, Z^2, \ldots, Z^{h}$, $\mathbf{Z}^\delta_- := \sum_{i:\, \delta_i=-1} Z^i$,
	and $\mathbf{Z}^\delta_+ := \sum_{i:\, \delta_i=1} Z^i$.
	For a sequence of partitions $\la^0,\la^1,\la^2,\ldots,\la^h$, let $s^{\delta}_i$ denote the skew Schur function $s_{\la^{i}/\la^{i-1}}$ if $\delta_i=1$ and $s_{\la^{i-1}/\la^{i}}$ if $\delta_i=-1$. 
We have
\begin{align} 
	\sum_{\la^1, \ldots, \la^{h-1}} 
	\prod_{i=1}^{h}
s^{\delta}_i (Z^i) 
	&=  
\prod_{\substack{1\leq i<j \leq h  \\ \delta_i > \delta_j   } } \Psi(Z^i,Z^j) 
\times
	\sum_{ \gamma}   s_{\la^0/\gamma} (\mathbf{Z}^\delta_{-}) 
	s_{\la^{h}/\gamma}(\mathbf{Z}^\delta_+); 
	\label{eq:MainOpen}\\
	\sum_{\la^0, \ldots, \la^{h}} z^{|\la^h|}
	\prod_{i=1}^{h}
s^{\delta}_i (Z^i) 
	&=  
\prod_{\substack{1\leq i<j \leq h  \\ \delta_i > \delta_j   } } \Psi(Z^i,Z^j) 
\times
	\Phi(\mathbf{Z}^\delta_-)  \prod_{k\geq 1} \frac{\Phi(z^k\mathbf{Z})}{1-z^k};
	\label{eq:MainComplete}\\
	\sum_{\substack{\la^0, \ldots, \la^h\\ \la^0=\la^h}} 
z^{|\la^h|} 
	\prod_{i=1}^{h}
s^{\delta}_i (Z^i)  
	&=\prod_{\substack{1\leq i<j \leq h  \\ \delta_i > \delta_j   } } \Psi(Z^i,Z^j)
\times
	\prod_{k\geq 1} \frac{\Psi(z^k\mathbf{Z}^\delta_-, \mathbf{Z}^\delta_+)}{1-z^k}. 
\label{eq:MainCylinder}
\end{align}
\end{thm}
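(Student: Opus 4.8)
The open and cylindric summation formulas \eqref{eq:MainOpen} and \eqref{eq:MainCylinder} are classical---both come, in the usual way, from iterating the commutation relation of Schur's vertex operators, equivalently from the skew Cauchy identity---so I would take them as given and concentrate on the complete formula \eqref{eq:MainComplete}. The plan is to obtain \eqref{eq:MainComplete} from \eqref{eq:MainOpen} simply by summing over the two free end partitions. Writing $A:=\mathbf{Z}^\delta_-$ and $B:=\mathbf{Z}^\delta_+$, one multiplies both sides of \eqref{eq:MainOpen} by $z^{|\la^h|}$ and sums over $\la^0$ and $\la^h$; the left-hand side becomes exactly the left-hand side of \eqref{eq:MainComplete}, and since the prefactor $\prod_{1\le i<j\le h,\ \delta_i>\delta_j}\Psi(Z^i,Z^j)$ is untouched, it remains only to evaluate
\[
\Sigma\ :=\ \sum_{\la^0,\la^h} z^{|\la^h|}\sum_{\gamma} s_{\la^0/\gamma}(A)\, s_{\la^h/\gamma}(B).
\]
Throughout, these identities are read in the standard formal-power-series sense: after grading by the exponent of $z$ (and by total degree in the alphabet variables), every infinite sum that appears has only finitely many terms in each degree, which is what makes the rearrangements below legitimate.

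The computation of $\Sigma$ uses only three classical facts about (skew) Schur functions (see \cite{Macdonald1995,StanleyEC2}): (i) $\sum_{\la} s_{\la/\mu}(X)=\Phi(X)\sum_{\nu} s_{\mu/\nu}(X)$, a specialization of the Littlewood--Cauchy identities (note $\Phi(X)=\sum_\la s_\la(X)$); (ii) the alphabet-addition rule $s_{\la/\nu}(X+Y)=\sum_{\mu} s_{\mu/\nu}(X)\, s_{\la/\mu}(Y)$; and (iii) homogeneity, $s_{\la/\gamma}(zC)=z^{|\la|-|\gamma|} s_{\la/\gamma}(C)$. First I would carry out the $\la^0$-sum in $\Sigma$ by (i), which turns $\sum_{\la^0} s_{\la^0/\gamma}(A)$ into $\Phi(A)\sum_{\gamma'} s_{\gamma/\gamma'}(A)$; the resulting inner sum $\sum_{\gamma}\bigl(\sum_{\gamma'} s_{\gamma/\gamma'}(A)\bigr) s_{\la^h/\gamma}(B)$ collapses by (ii) to $\sum_{\gamma'} s_{\la^h/\gamma'}(A+B)$. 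Hence $\Sigma=\Phi(\mathbf{Z}^\delta_-)\,T(\mathbf{Z})$, where
\[
T(C)\ :=\ \sum_{\la} z^{|\la|}\sum_{\gamma} s_{\la/\gamma}(C).
\]

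The heart of the matter is a self-similarity of $T$. Exchanging the two sums defining $T(C)$, rewriting $z^{|\la|} s_{\la/\gamma}(C)=z^{|\gamma|} s_{\la/\gamma}(zC)$ by (iii), and then applying (i) once more, one gets the dilation functional equation
\[
T(C)\ =\ \Phi(zC)\cdot T(zC).
\]
Iterating gives $T(C)=\bigl(\prod_{k=1}^{N}\Phi(z^kC)\bigr)\, T(z^NC)$ for every $N\ge 1$. Now $s_{\la/\gamma}(z^NC)=z^{N(|\la|-|\gamma|)} s_{\la/\gamma}(C)$, so the term of $T(z^NC)$ indexed by $\gamma\subseteq\la$ carries $z^{|\la|+N(|\la|-|\gamma|)}$, which has degree at least $N+1$ unless $\gamma=\la$; therefore $T(z^NC)\equiv\sum_{\la} z^{|\la|}=\prod_{k\ge 1}(1-z^k)^{-1}\pmod{z^{N+1}}$. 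Letting $N\to\infty$ yields $T(C)=\prod_{k\ge 1}\Phi(z^kC)/(1-z^k)$, whence $\Sigma=\Phi(\mathbf{Z}^\delta_-)\prod_{k\ge 1}\Phi(z^k\mathbf{Z})/(1-z^k)$; reinstating the $\Psi$-prefactor reproduces \eqref{eq:MainComplete} verbatim.

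The only step that is not bookkeeping is the passage $N\to\infty$: one must check that the infinite product $\prod_{k\ge 1}\Phi(z^kC)$ and all the rearranged infinite sums genuinely converge, and this is precisely the place where the formal variable $z$ is indispensable, since without it the sums defining $T(C)$ and $\Sigma$ diverge. I expect this to be the main (mild) obstacle; everything else is a short chain of standard identities. If one prefers a proof that does not invoke \eqref{eq:MainOpen}, the same argument runs directly in the fermionic Fock space: summing over the two boundary partitions corresponds to pairing the transfer operator with the covector $\sum_{\mu}\langle v_{\mu}|$ and the vector $\sum_{\mu} z^{|\mu|}|v_{\mu}\rangle$, and $T(C)=\Phi(zC)T(zC)$ is exactly the operator identity that evaluates that matrix element.
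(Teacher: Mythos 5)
Your proposal is correct and follows essentially the same route as the paper: the authors likewise treat \eqref{eq:MainOpen} and \eqref{eq:MainCylinder} as known, and obtain \eqref{eq:MainComplete} by summing the open formula over the two boundary partitions, which is exactly your $\Sigma=\Phi(\mathbf{Z}^\delta_-)\,T(\mathbf{Z})$ reduction (their Lemma \ref{th:SchurSum:s2}) combined with the dilation functional equation $T(C)=\Phi(zC)T(zC)$ and its iteration (their Lemma \ref{th:SchurSum:s1}). Your justification of the passage $N\to\infty$ via the grading by powers of $z$ is in fact slightly more explicit than the paper's, which simply evaluates $F(\emptyset)=\prod_{k\ge 1}(1-z^k)^{-1}$.
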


\goodbreak

\medskip

Actually, Theorem \ref{th:MainZ} is equivalent to the following Theorem \ref{th:SchurSum:sn},
since with 
$X^{i-1}=\emptyset$ and $Y^{i-1}=Z^{i}$ if $\delta_i=1$; 
$Y^{i-1}=\emptyset$ and $X^{i-1}=Z^i$ if $\delta_i=-1$,  
we recover
Theorem \ref{th:MainZ}.
Therefore we just need to prove Theorem \ref{th:SchurSum:sn}.

\begin{thm}\label{th:SchurSum:sn}
Suppose that $X^0, X^1, \ldots, X^{h-1}$ and $Y^0, Y^1, \ldots, Y^{h-1}$ are 
	$2h$ alphabets. Let $\mathbf{X}=\sum_{i=0}^{h-1}X^i$ and $\mathbf{Y}=\sum_{i=0}^{h-1}Y^i$ be the union of $X^0, X^1, \ldots, X^{h-1}$ and $Y^0, Y^1, \ldots, Y^{h-1}$ respectively.  Then we have  

\begin{align} 
	&\sum_{\la^1, \ldots, \la^{h-1}} 
	\sum_{\mu^0, \ldots, \mu^{h-1}} 
	\prod_{i=0}^{h-1}
s_{\la^i/\mu^i} (X^i)
	s_{\la^{i+1}/\mu^i}(Y^i) 
\label{eq:SchurSum:sn:open}
\\
&\qquad =  
\prod_{0\leq i<j \leq {h-1}}\Psi(Y^i,X^j)
\sum_{ \gamma}   s_{\la^0/\gamma} (\mathbf{X}) s_{\la^{h}/\gamma}(\mathbf{Y}).  \nonumber\\
&\sum_{\la^0, \ldots, \la^h} 
	\sum_{\mu^0, \ldots, \mu^{h-1}} 
z^{|\la^h|} 
	\prod_{i=0}^{h-1}
s_{\la^i/\mu^i} (X^i)
	s_{\la^{i+1}/\mu^i}(Y^i) 
\label{eq:SchurSum:sn}
\\
&\qquad =  
\prod_{0\leq i<j \leq {h-1}}\Psi(Y^i,X^j) \times
\Phi(\mathbf{X})  \prod_{k\geq 1} 
	\frac{\Phi(z^k(\mathbf{X}+\mathbf{Y}))}{1-z^k}.\nonumber\\
	&  \sum_{\substack{\la^0, \ldots, \la^h\\ \la^0=\la^h}} 
	\sum_{\mu^0, \ldots, \mu^{h-1}} 
z^{|\la^h|} 
	\prod_{i=0}^{h-1}
s_{\la^i/\mu^i} (X^i)
	s_{\la^{i+1}/\mu^i}(Y^i) 
\label{eq:SchurSum:sn:cylinder}
\\
&\qquad =  
\prod_{0\leq i<j \leq {h-1}}\Psi(Y^i,X^j) \times
\prod_{k\geq 1} \frac{\Psi(z^k\mathbf{X}, \mathbf{Y})}{1-z^k}. \nonumber
\end{align}
\end{thm}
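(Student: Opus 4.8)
\medskip
\noindent\textbf{Proof proposal.}
I would prove the three formulas of Theorem~\ref{th:SchurSum:sn} simultaneously via the half--vertex--operator formalism. On the vector space with orthonormal basis $\{|\lambda\rangle\}$ indexed by partitions, set $\Gamma_-(X)|\mu\rangle:=\sum_{\lambda}s_{\lambda/\mu}(X)|\lambda\rangle$ and $\Gamma_+(X):=\Gamma_-(X)^{*}$ (the transpose with respect to the basis $\{|\lambda\rangle\}$), so that $\langle\lambda|\Gamma_-(X)|\mu\rangle=\langle\mu|\Gamma_+(X)|\lambda\rangle=s_{\lambda/\mu}(X)$. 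From the branching rule $s_{\lambda/\mu}(X+Y)=\sum_\nu s_{\lambda/\nu}(X)s_{\nu/\mu}(Y)$ and the skew Cauchy identity one obtains, elementarily: $\Gamma_\pm(X)\Gamma_\pm(Y)=\Gamma_\pm(X+Y)$; the commutation rule $\Gamma_+(X)\Gamma_-(Y)=\Psi(X,Y)\,\Gamma_-(Y)\Gamma_+(X)$ (recall $\Psi(X,Y)=\Psi(Y,X)$); and, for the degree operator $L_0|\lambda\rangle=|\lambda|\,|\lambda\rangle$, the scaling relations $z^{L_0}\Gamma_-(X)=\Gamma_-(zX)\,z^{L_0}$ and $\Gamma_+(X)\,z^{L_0}=z^{L_0}\,\Gamma_+(zX)$.

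By orthonormality the sums over the $\mu^{i}$ and over the internal $\lambda^{i}$ telescope into a matrix product, so the left--hand side common to \eqref{eq:SchurSum:sn:open}, \eqref{eq:SchurSum:sn} and \eqref{eq:SchurSum:sn:cylinder} equals the operator word $\Gamma_-(Y^{h-1})\Gamma_+(X^{h-1})\cdots\Gamma_-(Y^{0})\Gamma_+(X^{0})$, evaluated as $\langle\lambda^{h}|\cdots|\lambda^{0}\rangle$ for \eqref{eq:SchurSum:sn:open}, as $\operatorname{Tr}\big(z^{L_0}\,(\cdots)\big)$ for \eqref{eq:SchurSum:sn:cylinder}, and as $\langle L_z|\cdots|L\rangle$ for \eqref{eq:SchurSum:sn}, where $\langle L_z|:=\sum_\lambda z^{|\lambda|}\langle\lambda|$ and $|L\rangle:=\sum_\mu|\mu\rangle$. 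Normal--ordering, i.e.\ pushing each $\Gamma_+(X^{j})$ to the right past each $\Gamma_-(Y^{i})$ with $i<j$ and collecting, produces the prefactor $\prod_{0\le i<j\le h-1}\Psi(Y^{i},X^{j})$ and collapses the word to $\Gamma_-(\mathbf Y)\Gamma_+(\mathbf X)$. Hence each formula reduces to evaluating $\langle\bullet|\Gamma_-(\mathbf Y)\Gamma_+(\mathbf X)|\bullet\rangle$ in the relevant pairing: for \eqref{eq:SchurSum:sn:open} this is immediately $\sum_\gamma s_{\lambda^{h}/\gamma}(\mathbf Y)\,s_{\lambda^{0}/\gamma}(\mathbf X)$; for \eqref{eq:SchurSum:sn:cylinder} cyclicity of the trace and the scaling relations give the recursion $\operatorname{Tr}(z^{L_0}\Gamma_-(\mathbf Y)\Gamma_+(z^{k}\mathbf X))=\Psi(z^{k+1}\mathbf X,\mathbf Y)\,\operatorname{Tr}(z^{L_0}\Gamma_-(\mathbf Y)\Gamma_+(z^{k+1}\mathbf X))$, and iterating (with $\Gamma_+(z^{N}\mathbf X)\to\mathrm{Id}$ and $\operatorname{Tr}(z^{L_0}\Gamma_-(\mathbf Y))=\prod_{k\ge1}(1-z^{k})^{-1}$) recovers the known cylindric formula.

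The genuinely new content is \eqref{eq:SchurSum:sn}, whose evaluation rests on the identity
\[
\Gamma_+(X)\,|L\rangle=\Phi(X)\,\Gamma_-(X)\,|L\rangle,
\qquad\text{equivalently}\qquad
\sum_{\lambda\supseteq\mu}s_{\lambda/\mu}(X)=\Phi(X)\sum_{\nu\subseteq\mu}s_{\mu/\nu}(X)\ \text{ for every }\mu,
\]
the skew refinement of $\sum_\lambda s_\lambda=\Phi$ (it follows from the latter by applying the adjoint of multiplication by $s_\mu$; see \cite{Macdonald1995,StanleyEC2}), whose transpose reads $\langle L|\Gamma_-(X)=\Phi(X)\langle L|\Gamma_+(X)$. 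One application gives $\Gamma_-(\mathbf Y)\Gamma_+(\mathbf X)|L\rangle=\Phi(\mathbf X)\,\Gamma_-(\mathbf Y)\Gamma_-(\mathbf X)|L\rangle=\Phi(\mathbf X)\,\Gamma_-(W)|L\rangle$ with $W:=\mathbf X+\mathbf Y$, reducing matters to
\[
\langle L_z|\,\Gamma_-(W)\,|L\rangle=\prod_{k\ge1}\frac{\Phi(z^{k}W)}{1-z^{k}}.
\]
To prove this I would write $\langle L_z|=\langle L|\,z^{L_0}$ and then repeatedly alternate (a) moving $z^{L_0}$ through $\Gamma_\mp$, which replaces the argument $z^{m}W$ by $z^{m+1}W$, with (b) the identity above (on the right, at $|L\rangle$) or its transpose (on the left, at $\langle L|$), each application peeling off one factor $\Phi(z^{k}W)$. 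After $N$ rounds the residual is $\big(\prod_{k=1}^{N}\Phi(z^{k}W)\big)$ times an expression $\langle L|(\cdots)|L\rangle$ whose only surviving operator is a single $\Gamma_\pm(z^{N'}W)$ with $N'\ge N$; since $\Gamma_\pm(z^{N'}W)\to\mathrm{Id}$ coefficientwise in $z$, this residual tends to $\big(\prod_{k\ge1}\Phi(z^{k}W)\big)\langle L|z^{L_0}|L\rangle=\prod_{k\ge1}\Phi(z^{k}W)\cdot\prod_{k\ge1}(1-z^{k})^{-1}$. Reinstating the normal--ordering prefactor $\prod_{0\le i<j\le h-1}\Psi(Y^{i},X^{j})$ and the factor $\Phi(\mathbf X)$ yields exactly the right--hand side of \eqref{eq:SchurSum:sn}. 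By the equivalence noted before the statement this also gives Theorem~\ref{th:MainZ}, and then Theorem~\ref{th:gfRPP} and the remaining generating functions follow by specialization.

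The hard part will be this last step for \eqref{eq:SchurSum:sn}: establishing $\sum_{\lambda\supseteq\mu}s_{\lambda/\mu}(X)=\Phi(X)\sum_{\nu\subseteq\mu}s_{\mu/\nu}(X)$ in exactly this skew form (rather than merely $\sum_\lambda s_\lambda=\Phi$), and above all making the infinite telescoping rigorous — one must argue coefficient by coefficient in $z$, checking that after $N$ peeling rounds the discrepancy between the residual and $\langle L|z^{L_0}|L\rangle$ is divisible by $z^{N}$, so that every coefficient of $z^{n}$ has stabilized once $N>n$. The remaining ingredients (the normal--ordering step and the open and cylindric evaluations) are either immediate or are the classical Schur--process computations of Okounkov--Reshetikhin \cite{Okounkov2001} and Borodin \cite{Borodin2007}.
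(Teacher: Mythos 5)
Your proposal is correct and is essentially the paper's proof written in the half--vertex--operator dictionary: the normal-ordering commutation $\Gamma_+(X)\Gamma_-(Y)=\Psi(X,Y)\Gamma_-(Y)\Gamma_+(X)$ is Macdonald's identity \eqref{eq:SchurSum:Macdonald:p93A} applied iteratively (the paper's proof of \eqref{eq:SchurSum:sn:open}), and the step you flag as the hard part, $\sum_{\lambda\supseteq\mu}s_{\lambda/\mu}(X)=\Phi(X)\sum_{\nu\subseteq\mu}s_{\mu/\nu}(X)$, is exactly Macdonald's \eqref{eq:SchurSum:Macdonald:p93B} (p.~93, ex.~27(3)), which the paper simply cites. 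Your evaluation $\langle L_z|\Gamma_-(W)|L\rangle=\prod_{k\geq 1}\Phi(z^kW)/(1-z^k)$ via telescoping is the paper's Lemma \ref{th:SchurSum:s1} (functional equation $F(X)=\Phi(zX)F(zX)$, iterated coefficientwise in $z$), and your reduction $\Gamma_-(\mathbf{Y})\Gamma_+(\mathbf{X})|L\rangle=\Phi(\mathbf{X})\Gamma_-(\mathbf{X}+\mathbf{Y})|L\rangle$ is Lemma \ref{th:SchurSum:s2}, so every ingredient you need is already in place.
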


\medskip

To give the proof of Theorem \ref{th:SchurSum:sn}, let us recall the following two formulas stated in 
Macdonald's book \cite{Macdonald1995} (see p. 93, ex. 26(1) and ex. 27(3)).
\begin{align}	
	\sum_{\rho} s_{\rho/\la}(X) s_{\rho/\mu}(Y)
	&= \Psi(X,Y) \sum_{\rho} s_{\la/\rho}(Y) s_{\mu/\rho}(X), 
	\label{eq:SchurSum:Macdonald:p93A} \\
	\sum_\rho s_{\rho/\nu} &= \Phi(X) \sum_\rho s_{\nu/\rho}.
	\label{eq:SchurSum:Macdonald:p93B}
\end{align}

First we use \eqref{eq:SchurSum:Macdonald:p93B} to prove some lemmas.

\begin{lem}\label{th:SchurSum:s1}
We have
\begin{align}
\sum_{\mu, \tau} z^{|\mu|} s_{\mu/\tau}(X) &=
\prod_{k\geq 1} \frac{\Phi(z^kX)}{1-z^k}. \label{eq:SchurSum:s1}
\end{align}
\end{lem}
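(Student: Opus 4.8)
The plan is to introduce the quantity
\[
\Xi(X):=\sum_{\mu,\tau} z^{|\mu|}\, s_{\mu/\tau}(X),
\]
regarded as a formal power series in $z$ whose coefficients are symmetric functions in $X$ (this is well defined because, for each fixed integer $n$, only finitely many pairs $(\mu,\tau)$ with $|\mu|=n$ contribute to the coefficient of $z^{n}$). I would first derive a functional equation expressing $\Xi(X)$ through the same quantity for the rescaled alphabet $zX$, and then iterate it and pass to a $z$-adic limit. \emph{Step 1: absorbing the weight.} Since $s_{\mu/\tau}$ is homogeneous of degree $|\mu|-|\tau|$, we have $z^{|\mu|}s_{\mu/\tau}(X)=z^{|\tau|}s_{\mu/\tau}(zX)$, hence $\Xi(X)=\sum_{\tau} z^{|\tau|}\sum_{\mu} s_{\mu/\tau}(zX)$. \emph{Step 2: one application of Macdonald's identity.} Applying \eqref{eq:SchurSum:Macdonald:p93B} with $\nu=\tau$ and alphabet $zX$ to the inner sum gives $\sum_{\mu} s_{\mu/\tau}(zX)=\Phi(zX)\sum_{\rho} s_{\tau/\rho}(zX)$; substituting and relabelling $(\tau,\rho)\mapsto(\mu,\tau)$ yields the functional equation
\[
\Xi(X)=\Phi(zX)\,\Xi(zX).
\]

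Iterating this $N$ times gives $\Xi(X)=\bigl(\prod_{k=1}^{N}\Phi(z^{k}X)\bigr)\,\Xi(z^{N}X)$ for every $N\ge 1$. Each factor satisfies $\Phi(z^{k}X)\equiv 1 \pmod{z^{k}}$, so the partial products converge $z$-adically to $\prod_{k\ge 1}\Phi(z^{k}X)$. On the other hand $s_{\mu/\tau}(z^{N}X)=z^{N(|\mu|-|\tau|)}s_{\mu/\tau}(X)$ is divisible by $z^{N+1}$ whenever $\tau\subsetneq\mu$, whereas $s_{\mu/\mu}(z^{N}X)=1$; therefore $\Xi(z^{N}X)\equiv\sum_{\mu} z^{|\mu|}\pmod{z^{N+1}}$, and $\Xi(z^{N}X)\to\sum_{\mu} z^{|\mu|}=\prod_{k\ge 1}(1-z^{k})^{-1}$ as $N\to\infty$. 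Letting $N\to\infty$ in the iterated identity gives
\[
\Xi(X)=\prod_{k\ge 1}\Phi(z^{k}X)\cdot\prod_{k\ge 1}\frac{1}{1-z^{k}}=\prod_{k\ge 1}\frac{\Phi(z^{k}X)}{1-z^{k}},
\]
which is exactly \eqref{eq:SchurSum:s1}.

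The routine inputs here are just the homogeneity of skew Schur functions and a single use of \eqref{eq:SchurSum:Macdonald:p93B}; the only point requiring genuine care is the passage to the infinite iteration. One must make sure all the manipulations stay inside the formal power series ring (so that the rearrangements of the double sums are legitimate), check that both the infinite product $\prod_{k\ge1}\Phi(z^{k}X)$ and the ``tail'' $\Xi(z^{N}X)$ stabilise modulo each power of $z$, and recognise $\Xi(\emptyset)=\sum_{\mu}z^{|\mu|}$ as Euler's partition generating function $\prod_{k\ge1}(1-z^{k})^{-1}$. I expect this bookkeeping to be the main (though mild) obstacle.
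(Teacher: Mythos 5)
Your proposal is correct and follows essentially the same route as the paper: homogeneity to absorb the weight $z^{|\mu|}$, one application of \eqref{eq:SchurSum:Macdonald:p93B} to get the functional equation $F(X)=\Phi(zX)F(zX)$, and iteration down to $F(\emptyset)=\prod_{k\ge1}(1-z^k)^{-1}$. The extra $z$-adic bookkeeping you supply is a sound (and slightly more careful) justification of the limiting step that the paper leaves implicit.
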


\begin{proof}
Let $F(X)$ be the left-hand side of \eqref{eq:SchurSum:s1}. Then, by 
\eqref{eq:SchurSum:Macdonald:p93B} we have
\begin{align*}
F(X)&=\sum_{\mu, \tau} z^{|\mu|} s_{\mu/\tau}(X) =\sum_{\tau} z^{|\tau|} \sum_\mu  s_{\mu/\tau}(zX)\\
&= \Phi(zX) \sum_{\tau} z^{|\tau|}\sum_\rho  s_{\tau/\rho}(zX)= \Phi(zX) F(zX).
\end{align*}
Hence, we obtain 
$$
\sum_{\mu, \tau} z^{|\mu|} s_{\mu/\tau}(X) =
\prod_{k\geq 1} \Phi(z^kX) \times  F(\emptyset).
$$
Since
$$
F(\emptyset)=\sum_{\mu, \tau} z^{|\mu|} s_{\mu/\tau}(\emptyset) = \sum_{\mu} z^{|\mu|} = \prod_{k\geq 1} \frac{1}{1-z^k},
$$
then \eqref{eq:SchurSum:s1} is proved.
\end{proof}

\begin{lem}\label{th:SchurSum:s2}
	We have
	\begin{equation}\label{eq:SchurSum:s2}
\sum_{\lambda, \mu, \gamma} z^{|\mu|}   s_{\mu/\gamma} (X) s_{\lambda/\gamma}(Y)
=  \Phi(Y)  \prod_{k\geq 1} \frac{\Phi(z^k(X+Y))}{1-z^k}.
\end{equation}
\end{lem}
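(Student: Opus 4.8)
The plan is to derive \eqref{eq:SchurSum:s2} from Lemma~\ref{th:SchurSum:s1} in two short steps. Write $G(X,Y)$ for the left-hand side of \eqref{eq:SchurSum:s2}. The partition $\lambda$ occurs only in the single factor $s_{\lambda/\gamma}(Y)$, so I would first sum over $\lambda$ using \eqref{eq:SchurSum:Macdonald:p93B} with $\nu=\gamma$ and alphabet $Y$ (after renaming the summation index $\lambda\mapsto\rho$), which gives $\sum_{\lambda}s_{\lambda/\gamma}(Y)=\Phi(Y)\sum_{\rho}s_{\gamma/\rho}(Y)$. Substituting this factors $\Phi(Y)$ out of $G$ and leaves
$$
G(X,Y)=\Phi(Y)\sum_{\mu,\gamma,\rho}z^{|\mu|}\,s_{\mu/\gamma}(X)\,s_{\gamma/\rho}(Y).
$$

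The second step is to carry out the sum over the intermediate partition $\gamma$ by the addition-of-alphabets identity for skew Schur functions, $\sum_{\gamma}s_{\mu/\gamma}(X)\,s_{\gamma/\rho}(Y)=s_{\mu/\rho}(X+Y)$ (see \cite{Macdonald1995}, Chapter~I). What remains is $\Phi(Y)\sum_{\mu,\rho}z^{|\mu|}s_{\mu/\rho}(X+Y)$, and the double sum is precisely the left-hand side of \eqref{eq:SchurSum:s1} for the single alphabet $X+Y$. Invoking Lemma~\ref{th:SchurSum:s1} then yields $G(X,Y)=\Phi(Y)\prod_{k\ge 1}\frac{\Phi(z^{k}(X+Y))}{1-z^{k}}$, which is the asserted identity.

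Since the computation is this short, there is no genuinely hard step; the only point that needs care is the bookkeeping of the infinite sums. All manipulations take place in the ring of symmetric functions in $X\cup Y$, and the auxiliary variable $z$ grades everything by $|\mu|$, so for each fixed power of $z$ only finitely many partitions contribute; hence pulling $\Phi(Y)$ past the summation signs and substituting \eqref{eq:SchurSum:Macdonald:p93B} inside the triple sum are legitimate.

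If one prefers not to quote the addition-of-alphabets rule, a self-contained variant in the style of the proof of Lemma~\ref{th:SchurSum:s1} is available: after the first step, use homogeneity in the form $z^{|\mu|}s_{\mu/\gamma}(X)\,s_{\gamma/\rho}(Y)=z^{|\rho|}s_{\mu/\gamma}(zX)\,s_{\gamma/\rho}(zY)$, then peel off the factors $\Phi(zX)$ and $\Phi(zY)$ by two applications of \eqref{eq:SchurSum:Macdonald:p93B} and a factor $\Psi(zX,zY)$ by one application of \eqref{eq:SchurSum:Macdonald:p93A}, combine them via the elementary identity $\Phi(zX)\Phi(zY)\Psi(zX,zY)=\Phi\bigl(z(X+Y)\bigr)$, and iterate down to the empty alphabets, where $s_{\mu/\gamma}(\emptyset)\,s_{\gamma/\rho}(\emptyset)$ forces $\mu=\gamma=\rho$ and collapses the sum to $\sum_{\mu}z^{|\mu|}=\prod_{k\ge1}(1-z^{k})^{-1}$. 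This gives the same answer but is longer, so I would present the first argument.
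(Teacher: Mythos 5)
Your argument is correct and is essentially identical to the paper's own proof: the paper likewise first sums over $\lambda$ via \eqref{eq:SchurSum:Macdonald:p93B} to extract $\Phi(Y)$, then collapses the $\gamma$-sum by the addition-of-alphabets identity $\sum_\gamma s_{\mu/\gamma}(X)s_{\gamma/\tau}(Y)=s_{\mu/\tau}(X+Y)$ (which it cites as a special case of \eqref{eq:SchurSum:sn:open}), and finally applies Lemma~\ref{th:SchurSum:s1} to the alphabet $X+Y$. The only cosmetic difference is your choice of reference for the alphabet-addition step.
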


\begin{proof}
	By \eqref{eq:SchurSum:sn:open} and Lemma \ref{th:SchurSum:s1}	we have
\begin{align*}
&\sum_{\lambda, \mu} z^{|\mu|} \sum_\gamma 
	 s_{\mu/\gamma} (X) s_{\lambda/\gamma}(Y)\\
= & \sum_{\mu} z^{|\mu|} \sum_\gamma  s_{\mu/\gamma} (X)  \sum_\lambda s_{\lambda/\gamma}(Y)\\
	= & \Phi(Y) \sum_{\mu} z^{|\mu|} \sum_\gamma  s_{\mu/\gamma} (X)  \sum_\tau s_{\gamma/\tau}(Y)\\
	= & \Phi(Y) \sum_{\mu} z^{|\mu|} \sum_\tau s_{\mu/\tau}(X+Y)\\
	= & \Phi(Y)  \prod_{k\geq 1} \frac{ \Phi(z^k(X+Y)) }{1-z^k} .\qedhere
\end{align*}
\end{proof}

\textbf{Remark.} Formula \eqref{eq:SchurSum:s2} is similar to the following formula
stated in Macdonald's book \cite{Macdonald1995} (see p. 94, ex. 28(a)), which has two free partitions $\la$ and $\gamma$:
\begin{equation}\label{eq:SchurSum:Macdonald:p94A}
	\sum_{\la, \gamma} z^{|\la|} s_{\la/\gamma}(X) s_{\la/\gamma}(Y)
=  \prod_{k\geq 1} \frac{\Psi(z^kX, Y)}{1-z^k}.
\end{equation}
\medskip
Now we are ready to give the proof of Theorem \ref{th:SchurSum:sn}.

\begin{proof}[Proof of the Theorem \ref{th:SchurSum:sn}]
Let $F(X^0, X^1, \ldots, X^{h-1},Y^0, Y^1, \ldots, Y^{h-1}) $ be the left-hand side of \eqref{eq:SchurSum:sn:open}.	By \eqref{eq:SchurSum:Macdonald:p93A}
	we have
\begin{align*}
	&F(X^0, X^1, \ldots, X^{h-1},Y^0, Y^1, \ldots, Y^{h-1}) 
\\
	&=\sum_{\la^1, \ldots, \la^{h-2}} 
	\sum_{\mu^0, \ldots, \mu^{h-1}} 
	\prod_{i=0}^{h-3}
s_{\la^i/\mu^i} (X^i)
	s_{\la^{i+1}/\mu^i}(Y^i) 
\\& 
	\quad \times
s_{\la^{h-2}/\mu^{h-2}}(X^{h-2}) 
s_{\la^{h}/\mu^{h-1}} (Y^{h-1}) 
\sum_{\la^{h-1}}
s_{\la^{h-1}/\mu^{h-2}} (Y^{h-2}) 
s_{\la^{h-1}/\mu^{h-1}}(X^{h-1}) 
\\
&
=\Psi(Y^{h-2},X^{h-1})
	\sum_{\la^1, \ldots, \la^{h-2}} 
	\sum_{\mu^0, \ldots, \mu^{h-1}} 
	\prod_{i=0}^{h-3}
s_{\la^i/\mu^i} (X^i)
	s_{\la^{i+1}/\mu^i}(Y^i) 
\\& \quad\times
s_{\la^{h-2}/\mu^{h-2}}(X^{h-2}) 
s_{\la^{h}/\mu^{h-1}} (Y^{h-1}) 
\sum_{\la^{h-1}}
s_{\mu^{h-2}/\la^{h-1}} (X^{h-1}) 
s_{\mu^{h-1}/\la^{h-1}} (Y^{h-2})
\\
&
=\Psi(Y^{h-2},X^{h-1})
	\sum_{\la^1, \ldots, \la^{h-1}} 
	\sum_{\mu^0, \ldots, \mu^{h-3}} 
	\prod_{i=0}^{h-3}
s_{\la^i/\mu^i} (X^i)
	s_{\la^{i+1}/\mu^i}(Y^i) 
\\& \ \times
\sum_{\mu^{h-2}}
s_{\la^{h-2}/\mu^{h-2}}(X^{h-2}) 
s_{\mu^{h-2}/\la^{h-1}} (X^{h-1}) 
\sum_{\mu^{h-1}}
s_{\la^{h}/\mu^{h-1}} (Y^{h-1}) 
s_{\mu^{h-1}/\la^{h-1}} (Y^{h-2})
\\
&
=\Psi(Y^{h-2},X^{h-1})
	\sum_{\la^1, \ldots, \la^{h-1}} 
	\sum_{\mu^0, \ldots, \mu^{h-3}} 
	\prod_{i=0}^{h-3}
s_{\la^i/\mu^i} (X^i)
	s_{\la^{i+1}/\mu^i}(Y^i) 
\\& \quad\times
s_{\la^{h-2}/\la^{h-1}}(X^{h-2}+X^{h-1}) 
s_{\la^{h}/\la^{h-1}} (Y^{h-2}+Y^{h-1})
\\
&
=\Psi(Y^{h-2},X^{h-1})
	\sum_{\la^1, \ldots, \la^{h-2}} 
	\sum_{\mu^0, \ldots, \mu^{h-2}} 
	\prod_{i=0}^{h-3}
s_{\la^i/\mu^i} (X^i)
	s_{\la^{i+1}/\mu^i}(Y^i) 
\\& \quad\times
s_{\la^{h-2}/\mu^{h-2}}(X^{h-2}+X^{h-1}) 
s_{\la^{h}/\mu^{h-2}} (Y^{h-2}+Y^{h-1})
\\
&=
\Psi(Y^{h-2},X^{h-1})
F(X^0, \ldots, X^{h-3},X^{h-2}+X^{h-1},Y^0, \ldots, Y^{h-3},Y^{h-2}+Y^{h-1})
\\
&=\cdots
\\
&=\Bigl(\prod_{0\leq i<j \leq {h-1}}\Psi(Y^i,X^j)\Bigr)
F(X^0+ \ldots+X^{h-1},Y^0+ \ldots+ Y^{h-1})
\\
&=\Bigl(\prod_{0\leq i<j \leq {h-1}}\Psi(Y^i,X^j)\Bigr)
\sum_{ \gamma}    s_{\la^0/\gamma} (X^0+ \ldots+X^{h-1}) s_{\la^{h}/\gamma}(Y^0+ \ldots+ Y^{h-1}).
\end{align*}

Therefore, identity \eqref{eq:SchurSum:sn:open} is true. Then
identities \eqref{eq:SchurSum:sn} and \eqref{eq:SchurSum:sn:cylinder} hold by \eqref{eq:SchurSum:sn:open}, \eqref{eq:SchurSum:Macdonald:p94A} and Lemma \ref{th:SchurSum:s2}.
\end{proof}


\section{Definitions for skew doubled shifted plane partitions}\label{sec:SkRPP} 
In this section we give the definition and the trace generating function of skew doubled shifted plane partitions.
Each profile $\delta$ is associated with a connected area  
$\Delta:=\Delta(\delta)$ 
of the quarter plane~$\Lambda$
in a unique manner.
For a given profile 
$$
\delta=1^{a_0}(-1)^{b_1}1^{a_1}(-1)^{b_2}\ldots 1^{a_{r-1}}(-1)^{b_r}
$$ 
with $a_0,b_r\geq 0$, $a_i,b_i\geq 1$ for $1\leq i \leq r-1$, let 
\begin{align*}
\Delta_1&=\bigcup_{i=1}^{r-1}\{ (c,d)\in \Lambda: \sum_{j=1}^{r-i-1}a_{r-j}  \leq c \leq \sum_{j=1}^{r-i}a_{r-j}, 1 \leq d\leq \sum_{j=1}^i b_i \},\\
\Delta_2&=\{(c,d)\in \Lambda: c-d > \sum_{i=0}^{r-1}a_i \}, \\
\Delta_3&=\{(c,d)\in \Lambda: d-c > \sum_{i=1}^{r}b_i \}.
\end{align*}
The connected area $\Delta$ is defined to be
$\Delta :=\Lambda \setminus (\Delta_1\cup \Delta_2\cup\Delta_3)$.
For example, with the profile $\delta=(1,-1,-1,1,-1,1,-1,1)$, the four
areas $\Delta_1, \Delta_2, \Delta_3, \Delta$ are illustrated in Fig. 3.

\medskip

Let $\la$ and $\mu$ be two integer partitions.
	We write $\la \succ \mu$ or $\mu \prec \la$ if $\la / \mu$ is a horizontal strip (see \cite{Langer2013A,Langer2013B,Macdonald1995, Okounkov2003, Okoun2006,  StanleyEC2}).

\begin{defi} \label{skew-double-shifted PP}
Let $\delta=(\delta_i)_{1\leq i\leq h}$ be a profile.
A \emph{skew doubled shifted plane partition $(\PP{DSPP})$} 
	with profile $\delta$ is a filling $\omega=(\omega_{i,j})$ of $\Delta(\delta)$ with nonnegative integers such that 
the size $|\omega|=\sum_{(i,j)} \omega_{i,j}$ is finite, and
the rows and columns are weakly decreasing, i.e.,
$$
\omega_{i,j}\geq \omega_{i,j+1},\quad \omega_{i,j}\geq \omega_{i+1,j}
$$
	whenever these numbers are well-defined. 
\end{defi}

\goodbreak

The set of all \PP{DSPP} with profile $\delta$ is denoted by 
$\PP{DSPP}_\delta$. Recall that the Schur process for plane partitions was first introduced by Okounkov and Reshetikhin \cite{Okounkov2003}  (see also \cite{Langer2013A,Langer2013B, Okoun2006}); the main idea was to read the plane partitions along the diagonals. When reading the \PP{DSPP} $\omega$ with profile $\delta$ along the diagonals from left to right, we obtain a sequence of integer partitions
$(\la^0, \la^1, \ldots, \la^h)$
such that $\la^{i-1}\prec \la^i $ (resp. $\la^{i-1} \succ \la^i$)
if $\delta_i = 1$  (resp.  $\delta_i = -1$),  
and $|\omega|=\sum_{i=0}^h |\la^i|$. For simplicity, we identify the skew doubled shifted plane partition $\omega$ and the sequence of integer partitions by writing
$$
\omega=
(\la^0, \la^1, \ldots, \la^h).$$
The {\it (diagonal) width} of the skew doubled shifted plane partition $\omega$ is defined to be $h+1$.

For example, with the \PP{DSPP} $\omega$ given in Fig. 3,
we obtain a sequence of partitions: $(4,1)\prec (5,4) \succ (5,2) \succ (3) \prec (4,1)\succ (2) \prec  (2,2)\succ(2,1)\prec(5,2,1)$. Hence, $\omega=\bigl((4,1), (5,4), (5,2), (3), (4,1), (2),  (2,2),(2,1),(5,2,1)\bigr)$ is an 
	\PP{DSPP} of width 9 with profile $\delta=(1,-1,-1,1,-1,1,-1,1)$.

$$
\begin{tikzpicture}[scale=0.6]
\begin{scope}[xshift=0, yshift=0]
\fill [gray!30](4.0000,3.0000)--(3.0000,3.0000)--(3.0000,4.0000)--(2.0000,4.0000)--(2.0000,5.0000)--(1.0000,5.0000)--(1.0000,6.0000)--(0.0000,6.0000)--(0.0000,7.0000)--(-1.0000,7.0000)--(-1.0000,8.0000)--(-1.0000,9.0000)--(0.0000,9.0000)--(1.0000,9.0000)--(1.0000,10.0000)--(2.0000,10.0000)--(2.0000,11.0000)--(3.0000,11.0000)--(3.0000,12.0000)--(4.0000,12.0000)--(4.0000,11.0000)--(5.0000,11.0000)--(5.0000,10.0000)--(6.0000,10.0000)--(6.0000,9.0000)--(7.0000,9.0000)--(7.0000,8.0000)--(8.0000,8.0000)--(8.0000,7.0000)--(7.0000,7.0000)--(7.0000,6.0000)--(6.0000,6.0000)--(6.0000,5.0000)--(5.0000,5.0000)--(5.0000,4.0000)--(4.0000,4.0000)--(4.0000,3.0000);
\draw [gray!10](-1.0000,7.0000)--(-1.0000,9.0000);
\draw [gray!10](0.0000,6.0000)--(0.0000,9.0000);
\draw [gray!10](1.0000,5.0000)--(1.0000,10.0000);
\draw [gray!10](2.0000,4.0000)--(2.0000,11.0000);
\draw [gray!10](3.0000,3.0000)--(3.0000,12.0000);
\draw [gray!10](4.0000,3.0000)--(4.0000,12.0000);
\draw [gray!10](5.0000,4.0000)--(5.0000,11.0000);
\draw [gray!10](6.0000,5.0000)--(6.0000,10.0000);
\draw [gray!10](7.0000,6.0000)--(7.0000,9.0000);
\draw [gray!10](3.0000,3.0000)--(4.0000,3.0000);
\draw [gray!10](2.0000,4.0000)--(5.0000,4.0000);
\draw [gray!10](1.0000,5.0000)--(6.0000,5.0000);
\draw [gray!10](0.0000,6.0000)--(7.0000,6.0000);
\draw [gray!10](-1.0000,7.0000)--(8.0000,7.0000);
\draw [gray!10](-1.0000,8.0000)--(8.0000,8.0000);
\draw [gray!10](-1.0000,9.0000)--(7.0000,9.0000);
\draw [gray!10](1.0000,10.0000)--(6.0000,10.0000);
\draw [gray!10](2.0000,11.0000)--(5.0000,11.0000);
\draw [black](4.0000,3.0000)--(3.0000,3.0000)--(3.0000,4.0000)--(2.0000,4.0000)--(2.0000,5.0000)--(1.0000,5.0000)--(1.0000,6.0000)--(0.0000,6.0000)--(0.0000,7.0000)--(-1.0000,7.0000)--(-1.0000,8.0000)--(-1.0000,9.0000)--(0.0000,9.0000)--(1.0000,9.0000)--(1.0000,10.0000)--(2.0000,10.0000)--(2.0000,11.0000)--(3.0000,11.0000)--(3.0000,12.0000)--(4.0000,12.0000)--(4.0000,11.0000)--(5.0000,11.0000)--(5.0000,10.0000)--(6.0000,10.0000)--(6.0000,9.0000)--(7.0000,9.0000)--(7.0000,8.0000)--(8.0000,8.0000)--(8.0000,7.0000);
\draw (-1.0000,12.0000)--(-1.0000,2.0000);
\draw (-1.0000,12.0000)--(9.0000,12.0000);
\draw (-0.5000, 11.5000) node [] {}; 
\draw (0.5000, 11.5000) node [] {}; 
\draw (1.5000, 11.5000) node [] {}; 
\draw (2.5000, 11.5000) node [] {}; 
\draw (3.5000, 11.5000) node [] {5}; 
\draw (-0.5000, 10.5000) node [] {}; 
\draw (0.5000, 10.5000) node [] {}; 
\draw (1.5000, 10.5000) node [] {}; 
\draw (2.5000, 10.5000) node [] {2}; 
\draw (3.5000, 10.5000) node [] {2}; 
\draw (4.5000, 10.5000) node [] {2}; 
\draw (-0.5000, 9.5000) node [] {}; 
\draw (0.5000, 9.5000) node [] {}; 
\draw (1.5000, 9.5000) node [] {4}; 
\draw (2.5000, 9.5000) node [] {2}; 
\draw (3.5000, 9.5000) node [] {2}; 
\draw (4.5000, 9.5000) node [] {1}; 
\draw (5.5000, 9.5000) node [] {1}; 
\draw (-0.5000, 8.5000) node [] {5}; 
\draw (0.5000, 8.5000) node [] {5}; 
\draw (1.5000, 8.5000) node [] {3}; 
\draw (2.5000, 8.5000) node [] {1}; 
\draw (-0.5000, 7.5000) node [] {4}; 
\draw (0.5000, 7.5000) node [] {4}; 
\draw (1.5000, 7.5000) node [] {2}; 
\draw (-0.5000, 6.5000) node [] {}; 
\draw (0.5000, 6.5000) node [] {1}; 
\draw (-0.6000, 11.2000) node [] {}; 
\draw (0.4000, 11.2000) node [] {}; 
\draw (1.4000, 11.2000) node [] {}; 
\draw (2.4000, 11.2000) node [] {{\tiny -1}}; 
\draw (-0.6000, 10.2000) node [] {}; 
\draw (0.4000, 10.2000) node [] {}; 
\draw (1.4000, 10.2000) node [] {{\tiny -1}}; 
\draw (-0.6000, 9.2000) node [] {{\tiny -1}}; 
\draw (0.4000, 9.2000) node [] {{\tiny -1}}; 
\draw (-1.2000, 11.5000) node [] {}; 
\draw (-0.2000, 11.5000) node [] {}; 
\draw (0.8000, 11.5000) node [] {}; 
\draw (1.8000, 11.5000) node [] {}; 
\draw (2.8000, 11.5000) node [] {{\tiny 1}}; 
\draw (-1.2000, 10.5000) node [] {}; 
\draw (-0.2000, 10.5000) node [] {}; 
\draw (0.8000, 10.5000) node [] {}; 
\draw (1.8000, 10.5000) node [] {{\tiny 1}}; 
\draw (-1.2000, 9.5000) node [] {}; 
\draw (-0.2000, 9.5000) node [] {}; 
\draw (0.8000, 9.5000) node [] {{\tiny 1}}; 
\draw (-1.2000, 8.5000) node [] {{\tiny 1}}; 
\draw [very thick](-1.0000,8.0000)--(-1.0000,9.0000)--(0.0000,9.0000)--(1.0000,9.0000)--(1.0000,10.0000)--(2.0000,10.0000)--(2.0000,11.0000)--(3.0000,11.0000)--(3.0000,12.0000);
\draw (0+0.3,11-0.3) node [] {$\Delta_1$};
\draw (0+0.3,4) node [] {$\Delta_2$};
\draw (7,11-0.3) node [] {$\Delta_3$};
\draw (4.5, 6.5) node [] {$\Delta$};
\end{scope}
\end{tikzpicture}
$$
\centerline{Fig. 3. A skew doubled shifted plane partition.}
\medskip

For a sequence of parameters $u_i\ (i\geq 0),$ write 
	$U_j=u_0u_1\cdots u_{j-1}$ ($j\geq 0$). 
Let $\PP{DSPP}_\delta(\la^0, \la^h)$ denote the set of 
	the skew doubled shifted plane partitions 
	$\omega=(\la^0,\la^1, \ldots, \la^h)$
	starting from $\la^0$ and ending at $\la^h$ with profile $\delta$. 
Let $Z^i=\{ U_i^{-\delta_i} \}$ in 
Theorem \ref{th:MainZ}, 
we obtain the following trace generating functions for skew doubled shifted plane partitions.
\begin{thm}\label{th:MainU}
Let $\delta=(\delta_i)_{1\leq i\leq h}$ be a profile.  We have
\begin{align}
	\sum_{\omega \in \PP{DSPP}_\delta(\la^0, \la^h)} \prod_{i=0}^h u_i^{|\la^i|}
&=U_{h+1}^{|\la^h|}
\prod_{\substack{1\leq i<j \leq h  \\ \delta_i > \delta_j   } } 
\frac{1}{1-U_i^{-1}U_j}  
\label{eq:MainUOpen}
\\
& \kern -15mm \times
\sum_{ \gamma}   s_{\la^0/\gamma} (\{U_i:\delta_i=-1    \})  s_{\la^{h}/\gamma}(\{U_i^{-1}:\delta_i=1\}); \nonumber \\
\sum_{\omega \in \PP{DSPP}_\delta} \prod_{i=0}^h u_i^{|\la^i|} 
&= \prod_{\substack{1\leq i<j \leq h  \\ \delta_i > \delta_j   } } 
\frac{1}{1-U_i^{-1}U_j}  \label{eq:MainUComplete}
\\
& \kern -15mm \times \nonumber
\Phi(\{U_i:\delta_i=-1    \})  \prod_{k\geq 1} 
\frac{\Phi(\{U_i^{-\delta_i}U_{h+1}^k: 1\leq i \leq h  \} )}{1-U_{h+1}^k}; \\
	\sum_{\omega \in \PP{DSPP}_\delta(\la^0= \la^h)} \prod_{i=0}^h u_i^{|\la^i|} 
&= \prod_{\substack{1\leq i<j \leq h  \\ \delta_i > \delta_j   } } 
\frac{1}{1-U_i^{-1}U_j}  \label{eq:MainUCylinder} 
\\ 
& \kern -15mm\times \nonumber
\prod_{k\geq 1} \frac{\Psi(\{U_iU_{h+1}^k:\delta_i=-1    \}, \{U_j^{-1}:\delta_j=1    \})}{1-U_{h+1}^k}.
\end{align}
\end{thm}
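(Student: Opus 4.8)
The plan is to specialize Theorem \ref{th:MainZ} to the principal-type alphabets $Z^i=\{U_i^{-\delta_i}\}$, each a single variable, and then to recognize the left-hand side as a trace generating function for skew doubled shifted plane partitions. First I would recall the combinatorial dictionary established in Section \ref{sec:SkRPP}: reading a \PP{DSPP} $\omega$ with profile $\delta$ along diagonals produces a sequence $(\la^0,\la^1,\ldots,\la^h)$ with $\la^{i-1}\prec\la^i$ when $\delta_i=1$ and $\la^{i-1}\succ\la^i$ when $\delta_i=-1$, and $|\omega|=\sum_{i=0}^h|\la^i|$. Since a single-variable skew Schur function satisfies $s_{\la/\mu}(\{t\})=t^{|\la|-|\mu|}$ when $\la/\mu$ is a horizontal strip and $0$ otherwise, the specialization $s^\delta_i(Z^i)=s^\delta_i(\{U_i^{-\delta_i}\})$ equals $U_i^{-\delta_i(|\la^i|-|\la^{i-1}|)}$ exactly on the sequences that encode a \PP{DSPP}, and vanishes otherwise. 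Multiplying these over $i=1,\ldots,h$ telescopes: writing $\delta_i(|\la^i|-|\la^{i-1}|)=|\la^i|-|\la^{i-1}|$ or $-(|\la^i|-|\la^{i-1}|)$ according to the sign, a direct bookkeeping check (using $U_{j+1}=U_j u_j$) shows $\prod_{i=1}^h U_i^{-\delta_i(|\la^i|-|\la^{i-1}|)}$ collapses to $u_0^{-|\la^0|}\cdots$ up to the boundary factor $U_{h+1}^{|\la^h|}$; after clearing, $\prod_{i=1}^h s^\delta_i(Z^i)=\bigl(\prod_{i=0}^h u_i^{|\la^i|}\bigr)\big/\bigl(U_{h+1}^{|\la^h|}\bigr)$ on the encoding sequences. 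Hence the left-hand sides of \eqref{eq:MainOpen}, \eqref{eq:MainComplete}, \eqref{eq:MainCylinder}, with $z$ absorbed into $U_{h+1}$ appropriately, become precisely the trace generating functions appearing in \eqref{eq:MainUOpen}, \eqref{eq:MainUComplete}, \eqref{eq:MainUCylinder}.

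Next I would evaluate the right-hand side factors under the same specialization. The product $\prod_{\delta_i>\delta_j}\Psi(Z^i,Z^j)$ with $Z^i=\{U_i^{-\delta_i}\}$, $Z^j=\{U_j^{-\delta_j}\}$ and the constraint $\delta_i>\delta_j$ (so $\delta_i=1$, $\delta_j=-1$, giving $Z^i=\{U_i^{-1}\}$, $Z^j=\{U_j\}$) becomes $\prod (1-U_i^{-1}U_j)^{-1}$, matching the prefactor in all three formulas. For \eqref{eq:MainComplete}, one has $\mathbf{Z}^\delta_-=\{U_i:\delta_i=-1\}$ and $\mathbf{Z}^\delta_+=\{U_i^{-1}:\delta_i=1\}$, so $\mathbf{Z}=\{U_i^{-\delta_i}:1\le i\le h\}$; substituting $z\mapsto U_{h+1}$ gives $\Phi(z^k\mathbf{Z})=\Phi(\{U_i^{-\delta_i}U_{h+1}^k:1\le i\le h\})$ and $1-z^k=1-U_{h+1}^k$, exactly \eqref{eq:MainUComplete}. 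The open formula \eqref{eq:MainUOpen} follows from \eqref{eq:MainOpen} with the same substitutions in the two skew Schur functions over $\gamma$, retaining the boundary weight $U_{h+1}^{|\la^h|}$ that I isolated above; and \eqref{eq:MainUCylinder} follows from \eqref{eq:MainCylinder} since $\Psi(z^k\mathbf{Z}^\delta_-,\mathbf{Z}^\delta_+)=\Psi(\{U_iU_{h+1}^k:\delta_i=-1\},\{U_j^{-1}:\delta_j=1\})$.

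The main obstacle I anticipate is purely bookkeeping: tracking the exponent of $U_{h+1}=u_0u_1\cdots u_h$ correctly through the telescoping, and confirming that the role of the dummy variable $z$ in Theorem \ref{th:MainZ} is played exactly by $U_{h+1}$ after the specialization. Concretely, one must check that $\sum_{i=1}^h -\delta_i(|\la^i|-|\la^{i-1}|)$ combines with the Schur-process weight $z^{|\la^h|}$ to reproduce $\prod_{i=0}^h u_i^{|\la^i|}$ with no leftover powers; this is an Abel-summation-style identity in the $|\la^i|$ and the partial products $U_j$, and it is the one place where a sign error or an off-by-one in the index range would propagate. Once this identity is verified, everything else is a term-by-term transcription of Theorem \ref{th:MainZ}, so no further ideas are needed. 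I would also remark that the three identities are not independent: \eqref{eq:MainUComplete} and \eqref{eq:MainUCylinder} can be recovered from \eqref{eq:MainUOpen} by the same substitutions that derive \eqref{eq:MainComplete} and \eqref{eq:MainCylinder} from \eqref{eq:MainOpen}, via Lemma \ref{th:SchurSum:s2} and \eqref{eq:SchurSum:Macdonald:p94A}.
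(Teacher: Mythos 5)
Your proposal is correct and follows exactly the route the paper takes: the paper's entire proof of Theorem \ref{th:MainU} is the one-line instruction to set $Z^i=\{U_i^{-\delta_i}\}$ in Theorem \ref{th:MainZ}, and your telescoping computation $\prod_{i=1}^h s^\delta_i(\{U_i^{-\delta_i}\})=U_{h+1}^{-|\la^h|}\prod_{i=0}^h u_i^{|\la^i|}$ together with the substitution $z=U_{h+1}$ is precisely the bookkeeping that justifies it. (Only a cosmetic slip: in the intermediate phrase the exponent of $u_0$ should be $+|\la^0|$, not $-|\la^0|$; your final displayed identity is right.)
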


The above theorem implies many classical results on various defective plane partitions, including the trace generating function of  ordinary plane partitions $(\PP{PP})$ (Stanley \cite{Stanley1973}), and the generating functions of symmetric plane partitions $(\PP{SPP})$ (Andrews \cite{Andrews1978PP1}, Macdonald~\cite{Macdonald1995}), skew plane partitions (Sagan \cite{Sagan1993}) and skew shifted plane partitions (Sagan \cite{Sagan1993}).


\section{Useful theorems for asymptotic formulas}\label{sec:Asym} 
\medskip

Inspired by the works of Dewar, Murty and Kot\v{e}\v{s}ovec \cite{DewarMurty2013, Kotesovec2015}, we have established some useful theorems for
asymptotic formulas in \cite{HanXiong2017}. The purpose of this section is to restate these results. 
Define
\begin{equation}\label{def:psi}
\psi_{n}(v, r, b; p) := v \sqrt{\frac{p(1-p)}{2\pi}} \frac{r^{b+(1-p)/2}}{n^{b+1-p/2}} \exp({n^p}{r^{1-p}})
\end{equation}
for $n\in \mathbb{N},$ $v,b\in \mathbb{R},$ $r>0,$ $0<p<1$.

\begin{thm}[\cite{HanXiong2017}]\label{th:asy_main2}
Let $t_1$ and $t_2$ be given positive integers with $\gcd(t_1,t_2)=1$. 
	Suppose that
	\begin{equation*}
		F_1(q) = \sum_{n=0}^\infty a_{t_1n} q^{t_1n} \qquad \text{and}\qquad
		F_2(q) = \sum_{n=0}^\infty c_{t_2n} q^{t_2n}
	\end{equation*}
	are two power series such that their coefficients satisfy the 
	asymptotic formulas
	\begin{align*}
		a_{t_1n} &\ \sim\  t_1\psi_{t_1n}(v_1, r_1, b_1; p), \\
		c_{t_2n} &\ \sim\  t_2\psi_{t_2n}(v_2, r_2, b_2; p),
	\end{align*}
	with $0<p<1,$ $r_1>0,r_2>0,$ $v_1,b_1,v_2, b_2\in \mathbb{R}$. 
Then, the coefficients $d_n$ in the product
	$$F_1(q)F_2(q) = \sum_{n=0}^\infty d_n q^{n}$$ 
	satisfy the following asymptotic formula
	\begin{align}\label{eq:asy_d_n}
		d_n \sim 
		\psi_{n}(v_1v_2, r_1+r_2, b_1+b_2 ; p).
	\end{align}
\end{thm}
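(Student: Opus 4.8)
\textbf{Proof proposal for Theorem~\ref{th:asy_main2}.}
The plan is to show that the coefficients of the product $F_1(q)F_2(q)$ are governed by a Laplace-type convolution, and that the dominant contribution comes from a balanced split of $n$. First I would write explicitly
$$
d_n = \sum_{\substack{i+j=n\\ t_1\mid i,\ t_2\mid j}} a_i c_j.
$$
Since $\gcd(t_1,t_2)=1$, for each residue there is essentially a unique way (up to the arithmetic progression) to realize $i$ as a multiple of $t_1$ and $j=n-i$ as a multiple of $t_2$; more precisely, the indices $i$ with $t_1\mid i$ and $t_2\mid(n-i)$ form an arithmetic progression with common difference $t_1t_2$, provided $n$ lies in the correct residue class mod $\gcd$-related modulus, and otherwise the sum may need a minor adjustment. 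Substituting the hypothesized asymptotics $a_i\sim t_1\psi_i(v_1,r_1,b_1;p)$ and $c_j\sim t_2\psi_j(v_2,r_2,b_2;p)$ turns $d_n$ into a sum of products of the explicit functions $\psi$, with the factors $t_1$ and $t_2$ combining with the density $1/(t_1t_2)$ of the progression to give the clean constant $1$ in front.

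Next I would treat the resulting sum as a Riemann sum for an integral and apply the Laplace method. Writing $i=xn$ with $x\in(0,1)$, the exponential part of $a_ic_{n-i}$ behaves like
$$
\exp\bigl(n^p(r_1^{1-p}x^p + r_2^{1-p}(1-x)^p)\bigr),
$$
and one checks that $x\mapsto r_1^{1-p}x^p+r_2^{1-p}(1-x)^p$ is maximized at the point $x_0 = r_1/(r_1+r_2)$, where its value is exactly $(r_1+r_2)^{1-p}$. This already pins down the exponential order $\exp(n^p(r_1+r_2)^{1-p})$ in $\psi_n(v_1v_2,r_1+r_2,b_1+b_2;p)$. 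The second derivative of the exponent at $x_0$ supplies the Gaussian width of the peak, which scales like $n^{(2-p)/2}$ after accounting for the $n^p$ prefactor in the exponent; integrating the Gaussian produces a factor proportional to $n^{(2-p)/4}\cdot$(something), and this, together with the polynomial prefactors $i^{b_1+\cdots}/i^{\cdots}$ evaluated at $i\approx x_0 n$ and the constants $v_1\sqrt{p(1-p)/2\pi}$, $v_2\sqrt{p(1-p)/2\pi}$, must be reassembled into the single expression $\psi_n(v_1v_2,r_1+r_2,b_1+b_2;p)$. This bookkeeping is the routine part: one substitutes $r_1=x_0(r_1+r_2)$, $r_2=(1-x_0)(r_1+r_2)$, collects the powers of $r_1+r_2$ and of $n$, and verifies that the $\sqrt{p(1-p)/2\pi}$ factors and the Gaussian normalization $\sqrt{2\pi/(\text{curvature})}$ telescope correctly.

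The main obstacle is not the formal Laplace computation but making the asymptotic estimate uniform enough to pass from ``$\sim$'' on $a_i$ and $c_j$ to ``$\sim$'' on $d_n$. One must control three regimes: the central window $|i-x_0n|\le n^{1/2+\varepsilon}$ (for suitable $n^{(2-p)/2}$-scaled window, depending on $p$) where the replacement of $a_i,c_j$ by their asymptotic equivalents is legitimate and the Laplace approximation is valid; the tails where $i$ or $n-i$ is bounded or small, where the asymptotic formulas for $a_i$ or $c_j$ may fail but the terms are negligible because the other factor carries only $\exp(o(n^p))$; and the intermediate range, bounded by a crude global estimate $a_i=O(\psi_i(\cdots))$ uniformly (which follows from the hypothesis) so that the off-peak contribution is exponentially smaller than the main term. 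For the last point I would use that the exponent function $r_1^{1-p}x^p+r_2^{1-p}(1-x)^p$ stays strictly below its maximum by a quantity $\gg n^p\cdot(\text{gap})^2$ outside the central window, which dominates any polynomial loss. Assembling these three estimates with the Riemann-sum-to-integral comparison (valid since the summand, as a function of the continuous variable $i$, varies slowly on the scale of the step $t_1t_2$) completes the argument.
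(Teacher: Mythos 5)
This paper does not actually prove Theorem~\ref{th:asy_main2}: it is restated verbatim from the companion paper \cite{HanXiong2017}, so there is no in-paper proof to compare against. Your proposal follows the natural (and, as far as the cited source goes, the same) route --- write $d_n$ as a convolution over the arithmetic progression $i\equiv 0\ (\modsymb t_1)$, $n-i\equiv 0\ (\modsymb t_2)$ of step $t_1t_2$, and run the Laplace method --- and your key computations check out: the exponent $x^p r_1^{1-p}+(1-x)^p r_2^{1-p}$ is indeed maximized at $x_0=r_1/(r_1+r_2)$ with value $(r_1+r_2)^{1-p}$, the curvature there is $p(p-1)(r_1+r_2)^{3-p}/(r_1r_2)$, and carrying the Gaussian factor $\sqrt{2\pi/(n^p|g''(x_0)|)}$ and the prefactors through does reproduce $\psi_n(v_1v_2,r_1+r_2,b_1+b_2;p)$ exactly, with the density $1/(t_1t_2)$ of the progression cancelling the factors $t_1t_2$ from the hypotheses. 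The only reservations are presentational: the central window should be $|i-x_0n|\lesssim n^{1-p/2}$ (your parenthetical correction, not the $n^{1/2+\varepsilon}$ you first wrote), no residue-class adjustment is ever needed since $\gcd(t_1,t_2)=1$ makes the progression nonempty for every $n$, and the uniformity/tail estimates are described rather than executed --- but the plan for them is sound and completing it is routine.
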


\begin{thm}[\cite{HanXiong2017}]\label{th:Asy_Ribbon:multi}
Let $m$ be a positive integer. 
Suppose that $x_i$ and 
	$y_i$ $(1\leq i\leq m)$ are 
positive integers such that 
$\gcd(x_1, x_2, \ldots, x_m, y_1, y_2, \ldots, y_m)=1$.
Then, the coefficients $d_n$ in the following infinite product
$$
\prod_{i=1}^m \prod_{k\geq 0}\frac {1}{1-q^{x_ik+y_i}}=\sum_{n=0}^\infty d_{n} q^{n}
$$
have the following asymptotic formula
\begin{align}\label{eq:asy_product_2}
d_{n} \ \sim  \ 
	 v {\frac{1}{2\sqrt{2\pi}}} 
	\frac{r^{b+1/4}}{n^{b+3/4}} \exp(\sqrt{nr}),
\end{align}
where
$$
	v=\prod_{i=1}^m \frac{\Gamma(y_i/x_i)}{\sqrt{x_i\pi}} (\frac {x_i}{2})^{y_i/x_i}, \qquad
	r=\sum_{i=1}^m\frac {2\pi^2}{3x_i}, \qquad 
	b=\sum_{i=1}^m(\frac{y_i}{2x_i} - \frac 14).
$$   
\end{thm}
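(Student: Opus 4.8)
The plan is to treat $F(q)=\prod_{i=1}^m\prod_{k\geq0}(1-q^{x_ik+y_i})^{-1}$ as a Meinardus-type infinite product and to extract the coefficient asymptotics by the saddle-point method on the circle $|q|=e^{-s}$ as $s\to0^+$. A reduction to single factors via Theorem~\ref{th:asy_main2} is not available, because that theorem requires coprime step sizes while the $x_i$ here may share common factors; a direct Mellin/saddle-point analysis handles the general gcd hypothesis uniformly. First I would set $q=e^{-s}$ and expand $\log(1-u)^{-1}=\sum_{\ell\geq1}u^\ell/\ell$ to write
\[
\log F(e^{-s})=\sum_{i=1}^m\sum_{\ell\geq1}\frac1\ell\frac{e^{-s\ell y_i}}{1-e^{-s\ell x_i}}.
\]
Applying the Mellin transform $\int_0^\infty e^{-st}t^{w-1}\,dt=\Gamma(w)s^{-w}$ and interchanging summation with integration turns this into the contour integral
\[
\log F(e^{-s})=\frac1{2\pi i}\int_{c-i\infty}^{c+i\infty}\Gamma(w)\,\zeta(w+1)\,D(w)\,s^{-w}\,dw\qquad(c>1),
\]
where $D(w)=\sum_{i=1}^m x_i^{-w}\,\zeta(w,y_i/x_i)$ is the associated Dirichlet series and $\zeta(w,a)$ is the Hurwitz zeta function.

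Next I would read off the small-$s$ expansion of $\log F(e^{-s})$ by shifting the contour to the left past the poles of the integrand. The Hurwitz factors give $D$ a single simple pole at $w=1$ with residue $\sum_i1/x_i$, producing the term $\Gamma(1)\zeta(2)\bigl(\sum_i1/x_i\bigr)s^{-1}=\tfrac r4\,s^{-1}$, since $r=\tfrac{2\pi^2}{3}\sum_i1/x_i$. At $w=0$ the product $\Gamma(w)\zeta(w+1)$ has a double pole while $D(w)$ is regular, and the residue there contributes $-D(0)\log s+D'(0)$. Using $\zeta(0,a)=\tfrac12-a$ and the Lerch identity $\zeta'(0,a)=\log\bigl(\Gamma(a)/\sqrt{2\pi}\bigr)$, one gets $D(0)=\sum_i(\tfrac12-y_i/x_i)$ and $D'(0)=\sum_i\bigl[-(\tfrac12-y_i/x_i)\log x_i+\log(\Gamma(y_i/x_i)/\sqrt{2\pi})\bigr]$, so that altogether
\[
\log F(e^{-s})=\frac r4\,s^{-1}-D(0)\log s+D'(0)+o(1)\qquad(s\to0^+).
\]

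Finally I would feed this into Cauchy's coefficient formula $d_n=\frac1{2\pi i}\oint F(q)q^{-n-1}\,dq$ and run the saddle-point method along $q=e^{-s}$ with $g(s):=\log F(e^{-s})+ns$. The saddle $s^\ast$ solves $n=\tfrac r4(s^\ast)^{-2}+D(0)/s^\ast+\cdots$, whence $s^\ast\sim\tfrac12\sqrt{r/n}$ and $\tfrac r4(s^\ast)^{-1}+ns^\ast\sim\sqrt{rn}$; the factor $e^{D'(0)}$, the power $(s^\ast)^{-D(0)}\sim(n/r)^{D(0)/2}$, and the Gaussian normalization $\bigl(2\pi\,g''(s^\ast)\bigr)^{-1/2}$ with $g''(s^\ast)\sim\tfrac r2(s^\ast)^{-3}$ combine, after routine bookkeeping of powers of $n$ and $r$, into exactly $v\,\tfrac1{2\sqrt{2\pi}}\,r^{b+1/4}n^{-(b+3/4)}\exp(\sqrt{nr})$, where $b=-\tfrac12D(0)=\sum_i(y_i/(2x_i)-\tfrac14)$ and $v=\prod_i\tfrac{\Gamma(y_i/x_i)}{\sqrt{x_i\pi}}(x_i/2)^{y_i/x_i}$ collects $e^{D'(0)}$ together with the elementary constants from the saddle. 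I expect the main obstacle to be not the residue bookkeeping but the rigorous justification of the saddle-point estimate: I must show that $|F(e^{-s^\ast-it})|$ is exponentially smaller than $F(e^{-s^\ast})$ for $t$ bounded away from $0$, so that only a shrinking arc around the positive real axis contributes. This is precisely where the hypothesis $\gcd(x_1,\ldots,x_m,y_1,\ldots,y_m)=1$ enters: it guarantees that $q=1$ is the unique dominant singularity of $F$ on $|q|=1$ (Meinardus' aperiodicity condition), ruling out secondary saddles on a proper sublattice that would otherwise split the asymptotics across residue classes of $n$.
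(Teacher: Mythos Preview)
The paper does not actually contain a proof of this theorem: Section~\ref{sec:Asym} merely restates it (together with Theorem~\ref{th:asy_main2}) from the companion paper \cite{HanXiong2017}, so there is no in-paper argument to compare against. Your Mellin--Meinardus plus saddle-point route is the classical and correct approach to products of this shape; the residue bookkeeping you sketch does match the stated values of $r$, $b$, and $v$ (in particular $e^{D'(0)}\cdot 2^{-2b}=v$), and you are right that the gcd hypothesis is exactly what forces $q=1$ to be the unique dominant singularity, which is the genuinely delicate step in making the saddle-point rigorous.

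One remark on your opening sentence: you say a reduction via Theorem~\ref{th:asy_main2} ``is not available'' because the $x_i$ may share factors. That is a fair worry for a naive pairwise iteration, but a route through Theorem~\ref{th:asy_main2} is still possible once one knows the single-factor case $\prod_{k\geq0}(1-q^{xk+y})^{-1}$: each such factor is supported on the sublattice $\gcd(x,y)\mathbb{Z}$, and the global hypothesis $\gcd(x_1,\ldots,y_m)=1$ lets one regroup the factors so that at each multiplication step the two partial products live on coprime sublattices. That is presumably how \cite{HanXiong2017} proceeds, given that the two theorems are packaged together there. Your direct Meinardus argument is more self-contained and avoids this regrouping entirely, at the cost of having to verify the minor-arc estimate by hand.
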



\section{Formulas for skew doubled shifted plane partitions} \label{sec:gfRPP}

Let $u_i=z$ for $i\geq 0$ in  \eqref{eq:MainUComplete}. We then derive
the generating function for $\PP{DSPP}_\delta$:
\begin{align*}
& \sum_{\omega \in \PP{DSPP}_\delta}  z^{|\omega|}=
\prod_{\substack{1\leq j<i \leq h  \\ \delta_i < \delta_j   } } \frac{1}{1-z^{i-j}}
\times \Phi(\{z^i:\delta_i=-1    \}) 
\\
&\kern 28mm\times
\prod_{k\geq 1}    \frac{\Phi(\{z^{(h+1)k+i}: \delta_i=-1  \}  + \{z^{(h+1)k-j}:\delta_j=1    \} )}{1-z^{(h+1)k}}.
\end{align*}
The right-hand side of the above identity can be further simplified. 
For each profile $\delta=(\delta_i)_{1\leq i\leq m-1}$,
we define the following {multisets}  as 
\begin{align*}
	W_1(\delta)&:=\{m\} \cup \{ i \mid  \delta_i=-1\} \cup \{ m-i\mid  \delta_{i}=1 \}; \\
	W_2(\delta) 
	&:=\{\quad i+j\quad \mid  1\leq i<j \leq m-1,\ \delta_i=\delta_{j}=-1 \}   \\
	&\cup \{ 2m-i-j\mid 1\leq i<j \leq m-1,\ \delta_i=\delta_{j}=1 \}  \\
	&\cup \{ 2m+i-j\mid 1\leq i<j \leq m-1,\ \delta_i<\delta_{j} \}  \\
  &\cup \{\quad j-i\quad \mid    1\leq i<j \leq m-1,\ \delta_i>\delta_{j} \}. 
\end{align*}

\begin{thm}\label{th:Skew_Double_Shifted_PP}
The generating function for the
skew doubled shifted plane partitions  with profile $\delta=(\delta_i)_{1\leq i\leq m-1}$
is
\begin{equation*}
\sum_{\omega \in \PP{DSPP}_\delta}  z^{|\omega|}
=  
	\prod_{k\geq 0}  
	\left(\prod_{t\in W_1(\delta)} \frac{1}{1-z^{mk+t}}\right) 
	\left(\prod_{t\in W_2(\delta)} \frac{1}{1-z^{2mk+t}}\right).
\end{equation*}
\end{thm}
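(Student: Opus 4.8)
The plan is to start from the intermediate generating function formula for $\PP{DSPP}_\delta$ already displayed at the beginning of Section~\ref{sec:gfRPP}, namely
\[
\sum_{\omega \in \PP{DSPP}_\delta}  z^{|\omega|}=
\prod_{\substack{1\leq j<i \leq h  \\ \delta_i < \delta_j   } } \frac{1}{1-z^{i-j}}
\times \Phi(\{z^i:\delta_i=-1\})
\times
\prod_{k\geq 1}    \frac{\Phi(\{z^{(h+1)k+i}: \delta_i=-1  \}  + \{z^{(h+1)k-j}:\delta_j=1    \} )}{1-z^{(h+1)k}},
\]
with $h=m-1$ (so $h+1=m$), and simply expand every factor into an explicit product over $k\geq 0$ using the definition $\Phi(X)=\prod_i(1-x_i)^{-1}\prod_{i<j}(1-x_ix_j)^{-1}$. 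The target is to show the right-hand side equals $\prod_{k\geq 0}\bigl(\prod_{t\in W_1(\delta)}(1-z^{mk+t})^{-1}\bigr)\bigl(\prod_{t\in W_2(\delta)}(1-z^{2mk+t})^{-1}\bigr)$, so the whole proof is a bookkeeping matching of exponents.

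First I would handle the $W_1$ (linear) part. The factor $\prod_{k\geq 1}(1-z^{mk})^{-1}$ contributes the exponents $mk$ for $k\geq 1$, i.e.\ the element $t=m$ at level $k\geq 0$ shifted by one — this accounts for $m\in W_1(\delta)$. The ``first-power'' part of $\Phi(\{z^i:\delta_i=-1\})$, namely $\prod_{i:\delta_i=-1}(1-z^i)^{-1}$, together with the first-power part of $\Phi(\{z^{mk+i}:\delta_i=-1\}+\{z^{mk-j}:\delta_j=1\})$ for $k\geq 1$, combines to give $\prod_{k\geq 0}\prod_{i:\delta_i=-1}(1-z^{mk+i})^{-1}$ and $\prod_{k\geq 1}\prod_{j:\delta_j=1}(1-z^{mk-j})^{-1}$. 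The second of these is $\prod_{k\geq 0}\prod_{j:\delta_j=1}(1-z^{m(k+1)-j})^{-1}=\prod_{k\geq 0}\prod_{j:\delta_j=1}(1-z^{mk+(m-j)})^{-1}$, which is exactly the $\{m-j:\delta_j=1\}$ part of $W_1(\delta)$, and one must check here that $1\leq m-j\leq m-1$ so no overlap with the $t=m$ term occurs. Collecting, the linear part is precisely $\prod_{k\geq 0}\prod_{t\in W_1(\delta)}(1-z^{mk+t})^{-1}$.

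Next I would handle the $W_2$ (quadratic) part, which comes from three sources: the $\prod_{i<j}(1-x_ix_j)^{-1}$ piece of $\Phi(\{z^i:\delta_i=-1\})$; the analogous piece of $\Phi(\{z^{mk+i}:\delta_i=-1\}+\{z^{mk-j}:\delta_j=1\})$ for $k\geq 1$; and the product $\prod_{1\leq j<i\leq h,\ \delta_i<\delta_j}(1-z^{i-j})^{-1}$. Within the big $\Phi$ at level $k$ the pairwise products split by type: $(-1,-1)$ pairs give $z^{2mk+i+j}$ with $\delta_i=\delta_j=-1$; $(1,1)$ pairs give $z^{2mk-i-j}=z^{2m(k-1)+(2m-i-j)}$ with $\delta_i=\delta_j=1$; and mixed pairs $\delta_i=-1,\delta_j=1$ give $z^{2mk+i-j}$ — and here one must be careful, because a mixed pair with index $i$ (for the $-1$ letter) and $j$ (for the $1$ letter) can have $i<j$ or $i>j$, so I would split into the case $\delta$-index ordering matching $W_2$'s third bullet (giving $2m+i-j$, i.e.\ $2m(k-1)+(2m+i-j)$) versus the reversed ordering. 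The $k=0$ contributions from $\Phi(\{z^i:\delta_i=-1\})$ supply the $\{i+j:\delta_i=\delta_j=-1\}$ elements at level $k=0$, completing the $(-1,-1)$ family. The remaining mixed-pair exponents of the form $z^{j-i}$ with $i<j$ and $\delta_i<\delta_j$ that appear once inside each $\Phi$ level but shifted wrongly must be absorbed by the factor $\prod_{\delta_i<\delta_j,\ j<i}(1-z^{i-j})^{-1}$ (after relabeling), which furnishes exactly the $\{j-i:\delta_i>\delta_j\}$ part of $W_2(\delta)$ at level $k=0$.

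The main obstacle, and the step requiring genuine care, is the index bookkeeping for the mixed $(-1,1)$ pairs: deciding for which $(i,j)$ the pairwise exponent inside $\Phi$ at level $k$ is $2mk+i-j$ with $i-j>0$ versus $i-j<0$, tracking how the $k\mapsto k-1$ reindexing turns $2m-$shifts into the $2m+i-j$ and $2m-i-j$ forms, and verifying that the ``leftover'' small exponents $j-i$ coming from the prefactor $\prod(1-z^{i-j})^{-1}$ exactly fill the gap so that every element of $W_2(\delta)$ is hit once and only once with the correct multiplicity. I would organize this by fixing a level $k$ and a residue class mod $m$ (or mod $2m$) and counting exponents on both sides, using that all the shifts $t\in W_1(\delta)$ lie in $[1,m]$ and all $t\in W_2(\delta)$ lie in a range forcing $2mk+t$ to determine $k$ and $t$ uniquely, so the term-by-term identification is unambiguous. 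Once the exponent sets match with multiplicity, the identity of the two infinite products is immediate.
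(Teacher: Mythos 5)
Your proposal is correct and follows exactly the route the paper takes (and leaves implicit): specialize $u_i=z$ in \eqref{eq:MainUComplete} to get the displayed intermediate formula at the start of Section~\ref{sec:gfRPP}, then expand the $\Phi$'s and the prefactor and match exponents against $W_1(\delta)$ and $W_2(\delta)$. Your bookkeeping of the four families of $W_2(\delta)$ — in particular that the mixed pairs inside the level-$k$ $\Phi$ supply the $\{j-i:\delta_i>\delta_j\}$ exponents only for $k\geq 1$ while the prefactor $\prod_{\delta_i>\delta_j}(1-z^{j-i})^{-1}$ furnishes the missing $k=0$ level — is the right verification.
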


By using the above generating function,
our next theorem gives asymptotic formulas for the number of skew doubled shifted plane 
partitions. For simplicity, write
$$
\epsilon(\delta):=\, -\sum_{i} \delta_i\, i \,=\sum_{\delta_i=-1} i - \sum_{\delta_j=1} j.
$$

\begin{thm}\label{th:Skew_Double_Shifted_PP_Asy}
Let $m\geq 2$ be a positive integer and
$\delta=(\delta_j)_{1\leq j\leq m-1}$ be a profile of length $m-1$.
	We denote by $\PP{DSPP}_\delta(n)$ the number of 
	skew doubled shifted plane partitions $\omega$ with profile $\delta$ and size $n$. Then,
$$
	\PP{DSPP}_\delta(n) \ \sim \  
	C_1(\delta)C_2(m) \times  \frac{1}{n} 
	\exp\Bigl(\pi\sqrt{\frac{(m^2+m+2)n}{6m}} \, \Bigr),
$$
where $C_1(\delta)$ and $C_2(m)$ are two constants with respect to $n$:
\begin{align*}
C_1(\delta)&=
	2^{-\frac{\epsilon(\delta)}{m} 
  -|\delta|_1}\times
	\prod_{t\in W_1(\delta)} \Gamma(\frac{t}{m})\prod_{t\in W_2(\delta)} \Gamma(\frac{t}{2m}),
\\
C_2(m)
	&=    \Bigl( 2^{ m^2-3m+14 } {{\pi}^{{m^2-m}}} \Bigr)^{-\frac 14}
	\times \sqrt{\frac{m^2+m+2}{3}}.
\end{align*}
\end{thm}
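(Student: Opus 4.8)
The plan is to derive Theorem~\ref{th:Skew_Double_Shifted_PP_Asy} from the product formula in Theorem~\ref{th:Skew_Double_Shifted_PP} by matching that product against the general infinite-product asymptotic stated in Theorem~\ref{th:Asy_Ribbon:multi}. First I would rewrite
$$
\sum_{\omega\in\PP{DSPP}_\delta}z^{|\omega|}
=\prod_{t\in W_1(\delta)}\prod_{k\geq 0}\frac{1}{1-z^{mk+t}}\times\prod_{t\in W_2(\delta)}\prod_{k\geq 0}\frac{1}{1-z^{2mk+t}}
$$
so that it is exactly of the shape $\prod_i\prod_{k\geq 0}(1-q^{x_ik+y_i})^{-1}$ appearing in Theorem~\ref{th:Asy_Ribbon:multi}: the factors indexed by $W_1(\delta)$ have $x_i=m$, $y_i=t$ ($t\in W_1(\delta)$), and those indexed by $W_2(\delta)$ have $x_i=2m$, $y_i=t$ ($t\in W_2(\delta)$). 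I should first check the gcd hypothesis of Theorem~\ref{th:Asy_Ribbon:multi}: since $m\in W_1(\delta)$ and (from the definition of $W_1$, when $\delta_i=1$ or $\delta_i=-1$ we pick up small values, and in any case $1$ or a number coprime to $m$ will occur, e.g. the values $i$ and $m-i$) the overall gcd of all the exponents and moduli is $1$ — this needs a short verification but is essentially immediate because $m$ and either some $t$ or the structure of $W_1$ forces it.

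Next I would substitute the parameters into the three constants of Theorem~\ref{th:Asy_Ribbon:multi}. The exponent constant is
$$
r=\sum_{t\in W_1(\delta)}\frac{2\pi^2}{3m}+\sum_{t\in W_2(\delta)}\frac{2\pi^2}{3\cdot 2m}
=\frac{2\pi^2}{3m}\Bigl(|W_1(\delta)|+\tfrac12|W_2(\delta)|\Bigr).
$$
Here the key combinatorial input is the cardinality count: $|W_1(\delta)|=m$ (one element $m$, plus $|\delta|_{-1}$ elements from $\{i:\delta_i=-1\}$, plus $|\delta|_1$ elements from $\{m-i:\delta_i=1\}$, and $|\delta|_1+|\delta|_{-1}=m-1$), while $|W_2(\delta)|=\binom{m-1}{2}$ since $W_2(\delta)$ is indexed exactly by the pairs $1\leq i<j\leq m-1$ (the four sub-cases $\delta_i=\delta_j=-1$, $\delta_i=\delta_j=1$, $\delta_i<\delta_j$, $\delta_i>\delta_j$ partition all such pairs). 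Thus
$$
|W_1(\delta)|+\tfrac12|W_2(\delta)|=m+\frac{(m-1)(m-2)}{4}=\frac{m^2+m+2}{4},
$$
giving $r=\dfrac{\pi^2(m^2+m+2)}{6m}$, so $\exp(\sqrt{nr})=\exp\bigl(\pi\sqrt{(m^2+m+2)n/(6m)}\bigr)$, matching the claimed exponential. For the power of $n$: $b=\sum_{t\in W_1}(\tfrac{t}{2m}-\tfrac14)+\sum_{t\in W_2}(\tfrac{t}{4m}-\tfrac14)$, and then $b+\tfrac34$ must come out to $1$, i.e.\ $b=\tfrac14$; the $-\tfrac14$ terms contribute $-\tfrac14(|W_1|+|W_2|)=-\tfrac14\bigl(m+\binom{m-1}{2}\bigr)$, so I need the ``weight'' identity $\sum_{t\in W_1}\tfrac{t}{2m}+\sum_{t\in W_2}\tfrac{t}{4m}=\tfrac14+\tfrac14\bigl(m+\binom{m-1}{2}\bigr)$, equivalently $\tfrac1m\sum_{W_1}t+\tfrac{1}{2m}\sum_{W_2}t=\tfrac12+\tfrac12\bigl(m+\binom{m-1}{2}\bigr)$. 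I would verify this by summing: $\sum_{W_1}t=m+\sum_{\delta_i=-1}i+\sum_{\delta_i=1}(m-i)$ and expanding $\sum_{W_2}t$ case by case in terms of $\sum i$, $\sum j$, $\sum(i+j)$ over the relevant pair-sets; the $\delta$-dependence should cancel after division, leaving a value depending only on $m$. This is the computational heart and the place where a sign or indexing slip is easiest, so I would do it carefully, tracking $\epsilon(\delta)=\sum_{\delta_i=-1}i-\sum_{\delta_j=1}j$ explicitly since it is the natural bookkeeping variable.

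Finally, the constant $v$ from Theorem~\ref{th:Asy_Ribbon:multi} is
$$
v=\prod_{t\in W_1(\delta)}\frac{\Gamma(t/m)}{\sqrt{m\pi}}\Bigl(\tfrac{m}{2}\Bigr)^{t/m}\times\prod_{t\in W_2(\delta)}\frac{\Gamma(t/(2m))}{\sqrt{2m\pi}}\Bigl(\tfrac{2m}{2}\Bigr)^{t/(2m)},
$$
and the asymptotic of Theorem~\ref{th:Asy_Ribbon:multi} reads $d_n\sim v\,\tfrac{1}{2\sqrt{2\pi}}\,\tfrac{r^{b+1/4}}{n^{b+3/4}}e^{\sqrt{nr}}$. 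Plugging in $b=\tfrac14$, $b+\tfrac34=1$, $b+\tfrac14=\tfrac12$, and $r=\pi^2(m^2+m+2)/(6m)$, I get a $\tfrac1n$ prefactor times $v\,\tfrac{1}{2\sqrt{2\pi}}\sqrt{r}$. It then remains to collect the explicit algebraic pieces: the $\sqrt{m\pi}$ and $\sqrt{2m\pi}$ denominators combine with $|W_1|=m$, $|W_2|=\binom{m-1}{2}$ into a power of $2$ and a power of $\pi$ and of $m$, the $(m/2)^{t/m}$ and $m^{t/(2m)}$ powers combine with the weight sum computed above into the $2^{-\epsilon(\delta)/m-|\delta|_1}$ factor of $C_1(\delta)$ (again $\epsilon(\delta)$ is exactly what survives), and the $\Gamma$-factors give precisely $\prod_{W_1}\Gamma(t/m)\prod_{W_2}\Gamma(t/(2m))$. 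Matching the remaining purely $m$-dependent constants against $C_2(m)=\bigl(2^{m^2-3m+14}\pi^{m^2-m}\bigr)^{-1/4}\sqrt{(m^2+m+2)/3}$ is then a bookkeeping check: the $\pi$-power is $-\tfrac14(|W_1|+|W_2|)$ from the denominators plus $+\tfrac12$ from $\sqrt r$ times... — I would just collect exponents of $2$, $\pi$, $m$ one at a time. The main obstacle is entirely in this last matching plus the weight identity of the previous paragraph: there is no conceptual difficulty, but the exponent arithmetic (especially isolating the clean $2^{-\epsilon(\delta)/m-|\delta|_1}$ and confirming the $2^{m^2-3m+14}$, $\pi^{m^2-m}$ powers) is delicate and must be reconciled with the $m=3$ and doubled-shifted ($\delta=(-1)^{m-1}$) special cases already recorded in Theorems~\ref{th:gfRPP} and~\ref{th:AsymRPP} as a consistency test.
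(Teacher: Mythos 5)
Your proposal is correct and follows essentially the same route as the paper: apply Theorem~\ref{th:Asy_Ribbon:multi} to the product of Theorem~\ref{th:Skew_Double_Shifted_PP} with moduli $m$ for $W_1(\delta)$ and $2m$ for $W_2(\delta)$, use $\#W_1(\delta)=m$ and $\#W_2(\delta)=\binom{m-1}{2}$ to get $r=\frac{(m^2+m+2)\pi^2}{6m}$, and verify the weight identity forcing $b=\frac14$ before collecting the powers of $2$, $\pi$ and $m$ into $C_1(\delta)C_2(m)$. The only cosmetic difference is that the paper disposes of the $\delta$-dependence in the weight sum via an adjacent-transposition invariance argument rather than your proposed direct expansion, but both yield the same identity.
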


\begin{proof}
By the definitions of $W_1(\delta)$ and $W_2(\delta)$
we have
	\begin{align*}
		\#W_1(\delta) &= m;\\
		\#W_2(\delta)&= \binom{m-1}{2};\\
		\sum_{t\in W_1(\delta)} t &= m(|\delta|_1+1)+ \epsilon(\delta); \\
		\sum_{t\in W_2(\delta)} t &= (m-2) \epsilon(\delta)
+ 2m\binom{|\delta|_1}{2}
+2m \sum_{\substack{1\leq i<j \leq m-1  \\ \delta_i < \delta_j   } } 1.\\
\end{align*}
Hence,
	\begin{equation}\label{eq:W1W2sum1}
\sum_{t\in W_1(\delta)} \frac{1}{m} + \sum_{t\in W_2(\delta)} \frac{1}{2m} 
		= \frac{m}{m} + \frac 12 \binom{m-1}{2} = {\frac{m^2+m+2}{4m}}.
	\end{equation}
Furthermore,
\begin{equation*}
\sum_{t\in W_1(\delta)} \frac{t}{m}  + \sum_{t\in W_2(\delta)} \frac{t}{2m} 
=
	1+|\delta|_1+\binom{|\delta|_1}{2}+ \sum_{\substack{1\leq i<j \leq m-1  \\ \delta_i < \delta_j   } } 1 + \frac{\epsilon(\delta)}{2} .
\end{equation*}
If we exchange any two adjacent letters in $\delta$, 
the sum of the last two terms doesn't change, therefore it
	is equal to $\frac{1}{2} \binom{m}{2} - \binom{|\delta|_1+1}{2}$.
Then we obtain
\begin{equation*}
\sum_{t\in W_1(\delta)} \frac{t}{m}  + \sum_{t\in W_2(\delta)} \frac{t}{2m} 
=\frac{m^2-m+4}{4}
\end{equation*}
and
\begin{equation}\label{eq:W1W2sumt}
\sum_{t\in W_1(\delta)} (\frac{t}{2m} - \frac 14) + \sum_{t\in W_2(\delta)} (\frac{t}{4m} - \frac 14)   = \frac 14.
\end{equation}

By Theorems   \ref{th:Asy_Ribbon:multi} and \ref{th:Skew_Double_Shifted_PP}  the number of \PP{DSPP} with profile $\delta$ and size $n$ is asymptotic to
\begin{align*}
	 v {\frac{1}{2\sqrt{2\pi}}} 
	\frac{r^{b+1/4}}{n^{b+3/4}} \exp(\sqrt{nr}),
\end{align*}
where  
\begin{align*}
v&= \prod_{t\in W_1(\delta)} \left(\frac{\Gamma(t/m)}{\sqrt{m\pi}} (\frac {m}{2})^{t/m} \right)  
\prod_{t\in W_2(\delta)} \left(\frac{\Gamma(t/(2m))}{\sqrt{2m\pi}} ({m})^{t/(2m)} \right)
\\&=
		2^{-\frac{\epsilon(\delta)}{m}
  -|\delta|_1-1-\frac12 \binom{m-1}{2}} 
  { m^{1/{2} }} {{\pi}^{({-m^2+m-2})/{4}}}
	\\&\qquad\qquad	\times
	\prod_{t\in W_1(\delta)} \Gamma(\frac{t}{m})\prod_{t\in W_2(\delta)} \Gamma(\frac{t}{2m}),\\
r&= \sum_{t\in W_1(\delta) } \frac {2\pi^2}{3m} + \sum_{t\in W_2(\delta) } \frac {2\pi^2}{6m}
= \frac{(m^2+m+2)\pi^2}{6m}, \\
b&= \sum_{t\in W_1(\delta) }\left(\frac{t}{2m} - \frac 14\right)
+ \sum_{t\in W_2(\delta) } \left(\frac{t}{4m} - \frac 14\right)
=\frac{1}{4}.
\end{align*}
This achieves the proof.
\end{proof}

Theorems \ref{th:gfRPP} and \ref{th:AsymRPP} for doubled shifted plane partitions 
are specializations
of Theorems~\ref{th:Skew_Double_Shifted_PP}
and 
\ref{th:Skew_Double_Shifted_PP_Asy} for skew doubled shifted plane partitions
when taking the profile $\delta=(-1)^{m-1}$.

\begin{proof}[Proof of Theorems \ref{th:gfRPP} and \ref{th:AsymRPP}]
Take $\delta=(-1)^{m-1}$. Then we have
$$
W_1(\delta)=\{1,2,3,\ldots, m\},\quad W_2(\delta)=\{i+j: 1\leq i<j \leq m-1  \}.
$$
Therefore Theorem
\ref{th:Skew_Double_Shifted_PP} implies Theorem \ref{th:gfRPP}.
Since $\Gamma(z)\Gamma(1-z)=\frac{\pi}{\sin(z\pi)}$ and $\prod_{j=1}^{m-1}\sin(\frac{j\pi}{m})=\frac{m}{2^{m-1}}$, we have
$$\prod_{t\in W_1(\delta)} \Gamma(\frac{t}{m})
	=  
\sqrt{\frac{\pi^{m-1}}{\prod_{j=1}^{m-1}\sin(\frac{j\pi}{m})}}
= \sqrt{\frac{(2\pi)^{m-1}}{m}}
$$
and
$$\prod_{t\in W_2(\delta)} \Gamma(\frac{t}{2m})
=
	\pi^{(m-1)(m-2)/4}
	\left(\prod_{i=1}^{m-2}\,\prod_{j=i+1}^{m-i-1} {{\sin\left(\frac{i+j}{2m}\pi\right)}} \right)^{-1}.
$$
Since $\delta=(-1)^{m-1}$,
by Theorem \ref{th:Skew_Double_Shifted_PP_Asy}  we verify that
\begin{align*}
C_1(\delta)&=
	2^{-\frac{\epsilon(\delta)}{m} 
  -|\delta|_1}\times
	\prod_{t\in W_1(\delta)} \Gamma(\frac{t}{m})\prod_{t\in W_2(\delta)} \Gamma(\frac{t}{2m})
\\
	&= \frac { \pi^{(m^2-m)/4} }{\sqrt {m}}
	\left(\prod_{i=1}^{m-2}\,\prod_{j=i+1}^{m-i-1} {{\sin\left(\frac{i+j}{2m}\pi\right)}} \right)^{-1},
\end{align*}
	and that $C_1(\delta)C_2(m)$ is equal to $C(m)$ given in Theorem \ref{th:AsymRPP}. 
\end{proof}

For example, consider the three skew doubled shifted plane partitions (DSPPa)-(DSPPc) given in Fig. 2. Their profiles,
generating functions  and asymptotic formulas are respectively:

(a) Fig. 2, case DSPPa. $\delta=(1,1)$, $W_1(\delta)=\{3,2,1\}$, $W_2(\delta)=\{3\}$,
\begin{align*} 
	\sum_{\omega \in \PP{DSPPa}} z^{|\omega|} 
	&\ =\  \prod_{k\geq 0} \frac{1}{(1-z^{k+1})(1-z^{6k+3})},\\
\PP{DSPPa}(n) &\ \sim	 \ 
	\frac{\sqrt{7}}{24}\,\frac {\exp(\pi \frac{\sqrt {7n}}{3} )}{ n}.\label{eq:asym3a}
\end{align*}

(b) Fig. 2, case DSPPb.  $\delta=(1,-1)$,  $W_1(\delta)=\{3,2,2\}$, $W_2(\delta)=\{1\}$, 
\begin{align*}
\sum_{\omega \in \PP{DSPPb}} z^{|\omega|} 
	&\ =\ \prod_{k\geq 0}\frac{1}{(1-z^{3k+3})(1-z^{3k+2})^2(1-z^{6k+1})},\\
\PP{DSPPb}(n) 
	&\ \sim	 \ 
 \sqrt 2 {\alpha} \times {\frac {\sqrt 7}{24}}\,\frac {\exp(\pi \frac{\sqrt {7n}}{3} )}{ n},
\end{align*}
where
$$
\alpha= 2^{-\frac {11}{6}} \sqrt{3} \pi^{-\frac 32} \Gamma(\frac 23)^2 \Gamma(\frac 16)
=0.8908\cdots
$$

(c) Fig. 2, case DSPPc.  $\delta=(-1,1)$, $W_1(\delta)=\{3,1,1\}$, $W_2(\delta)=\{5\}$,
\begin{align*}
\sum_{\omega \in \PP{DSPPc}} z^{|\omega|} 
	&\ = \ \prod_{k\geq 0}\frac{1}{(1-z^{3k+3})(1-z^{3k+1})^2(1-z^{6k+5})},\\
\PP{DSPPc}(n) &\ \sim	 \ 
 \sqrt 2 {\alpha^{-1}} \times {\frac {\sqrt 7}{24}}\,\frac {\exp(\pi \frac{\sqrt {7n}}{3} )}{ n}.
\end{align*}


\section{Formulas for symmetric cylindric partitions} \label{sec:gfSyCPP}
Cylindric partitions were introduced by Gessel and Krattenthaler~\cite{GesselKratt1997}. They obtained the generating function for cylindric partitions of some given shape that satisfy certain row bounds as some summation of determinants related to $q$-binomial coefficients. Later, Borodin gave an equivalent definition 
\cite{Borodin2007} and
obtained the generating function for cylindric partitions.
A \emph{cylindric partition $(\PP{CP})$} with profile $\delta$ is a \PP{DSPP} $$\omega=(\la^0,\la^1,\ldots,\la^{h-1},\la^h)$$ with profile $\delta$ such that $\la^0=\la^h$. The {\it size} of such partition is defined by $|\omega|=\sum_{i=0}^{h-1}|\la^i|$  (notice that $\la^h$ is not counted here).
The following generating function for cylindric partitions, first proved by Borodin \cite{Borodin2007}, later by Tingley \cite{Tingley2008} and Langer~\cite{Langer2013B}, can be obtained by letting
$u_i=z\ (0\leq i \leq h-1)$ and $u_{h}=1$  in 
\eqref{eq:MainUCylinder}.

\begin{thm}[Borodin\cite{Borodin2007}]\label{th:Borodin}
Let $\delta=(\delta_i)_{1\leq i\leq h}$ be a profile. 
Then the generating function for the cylindric partitions with profile $\delta$ is
\begin{equation*}
\sum_{\omega \in \PP{CP}_\delta}  z^{|\omega|}
=  
	\prod_{k\geq 0}  
	\prod_{t\in W_3(\delta)} \frac{1}{1-z^{hk+t}},
\end{equation*}
where $W_3(\delta)$ is the following multiset
\begin{align*}
	W_3(\delta)&:=\{h\} \cup \{ j-i : i < j,\ \delta_i > \delta_j\} \cup \{ h+i-j: i < j,\ \delta_i < \delta_j \}. 
\end{align*}
\end{thm}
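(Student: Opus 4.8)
The plan is to specialize the cylindric summation formula \eqref{eq:MainUCylinder} of Theorem \ref{th:MainU} exactly as indicated, then simplify the resulting infinite product. By definition, a cylindric partition with profile $\delta$ is a skew doubled shifted plane partition $\omega=(\la^0,\ldots,\la^h)$ with $\la^0=\la^h$, whose size counts $|\la^0|+\cdots+|\la^{h-1}|$ but not $|\la^h|$. So the natural choice is to set $u_i=z$ for $0\leq i\leq h-1$ and $u_h=1$ in \eqref{eq:MainUCylinder}. With this choice $U_j=u_0\cdots u_{j-1}=z^{j}$ for $1\leq j\leq h$, while $U_{h+1}=u_0\cdots u_h=z^{h}$, and the weight $\prod_{i=0}^h u_i^{|\la^i|}$ becomes $z^{|\la^0|+\cdots+|\la^{h-1}|}=z^{|\omega|}$ precisely because $u_h=1$ kills the $|\la^h|$ contribution. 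Hence the left-hand side of \eqref{eq:MainUCylinder} becomes $\sum_{\omega\in\PP{CP}_\delta} z^{|\omega|}$, as desired.

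Next I would evaluate the right-hand side of \eqref{eq:MainUCylinder} under this substitution. The prefactor $\prod_{1\le i<j\le h,\ \delta_i>\delta_j}\frac{1}{1-U_i^{-1}U_j}$ becomes $\prod_{i<j,\ \delta_i>\delta_j}\frac{1}{1-z^{j-i}}$, which accounts for the terms $j-i$ with $\delta_i>\delta_j$ in $W_3(\delta)$ at $k=0$. For the product over $k\ge 1$: the term $\frac{1}{1-U_{h+1}^k}=\frac{1}{1-z^{hk}}$ supplies the element $h\in W_3(\delta)$ (with $mk$ shifted to $hk$, so actually it contributes $z^{hk}$ for $k\ge 1$, matching $t=h$, $k\ge 1$ in the statement). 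The factor $\Psi(\{U_iU_{h+1}^k:\delta_i=-1\},\{U_j^{-1}:\delta_j=1\})=\prod_{\delta_i=-1,\ \delta_j=1}\frac{1}{1-z^{i}z^{hk}z^{-j}}=\prod_{\delta_i=-1,\ \delta_j=1}\frac{1}{1-z^{hk+i-j}}$. Here one must be careful with signs of the exponent $i-j$: when $i<j$ one has $\delta_i=-1<1=\delta_j$, giving exponent $hk+i-j$ with $0<i-j+h$ (since $|i-j|<h$), i.e. the elements $h+i-j$ with $\delta_i<\delta_j$; when $i>j$ one has $\delta_i=-1$ and $\delta_j=1$ with $i>j$, i.e. $\delta_j>\delta_i$ in the $j<i$ ordering, contributing exponent $hk+(i-j)$ which, after relabeling, merges with the $k\ge 1$ copies of $j-i$ with $\delta>\delta$. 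Collecting all contributions across $k\ge 0$ and reconciling the index bookkeeping yields exactly $\prod_{k\ge 0}\prod_{t\in W_3(\delta)}\frac{1}{1-z^{hk+t}}$ with $W_3(\delta)=\{h\}\cup\{j-i:i<j,\ \delta_i>\delta_j\}\cup\{h+i-j:i<j,\ \delta_i<\delta_j\}$.

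The main obstacle I anticipate is the careful reindexing of the $\Psi$-product: the two sets $\{j-i:\delta_i>\delta_j\}$ and $\{h+i-j:\delta_i<\delta_j\}$ both arise partly from the $k=0$ prefactor and partly from the $k\ge 1$ portion, and one must verify that each pair $(i,j)$ with $\delta_i\ne\delta_j$ contributes to exactly one of the two sets, with the right exponent sign, and that the union over $k\ge0$ telescopes correctly into the stated double product. This is the standard "absorb the prefactor into the $k=0$ slice" maneuver already used implicitly in Theorem \ref{th:Skew_Double_Shifted_PP}, so I would model the argument on that computation. Everything else is a direct substitution, so once the multiset bookkeeping is pinned down the theorem follows; this is why the statement is attributed to Borodin and merely re-derived here as a corollary of \eqref{eq:MainUCylinder}.
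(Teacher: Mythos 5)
Your proof is correct and follows exactly the route the paper indicates: specializing \eqref{eq:MainUCylinder} with $u_i=z$ for $0\leq i\leq h-1$ and $u_h=1$, then absorbing the prefactor into the $k=0$ slice of the product; your case analysis of the $\Psi$-factor (splitting pairs with $\delta_i=-1$, $\delta_j=1$ according to whether $i<j$ or $i>j$) correctly recovers the multiset $W_3(\delta)$. The only blemish is the stray reference to ``$mk$'' (a leftover from the DSPP computation), which should read $hk$.
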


A \emph{symmetric cylindric partition }$(\PP{SCP})$ with profile $\delta=(\delta_1, \delta_2, \ldots,  \delta_h)$  is a \PP{DSPP}
$$
\omega=(\la^h,\la^{h-1},\ldots,\la^{1},\la^0,\la^{1},\ldots,\la^{h-1},\la^h)
$$
with profile $(-\delta_h, -\delta_{h-1}, \ldots, -\delta_{2}, -\delta_1, \delta_1, \delta_{2}, \ldots,\delta_{h-1}, \delta_h)$.
Notice that a symmetric cylindric partition is always a cylindric partition, and
when $\la^h=\emptyset$, the \PP{SCP} $\omega$ becomes an \PP{SyPP}.
The size of the symmetric cylindric partition $\omega$ is defined by 
$$|\omega|=|\la^0|+2\sum_{i=1}^h |\la^h|.$$

\begin{thm}\label{th:symmetric_cylinder_plane_partitions}
 The generating function for symmetric cylindric partitions with profile $\delta$ is 
\begin{align} \label{eq:symmetric_cylinder_plane_partitions}
	&\sum_{\omega \in \PP{SCP}_\delta} z^{|\omega|} 
=\prod_{\substack{i < j \\ \delta_i > \delta_j}} \frac{1}{1-z^{2(j-i)}}
\Phi(\{ z^{2i-1}:  \delta_i=-1  \}) 
\\&
\qquad \qquad \times
\prod_{k\geq 1} \frac{\Phi(z^{(2h+1)k}(\{ z^{-2i+1}:  \delta_i=1  \}+\{ z^{2i-1}:  \delta_i=-1  \}))}{1-z^{(2h+1)k}}.\nonumber
\end{align}
\end{thm}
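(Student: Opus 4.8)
The plan is to deduce Theorem~\ref{th:symmetric_cylinder_plane_partitions} from the trace generating function \eqref{eq:MainUComplete} of Theorem~\ref{th:MainU} by means of a ``folding'' identification of symmetric cylindric partitions with ordinary (open) skew doubled shifted plane partitions carrying a non‑uniform set of parameters $u_i$.

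First I would set up the folding bijection. A symmetric cylindric partition $\omega=(\la^h,\ldots,\la^1,\la^0,\la^1,\ldots,\la^h)$ with profile $\delta=(\delta_1,\ldots,\delta_h)$ is completely determined by its right half $(\la^0,\la^1,\ldots,\la^h)$, and conversely any $(\la^0,\ldots,\la^h)\in\PP{DSPP}_\delta$ produces a valid symmetric cylindric partition: the horizontal‑strip relations imposed on the left half by the profile $(-\delta_h,\ldots,-\delta_1)$ are, pair by pair, the same relations as those imposed on the right half by $\delta$ (because $\la\succ\mu$ means exactly $\mu\prec\la$), the two relations meeting at the central partition $\la^0$ coincide with the one relating $\la^0$ and $\la^1$ in the $\PP{DSPP}_\delta$, and the cylindric closure $\nu^0=\nu^{2h}$ is automatic since both sides equal $\la^h$. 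Under this bijection $z^{|\omega|}=z^{|\la^0|}\prod_{i=1}^{h}z^{2|\la^i|}=\prod_{i=0}^{h}u_i^{|\la^i|}$ with $u_0=z$ and $u_i=z^2$ for $1\le i\le h$, so that $\sum_{\omega\in\PP{SCP}_\delta}z^{|\omega|}=\sum_{\omega\in\PP{DSPP}_\delta}\prod_{i=0}^h u_i^{|\la^i|}$, and the latter is exactly the left‑hand side of \eqref{eq:MainUComplete} for this choice of parameters.

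Second I would specialize \eqref{eq:MainUComplete} at $u_0=z$, $u_i=z^2$. With $U_j=u_0u_1\cdots u_{j-1}$ one gets $U_j=z^{2j-1}$ for $j\ge1$, hence $U_{h+1}=z^{2h+1}$ and $U_i^{-1}U_j=z^{2(j-i)}$. The first factor of \eqref{eq:MainUComplete} then becomes $\prod_{i<j,\ \delta_i>\delta_j}(1-z^{2(j-i)})^{-1}$; the factor $\Phi(\{U_i:\delta_i=-1\})$ becomes $\Phi(\{z^{2i-1}:\delta_i=-1\})$; and since $U_i^{-\delta_i}$ equals $z^{-2i+1}$ when $\delta_i=1$ and $z^{2i-1}$ when $\delta_i=-1$, the multiset $\{U_i^{-\delta_i}U_{h+1}^k:1\le i\le h\}$ equals $z^{(2h+1)k}\bigl(\{z^{-2i+1}:\delta_i=1\}+\{z^{2i-1}:\delta_i=-1\}\bigr)$, while $1-U_{h+1}^k=1-z^{(2h+1)k}$. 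Assembling these three simplifications reproduces the right‑hand side of \eqref{eq:symmetric_cylinder_plane_partitions} verbatim.

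I expect the computation in the second step to be entirely mechanical; the only point that really needs care is the first step, namely checking that folding genuinely lands in the skew doubled shifted plane partitions of the doubled region $\Delta(-\delta_h,\ldots,-\delta_1,\delta_1,\ldots,\delta_h)$ and that no extra condition is hidden along the fold line, so that $\sum_{\omega\in\PP{SCP}_\delta}z^{|\omega|}$ is truly the reweighted \emph{full} $\PP{DSPP}_\delta$ sum (governed by the complete formula) rather than a cylindrically constrained sum. The argument uses nothing about $\delta$ beyond its being a profile, so it covers every profile, and the sub‑family with $\la^h=\emptyset$ recovers the classical generating function for symmetric plane partitions as a consistency check.
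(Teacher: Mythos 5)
Your proof is correct, and its first step --- folding a symmetric cylindric partition onto its right half $(\la^0,\ldots,\la^h)\in\PP{DSPP}_\delta$ with the central diagonal weighted by $z$ and all others by $z^2$ --- is exactly the identification underlying the paper's argument; your specialization $U_j=z^{2j-1}$, $U_{h+1}=z^{2h+1}$, $U_i^{-1}U_j=z^{2(j-i)}$ is also verified correctly and does reproduce \eqref{eq:symmetric_cylinder_plane_partitions} verbatim. The only difference is one of packaging: the paper invokes the \emph{open} formula \eqref{eq:MainUOpen} with the uniform choice $u_i=z^2$, inserts the correction $z^{-|\la^0|}$ by hand, rescales the alphabets $\{z^{2i}\}$, $\{z^{-2i}\}$ to $\{z^{2i-1}\}$, $\{z^{-2i+1}\}$ via homogeneity of skew Schur functions to absorb that correction, and then finishes with Lemma \ref{th:SchurSum:s2}, whereas you plug the non-uniform parameters $u_0=z$, $u_i=z^2$ $(1\le i\le h)$ directly into the \emph{complete} formula \eqref{eq:MainUComplete}. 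Since \eqref{eq:MainUComplete} is by construction the open formula composed with Lemma \ref{th:SchurSum:s2}, the two computations are the same; your single invocation with skew parameters is a slightly more streamlined route that avoids the homogeneity rescaling step.
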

\begin{proof}
By \eqref{eq:MainUOpen}, we have
\begin{align*}
\sum_{\omega \in \PP{SCP}_\delta} z^{|\omega|} 
&=\sum_{\la^0,\la^h}z^{-|\la^0|}
\sum_{\omega' \in \PP{DSPP}_\delta(\la^0, \la^h)}  z^{2|\omega'|}
\\&=
\prod_{\substack{i < j \\ \delta_i > \delta_j}} \frac{1}{1-z^{2(j-i)}}\ 
\\&\times
\sum_{ \la,\mu}z^{2(h+1)|\mu|-|\la|}  \sum_{\gamma\subset \la,\mu}s_{\la/\gamma}(\{ z^{2i}:  \delta_i=-1  \})s_{\mu/\gamma}(\{ z^{-2i}:  \delta_i=1  \})
\\&
=\prod_{\substack{i < j \\ \delta_i > \delta_j}} \frac{1}{1-z^{2(j-i)}}\ 
\\&\times
\sum_{ \la,\mu}z^{(2h+1)|\mu|}  \sum_{\gamma\subset \la,\mu}s_{\la/\gamma}(\{ z^{2i-1}:  \delta_i=-1  \})s_{\mu/\gamma}(\{ z^{-2i+1}:  \delta_i=1  \}).
\end{align*}
By Lemma \ref{th:SchurSum:s2}, this is equal to the right hand side of \eqref{eq:symmetric_cylinder_plane_partitions}.
\end{proof}
The right-hand side of the above identity can be further simplified. 
For each profile $\delta=(\delta_i)_{1\leq i\leq m-1}$, 
let $W_4(\delta)$ and $W_5(\delta)$ be the following multisets:
\begin{align*}
	W_4(\delta)&=\{2m-1\} \cup \{ 2i-1 \mid \delta_i=-1\} \cup \{ 2m-2i\mid \delta_{i}=1 \}; \\
	W_5(\delta) &=\{ 
	2i+2j-2\quad \mid 1\leq i<j \leq m-1,\ \delta_i=\delta_{j}=-1 \}   \\
	&\cup \{ 4m-2i-2j\ \mid 1\leq i<j \leq m-1,\ \delta_i=\delta_{j}=1 \}  \\
	&\cup \{ 2(2m-1)+2i-2j\mid 1\leq i<j \leq m-1,\ \delta_i<\delta_{j} \}  \\
  &\cup \{ 2j-2i \mid  1\leq i<j \leq m-1,\ \delta_i>\delta_{j} \}. 
\end{align*}
Then we obtain the following result.
\begin{thm}\label{th:symmetric_cylinder_PP}
The generating function for 
symmetric cylindric partitions with profile $\delta=(\delta_i)_{1\leq i\leq m-1}$
is
\begin{equation*}
\sum_{\omega \in \PP{SCP}_\delta}  z^{|\omega|}
=  
	\prod_{k\geq 0}  
	\left(\prod_{t\in W_4(\delta)} \frac{1}{1-z^{(2m-1)k+t}}\right) 
	\left(\prod_{t\in W_5(\delta)} \frac{1}{1-z^{2(2m-1)k+t}}\right).
\end{equation*}
\end{thm}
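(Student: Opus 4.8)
The plan is to derive Theorem~\ref{th:symmetric_cylinder_PP} from Theorem~\ref{th:symmetric_cylinder_plane_partitions} by expanding the three structural pieces on its right-hand side into explicit products of the form $\prod(1-z^{N k + t})^{-1}$ and then matching the resulting exponents against the multisets $W_4(\delta)$ and $W_5(\delta)$. Recall that $\delta=(\delta_i)_{1\le i\le m-1}$, so $h=m-1$ and $2h+1=2m-1$; this is the modulus that will appear. First I would unpack the factor $\prod_{\delta_i>\delta_j,\ i<j}(1-z^{2(j-i)})^{-1}$: it contributes, with $k=0$, exactly the elements $2j-2i$ of $W_5(\delta)$ coming from inversions $\delta_i>\delta_j$. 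Second I would expand $\Phi(\{z^{2i-1}:\delta_i=-1\})$ using the definition $\Phi(X)=\prod_i(1-x_i)^{-1}\prod_{i<j}(1-x_ix_j)^{-1}$: the linear part gives the elements $2i-1$ with $\delta_i=-1$ (part of $W_4$ at $k=0$), and the quadratic part gives $(2i-1)+(2j-1)=2i+2j-2$ for $i<j$ with $\delta_i=\delta_j=-1$ (part of $W_5$).

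Third, and this is the bulk of the bookkeeping, I would expand the product over $k\ge1$ of $\Phi(z^{(2m-1)k}(\{z^{-2i+1}:\delta_i=1\}+\{z^{2i-1}:\delta_i=-1\}))/(1-z^{(2m-1)k})$. The explicit $1/(1-z^{(2m-1)k})$ factors, together with the $k=0$ term hidden in $W_4$, assemble into $\prod_{k\ge0}(1-z^{(2m-1)k+(2m-1)})^{-1}$, accounting for the element $2m-1\in W_4$. The linear part of $\Phi$ applied to the shifted alphabet gives factors $(1-z^{(2m-1)k + (2m-2i)})^{-1}$ for $\delta_i=1$ (the remaining elements $2m-2i$ of $W_4$) and $(1-z^{(2m-1)k+(2i-1)})^{-1}$ for $\delta_i=-1$ with $k\ge1$, which combine with the $k=0$ contributions from step two to give the full $\prod_{k\ge0}$ over those $W_4$ elements. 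The quadratic part of $\Phi$ produces, for $k\ge1$, the pair-sums: $(-2i+1)+(-2j+1)+2\cdot(2m-1)k$, rewritten as $2(2m-1)k + (4m-2-2i-2j)$ wait --- one must be careful: with the convention $i<j$ one gets $4m-2i-2j$ after absorbing, matching the $\delta_i=\delta_j=1$ part of $W_5$; the cross terms $(2i-1)+(-2j+1) = 2i-2j$ shifted by $2(2m-1)k$ handle $\delta_i<\delta_j$ and $\delta_i>\delta_j$ separately, noting that $2i-2j<0$ for $i<j$ so the $\delta_i<\delta_j$ cross terms must be paired with $k\ge1$ and rewritten as $2(2m-1)(k-1) + 2(2m-1)+2i-2j$, giving exponent base $2(2m-1)$ and residue $2(2m-1)+2i-2j\in W_5$; and the $\delta_i=\delta_j=-1$ pair-sums $2i+2j-2$ for $k\ge1$ combine with the $k=0$ contributions from step two.

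The main obstacle I expect is precisely the careful reindexing of the $k$-shifts so that each residue class lands in a clean $\prod_{k\ge0}(1-z^{Nk+t})^{-1}$ with $t$ in the stated range: negative "residues" like $2i-2j$ (from cross terms with $\delta_i<\delta_j$) and like $2m-2i$ possibly exceeding the modulus must be normalized by borrowing or lending one unit of $N$ between consecutive $k$, and one has to verify that after this normalization the $k=0$ term is present for every element of $W_4\cup W_5$ — equivalently, that no boundary term is lost or double-counted. A useful sanity check, which I would include, is to compare cardinalities and exponent sums: $\#W_4(\delta)=m$ and $\#W_5(\delta)=\binom{m-1}{2}$, exactly mirroring $\#W_1(\delta)=m$, $\#W_2(\delta)=\binom{m-1}{2}$ in the proof of Theorem~\ref{th:Skew_Double_Shifted_PP_Asy}; indeed $W_4$ and $W_5$ are obtained from $W_1$ and $W_2$ (with $\delta$ of length $m-1$) by the substitution doubling-and-shifting that replaces the modulus $m$ by $2m-1$, reflecting the passage from $|\omega'|$ to $2|\omega'|-|\la^0|$ in the proof of Theorem~\ref{th:symmetric_cylinder_plane_partitions}. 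Once the exponent matching is done termwise, the theorem follows immediately by multiplying the three expanded pieces together.
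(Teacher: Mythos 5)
Your proposal is correct and follows essentially the same route as the paper, which obtains Theorem~\ref{th:symmetric_cylinder_PP} by simplifying the right-hand side of Theorem~\ref{th:symmetric_cylinder_plane_partitions}: expanding the two $\Phi$-factors and the inversion product, and reindexing each exponent $(2m-1)k+t$ or $2(2m-1)k+t$ (absorbing one unit of the modulus where the residue would be negative or where the $k=0$ term comes from a different piece) so that every element of $W_4(\delta)\cup W_5(\delta)$ appears for all $k\ge 0$ exactly once. Your self-correction in the $\delta_i=\delta_j=1$ case does land on the right residue $4m-2i-2j$, and the cross-term bookkeeping for $\delta_i<\delta_j$ versus $\delta_i>\delta_j$ is exactly the intended argument.
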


By the definitions of $W_4$ and $W_5$, it is easy to verify that
$|W_4(\delta)|=m$ and $|W_5(\delta)|=\binom{m-1}{2}$. Hence,
\begin{align}
\sum_{t \in W_4(\delta)} \frac{1}{2m-1}+ \sum_{t \in W_5(\delta)} \frac{1}{2(2m-1)}=\frac{m^2+m+2}{4(2m-1)}.
\end{align}
By Theorems \ref{th:Asy_Ribbon:multi} and \ref{th:symmetric_cylinder_PP} we obtain the following asymptotic formula for the number of \PP{SCP} with size $n$.
\begin{thm}
Let $m\geq 2$ be a positive integer and
$\delta=(\delta_j)_{1\leq j\leq m-1}$ be a profile of length $m-1$.
	Let $\PP{SCP}_\delta(n)$ denote the number of 
	symmetric cylindric partitions  with profile $\delta$ and size $n$. Then
\begin{align*}
\PP{SCP}_\delta(n)  \sim v \sqrt{\frac{1}{8\pi}} \frac{r^{b+1/4}}{n^{b+3/4}} \exp({\sqrt{nr}}), 
\end{align*}
where $r,b,v$ are given below:
\begin{align*}
r &= {\frac{(m^2+m+2)\pi^2}{6(2m-1)}},\\
b&= \sum_{t\in W_4(\delta) }(\frac{t}{2(2m-1)} - \frac 14)
  + \sum_{t\in W_5(\delta) } (\frac{t}{4(2m-1)} - \frac 14),\\
v
%
%
	&= 
	2^{-\frac{1}{2m-1}\sum_{t\in W_4(\delta)}t-\frac{1}{4}(m^2-3m+2)}
	\pi^{ -\frac{1}{4} (m^2-m+2) } (2m-1)^{2b}\\
	& \qquad \times \prod_{t\in W_4(\delta)} \Gamma(\frac{t}{2m-1})
	\prod_{t\in W_5(\delta)} \Gamma(\frac{t}{2(2m-1)}).
\end{align*}
\end{thm}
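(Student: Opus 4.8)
The plan is to obtain the asymptotic formula by combining the exact generating function of Theorem~\ref{th:symmetric_cylinder_PP} with the general asymptotic machinery of Theorem~\ref{th:Asy_Ribbon:multi}. First I would put the product from Theorem~\ref{th:symmetric_cylinder_PP} into the precise shape required by Theorem~\ref{th:Asy_Ribbon:multi}, namely $\prod_i \prod_{k\geq 0}(1-q^{x_ik+y_i})^{-1}$: the $W_4(\delta)$-part contributes $m$ factors with $x_i=2m-1$ and $y_i=t$ ranging over the multiset $W_4(\delta)$, and the $W_5(\delta)$-part contributes $\binom{m-1}{2}$ factors with $x_i=2(2m-1)$ and $y_i=t$ ranging over $W_5(\delta)$. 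One must check that $\gcd$ of all the $x_i$ and $y_i$ together is $1$; since $1\in W_4(\delta)$ always (taking $\delta_i=-1$ gives $2\cdot1-1=1$, or if there is no such $i$ then $\delta=1^{m-1}$ and $2m-2\cdot1=2m-2$ together with $2m-1$ are coprime, and in general $2m-1$ and $2m$ are coprime), the coprimality hypothesis holds, so Theorem~\ref{th:Asy_Ribbon:multi} applies directly.

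Next I would read off the three constants $v$, $r$, $b$ from the formulas in Theorem~\ref{th:Asy_Ribbon:multi}. For $r$ this gives
\[
r=\sum_{t\in W_4(\delta)}\frac{2\pi^2}{3(2m-1)}+\sum_{t\in W_5(\delta)}\frac{2\pi^2}{3\cdot 2(2m-1)}
=\frac{2\pi^2}{3(2m-1)}\Bigl(|W_4(\delta)|+\tfrac12|W_5(\delta)|\Bigr),
\]
and since $|W_4(\delta)|=m$ and $|W_5(\delta)|=\binom{m-1}{2}$ this equals $\frac{(m^2+m+2)\pi^2}{6(2m-1)}$, exactly as in the displayed identity for $\sum 1/(2m-1)+\sum 1/(2(2m-1))$. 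For $b$ one simply copies $b=\sum_{t\in W_4}(\frac{t}{2(2m-1)}-\frac14)+\sum_{t\in W_5}(\frac{t}{4(2m-1)}-\frac14)$, which is the stated value; unlike the \PP{DSPP} case there is no reason for this to collapse to a profile-independent constant, which is consistent with the paper's earlier remark that the order of the \PP{SCP} asymptotics depends on the profile. For $v$, the product $\prod_i \frac{\Gamma(y_i/x_i)}{\sqrt{x_i\pi}}(\frac{x_i}{2})^{y_i/x_i}$ splits: the $\Gamma$-factors give $\prod_{t\in W_4}\Gamma(\frac{t}{2m-1})\prod_{t\in W_5}\Gamma(\frac{t}{2(2m-1)})$ directly; the $\sqrt{x_i\pi}$ denominators give $(\sqrt{(2m-1)\pi})^{-m}(\sqrt{2(2m-1)\pi})^{-\binom{m-1}{2}}$; and the powers $(\frac{x_i}{2})^{y_i/x_i}$ give $(\frac{2m-1}{2})^{\sum_{t\in W_4}t/(2m-1)}(2m-1)^{\sum_{t\in W_5}t/(2(2m-1))}$. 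Collecting the powers of $2$, of $\pi$, and of $(2m-1)$ separately, and using $m+\binom{m-1}{2}=\binom{m}{2}+1=\frac{m^2-m+2}{2}$ to handle the $\pi$-exponent, together with the definition of $b$ to fold the residual $(2m-1)$-powers into $(2m-1)^{2b}$, yields the stated closed form for $v$. Finally the prefactor $\frac{1}{2\sqrt{2\pi}}$ of Theorem~\ref{th:Asy_Ribbon:multi} is rewritten as $\sqrt{\frac{1}{8\pi}}$, giving the claimed shape $v\sqrt{1/(8\pi)}\,r^{b+1/4}n^{-b-3/4}\exp(\sqrt{nr})$.

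The only genuinely delicate point, and the step I expect to absorb most of the effort, is the bookkeeping that reorganizes the raw output of Theorem~\ref{th:Asy_Ribbon:multi} into the neatly grouped expression for $v$: one has to track the exponent of $2$ (which combines $-\sum_{t\in W_4}t/(2m-1)$ from the $(\frac{x_i}{2})^{y_i/x_i}$ terms with $-\frac12 m-\frac12\binom{m-1}{2}\cdot 2 = -\frac12(m^2-3m+2)$-type contributions from the $\sqrt{2}$ in the $W_5$ denominators, after simplifying $m+\binom{m-1}{2}$ against $\binom{m-1}{2}$), the exponent of $\pi$, and the exponent of $2m-1$, and verify they match $-\frac{1}{2m-1}\sum_{t\in W_4}t-\frac14(m^2-3m+2)$, $-\frac14(m^2-m+2)$, and $2b$ respectively. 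This is entirely routine exponent arithmetic, so no conceptual obstacle arises; the result then follows.
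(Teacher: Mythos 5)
Your proposal is correct and follows exactly the paper's route: the paper likewise derives this theorem by feeding the product formula of Theorem~\ref{th:symmetric_cylinder_PP} into Theorem~\ref{th:Asy_Ribbon:multi} with $x_i=2m-1$ on $W_4(\delta)$ and $x_i=2(2m-1)$ on $W_5(\delta)$, using $|W_4(\delta)|=m$ and $|W_5(\delta)|=\binom{m-1}{2}$ to get $r$, and your exponent bookkeeping for $v$ (powers of $2$, $\pi$, and $2m-1$, the last folding into $(2m-1)^{2b}$) checks out. Your explicit verification of the $\gcd$ hypothesis (via $2m-1\in W_4(\delta)$ together with $1$ or $2m-2$ in $W_4(\delta)$ according to the sign of $\delta_1$) is a detail the paper omits.
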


{\it Remark}. The coefficient $r$ depends only on the width of the symmetric cylindric partitions, not on the profile itself, while the coefficient $b$ depends on the profile.
It is interesting to compare these phenomena with the asymptotic formula for 
cylindric partitions \cite{HanXiong2017}. There, the coefficient $b$ depends only on the width, and $r$ depends on the profile. 

\medskip

For example, consider the three symmetric cylindric partitions (SCPa)-(SCPc) given in Fig. 2. Their profiles,
generating functions  and asymptotic formulas are respectively:


(a) Fig. 2, case SCPa. $\delta=  (-1, -1)$. 
$W_4(\delta)=\{1,3,5\}$ and $W_5(\delta)=\{4\}$. 
$$
\sum_{\omega \in \PP{SCPa}}  z^{|\omega|}
=
\prod_{k\geq 0}\frac{1}{(1-z^{5k+1})(1-z^{5k+3})(1-z^{5k+5})(1-z^{10k+4})},
$$
\begin{align*}
	\PP{SCPa}(n)\sim
	\Gamma(\frac15)\Gamma(\frac25)\Gamma(\frac35)2^{-\frac{19}{5}}
	5^{-\frac{3}{20}}\pi^{-\frac{9}{5}}(\frac73)^{\frac{7}{20}}
	\times  \frac{1}{n^{{17}/{20}}} 
	\exp\Bigl(\pi\sqrt{\frac{7n}{15}} \, \Bigr).
\end{align*}

\smallskip

(b) Fig. 2, case SCPb. $\delta=  (1, -1)$. 
$W_4(\delta)=\{3,4,5\}$ and $W_5(\delta)=\{2\}$. 
$$
\sum_{\omega \in \PP{SCPb}}  z^{|\omega|}
=
\prod_{k\geq 0}\frac{1}{(1-z^{5k+3})(1-z^{5k+4})(1-z^{5k+5})(1-z^{10k+2})},
$$
\begin{align*}
	\PP{SCPb}(n)\sim
\Gamma(\frac15)\Gamma(\frac35)\Gamma(\frac45)
	2^{-\frac{22}{5}}5^{\frac {1}{20}}\pi^{-\frac{7}{5}}(\frac73)^{\frac{11}{20}}
	\times  \frac{1}{n^{{21}/{20}}} 
	\exp\Bigl(\pi\sqrt{\frac{7n}{15}} \, \Bigr).
\end{align*}

\smallskip

(c) Fig. 2, case SCPc. $\delta=  (1,1)$. 
$W_4(\delta)=\{2,4,5\}$ and $W_5(\delta)=\{6\}$. 
$$
\sum_{\omega \in \PP{SCPc}}  z^{|\omega|}
=
\prod_{k\geq 0}\frac{1}{(1-z^{5k+2})(1-z^{5k+4})(1-z^{5k+5})(1-z^{10k+6})},
$$
\begin{align*}
\PP{SCPc}(n)\sim
\Gamma(\frac25)\Gamma(\frac35)\Gamma(\frac45)
	2^{-\frac{21}{5}}5^{\frac{3}{20}}\pi^{-\frac{6}{5}}(\frac73)^{\frac{13}{20}}
\times  \frac{1}{n^{23/20}} 
	\exp\Bigl(\pi\sqrt{\frac{7n}{15}} \, \Bigr).
\end{align*}

%

\bibliographystyle{plain}
\bibliography{x10pp.bib}

\end{document}